\newtheorem{theorem}{Theorem}[section]
\newtheorem{corollary}[theorem]{Corollary}
\newtheorem{definition}[theorem]{Definition}
\newtheorem{lemma}[theorem]{Lemma}
\newtheorem{remark}[theorem]{Remark}
\newtheorem{proposition}[theorem]{Proposition}
\newtheorem{example}[theorem]{Example}
\numberwithin{theorem}{section}
\numberwithin{equation}{section}
\newcommand{\norm}[1]{\left\Vert#1\right\Vert}
\newcommand{\la}{\langle}
\newcommand{\ra}{\rangle}
\newcommand{\Comp}{\mathbb{C}}
\newcommand{\g}{\mathbb{G}}
\newcommand{\n}{\mathbb{N}}
\newcommand{\re}{\mathbb{R}}
\begin{document}

\title[Twisted Fourier transforms on non-Kac compact quantum groups]{Twisted Fourier transforms on non-Kac compact quantum groups}
% or if you want, simply \title{Title of the article}

%\author[Sang-Gyun Youn]{Sang-Gyun Youn}
%\address{Sang-Gyun Youn, 
%Department of Mathematics Education and NextQuantum, Seoul National University, 
%Gwanak-Ro 1, Gwanak-Gu, Seoul 08826, Republic of Korea}
%\email{s.youn@snu.ac.kr }

\maketitle

\begin{abstract}
We introduce an analytic family of twisted Fourier transforms $\left\{\mathcal{F}^{(x)}_p\right\}_{x\in \re,p\in [1,2)}$ for non-Kac compact quantum groups and establish a sharpened form of the Hausdorff-Young inequality in the range $0\leq x \leq 1$.  Furthermore, we prove that the range $0\leq x \leq 1$ is both necessary and sufficient for the boundedness of $\mathcal{F}^{(x)}_p$ under the assumption of sub-exponential growth on the dual discrete quantum group. We also show that the range of boundedness of $\mathcal{F}^{(x)}_p$ can be strictly extended beyond $[0,1]$ for certain non-Kac and non-coamenable free orthogonal quantum groups. As applications, we derive a stronger form of the twisted rapid decay property for polynomially growing non-Kac discrete quantum groups, including the duals of the Drinfeld-Jimbo $q$-deformations, and construct an explicit contractive, but non-completely bounded, representation of the convolution algebra of any non-Kac free orthogonal quantum group.
\end{abstract}

\section{Introduction}

Abstract harmonic analysis on nonabelian compact groups has a rich history, with a well-established generalization of the Fourier transform \cite{HeRo63,HeRo70,Rud90,Fol95,ReSt00}. Although several formulations exist, they are essentially equivalent from the viewpoint of harmonic analysis. This theory has since been extended into the broader framework of {\it noncommutative harmonic analysis}, which encompasses various models of quantum spaces. Notable examples include quantum tori \cite{Ric16,XXX16,XXY18}, quantum Euclidean spaces \cite{MSX20,GAJP21,JMPX21,HoLaWa23}, group von Neumann algebras \cite{JMP14,JMP18,PRd22,CCJP22}, and locally compact quantum groups \cite{LWW17,CaVo22,HWW24}.

While substantial progress has been made in settings where the associated noncommutative measure is {\it tracial}, we will focus on {\it non-Kac} compact quantum groups whose canonical noncommutative measure is {\it non-tracial}. This gives rise to fundamentally different analytic behavior compared to the tracial situation. For instance, recent studies have demonstrated that certain analytic results on non-Kac compact quantum groups are in sharp contrast with their classical counterparts \cite{Wa17,BVY21,You22,You24}. 

Motivated by such contrasts, this paper investigates foundational aspects of harmonic analysis, such as the Fourier transform and the Hausdorff–Young inequality, to understand the fundamental structural origins of such unexpected phenomena. Let us denote by $\mathcal{F}^{(0)}_p(f)=\widehat{f}$ the standard Fourier transform on a compact quantum group. 

In Section \ref{sec-twisted}, we introduce an analytic family of {\it twisted Fourier transforms} $\left\{\mathcal{F}^{(x)}_p\right\}_{x\in \re, p\in [1,\infty]}$ defined by
\begin{equation}
    \mathcal{F}^{(x)}_p(f)=Q^{-(\frac{1}{p}-\frac{1}{2})x}\widehat{f}Q^{(\frac{1}{p}-\frac{1}{2})x}
\end{equation}
where the modular operator $Q=(Q_{\alpha})_{\alpha\in \text{Irr}(\g)}$ is defined in Subsection \ref{sec-QG}. A central question addressed in this paper is whether the twisted Fourier transform $\mathcal{F}^{(x)}_p$ extends to a bounded map from $L^p(\g)$ to $\ell^{p'}(\widehat{\g})$ where $\widehat{\g}$ denotes the dual discrete quantum group of $\g$. The main result of this section is the establishment of the following {\it strong Hausdorff-Young inequality} (Theorem \ref{thm-main}) for all $1\leq p<2$:
\begin{align}
\label{eq01}&\sup_{0\leq x \leq 1} \norm{Q^{-(\frac{1}{p}-\frac{1}{2})x}\widehat{f}Q^{(\frac{1}{p}-\frac{1}{2})x}}_{\ell^{p'}(\widehat{\g})}\leq \norm{f}_{L^p(\g)}.
\end{align}
In contrast, for $p>2$, we show that the twisted Fourier transform $\mathcal{F}^{(x)}_p$ extends to a bounded map from $L^p(\g)$ to $\ell^{p'}(\widehat{\g})$ only when $\g$ is finite-dimensional (Proposition \ref{prop51}), similarly to the classical situation.

 In Section \ref{sec-RD}, we establish the following dual formulation of the strong Hausdorff-Young inequality (Proposition \ref{prop-duality}) for $1\leq p\leq 2$:
 \begin{equation}
\label{eq04}\norm{g}_{L^{p'}(\g)}\leq \inf_{ 0 \leq x \leq 1}\norm{Q^{(\frac{1}{p}-\frac{1}{2})x}\widehat{g}Q^{-(\frac{1}{p}-\frac{1}{2})x}}_{\ell^p(\widehat{\g})}.     
 \end{equation}
 In particular, we demonstrate that the inequality \eqref{eq04} for the case $p=1$ is a strengthened form of the so-called {\it twisted rapid decay property} under the assumption of polynomial growth on $\widehat{\g}$ (Corollary \ref{cor-strong-twisted-RD}). For example, we establish this strengthened form of the twisted rapid decay property for the duals of Drinfeld-Jimbo $q$-deformations.

In Section \ref{sec-optimality}, we investigate whether the range $0\leq x \leq 1$ is optimal for inequalities \eqref{eq01} and \eqref{eq04}. More precisely, we define
\begin{align}
    I(\g,p)&=\left\{x\in \re ~\Bigr|~ \mathcal{F}^{(x)}_p:L^p(\g)\rightarrow \ell^{p'}(\widehat{\g})\text{ is bounded}\right\}.
\end{align}
Note that \eqref{eq01} guarantees that $[0,1]\subseteq I(\g,p)$ for any compact quantum group $\g$ and $1\leq p<2$. Subsection \ref{subsec-coamenable} focuses on non-Kac and coamenable compact quantum groups, and shows that equality $[0,1]=I(\g,p)$ holds for all $1\leq p<2$ under the assumption of sub-exponential growth on $\widehat{\g}$ (Theorem \ref{thm50}). This class includes the Drinfeld-Jimbo $q$-deformations $G_q$. Subsection \ref{subsec-non-coamenable} focuses on non-Kac and non-coamenable {\it free orthogonal quantum groups} $O_F^+$. Using the recently established Haagerup inequality on $O_F^+$ \cite{BVY21,You22}, we show that the inclusion $[0,1]\subsetneq I(O_F^+,p)$ is strict for certain non-Kac and non-coamenable free orthogonal quantum groups $O_F^+$ satisfying a technical assumption $\norm{F}_{op}^6<\frac{d_1+\sqrt{d_1^2-4}}{2}$ where $d_1=\text{Tr}(F^*F)$ (Corollary \ref{cor51}). A summary of the main conclusions regarding $I(\g,p)$ is presented in the following table:

\small
\begin{table}[h]
\centering
\begin{tabular}{|c|c|c|c|}
\hline
 & Kac & non-Kac, coamenable & non-Kac, non-coamenable \\
\hline
$1\leq p<2$ & $I(\g,p)=\mathbb{R}$  & $I(G_q,p)=[0,1]$ & $I(O_F^+,p)\supsetneq [0,1]$\\
& (Trivial) & (Corollary \ref{cor50.5}) & (Corollary \ref{cor51})\\
\hline
$2<p\leq \infty$ & \multicolumn{3}{c|}{$I(\g,p)=\left\{\begin{array}{ll}\mathbb{R}&,~\g\text{ is finite-dimensional}\\ \emptyset&,~\g\text{ is infinite-dimensional} \end{array} \right .$ (Proposition \ref{prop51})} \\
\hline
\end{tabular}
\end{table}
\normalsize

In Section 6, as an application, we introduce another family of twisted Fourier transforms $\pi^{(x)}$, which contributes to the study of the {\it similarity problem} for locally compact quantum groups. The theory associated with $\pi^{(x)}$ runs in parallel with that of $\mathcal{F}_1^{(x)}$; however, a notable advantage of this approach is that each $\pi^{(x)}$ defines a contractive algebra homomorphism from the convolution algebra $L^1(\g)$ into $\ell^{\infty}(\widehat{\g})$ (Proposition \ref{prop500}). Moreover, we prove that the contractive twisted Fourier transform $\pi^{(1)}$ fails to be completely bounded for any non-Kac free orthogonal quantum groups $O_F^+$ (Theorem \ref{thm500}).

\section{Preliminaries}	

\subsection{Compact quantum groups}\label{sec-QG}

Within the von Neumann algebraic framework, in the sense of \cite{KuVa00,KuVa03}, a {\it compact quantum group} $\g$ is given by a triple $(L^{\infty}(\g),\Delta,h)$ where
\begin{enumerate}
    \item $L^{\infty}(\g)$ is a von Neumann algebra,
    \item $\Delta:L^{\infty}(\g)\rightarrow L^{\infty}(\g)\overline{\otimes }L^{\infty}(\g)$ is a normal unital $*$-homomorphism satisfying
    \begin{equation}
        (\text{id}\otimes \Delta)\Delta= (\Delta \otimes\text{id})\Delta,
    \end{equation}
    \item $h$ is a normal faithful state on $L^{\infty}(\g)$ satisfying
    \begin{equation}
        (\text{id}\otimes h)\Delta = h(\cdot) 1_{L^{\infty}(\g)}=(h\otimes \text{id})\Delta.
    \end{equation}
    We call $h$ the {\it Haar state} on $\g$.
\end{enumerate}

A {\it finite dimensional unitary representation} of $\g$ is given by a unitary $u=\sum_{i,j=1}^{n_u}e_{ij}\otimes u_{ij}\in M_{n_u}(\Comp)\otimes L^{\infty}(\g)$ satisfying
\begin{equation}
    \Delta(u_{ij})=\sum_{k=1}^{n_u} u_{ik}\otimes u_{kj}
\end{equation}
for all $1\leq i,j\leq n_u$. In addition, $u$ is called {\it irreducible} if 
\begin{equation}
    (T\otimes 1)u= u(T\otimes 1) \Rightarrow T\in \Comp\cdot \text{Id}_{n_u}.
\end{equation}
Finite dimensional unitary representations $u,v\in M_{n}(\Comp)\otimes L^{\infty}(\g)$ are called {\it unitarily equivalent} if there exists an $n\times n$ unitary matrix $W$ such that
\begin{equation}
    (W\otimes 1)u=v(W\otimes 1).
\end{equation}
We denote by $\text{Irr}(\g)$ the set of all irreducible (finite dimensional) unitary representations of $\g$ up to the unitary equivalence.

For each $\alpha\in \text{Irr}(\g)$, let us denote by $u^{\alpha}\in M_{n_{\alpha}}(\Comp)\otimes L^{\infty}(\g)$ a representative irreducible unitary representation. The {\it contragredient representation} $(u^{\alpha})^c=\sum_{i,j=1}^{n_u}e_{ij}\otimes (u^{\alpha}_{ij})^*\in M_{n_u}(\Comp)\otimes L^{\infty}(\g)$ is not a unitary, but there exists a unique positive invertible matrix $Q_{\alpha}\in M_{n_{\alpha}}(\Comp)$ such that
\begin{equation}
    (Q_{\alpha}^{\frac{1}{2}}\otimes 1)(u^{\alpha})^c (Q_{\alpha}^{-\frac{1}{2}}\otimes 1)
\end{equation}
is a unitary and $\text{Tr}(Q_{\alpha})=\text{Tr}(Q_{\alpha}^{-1})$. We denote by
\begin{equation}
    d_{\alpha}=\text{Tr}(Q_{\alpha})=\text{Tr}(Q_{\alpha}^{-1}),
\end{equation}
and call it the {\it quantum dimension} of $u^{\alpha}$. We may assume that the matrices $Q_{\alpha}$ are diagonal by taking suitable orthonormal bases of $\Comp^{n_{\alpha}}$. If $Q_{\alpha}=\text{Id}_{n_{\alpha}}$ for all $\alpha\in \text{Irr}(\g)$, then $\g$ is called {\it Kac type}. This is equivalent to that the Haar state $h$ is {\it tracial}, i.e.
\begin{equation}
    h(ab)=h(ba),~a,b\in L^{\infty}(\g).
\end{equation}

The space of {\it polynomials} is defined as
\begin{equation}
    \text{Pol}(\g)=\text{span}\left\{u^{\alpha}_{ij}: \alpha\in \text{Irr}(\g),~1\leq i,j\leq n_{\alpha}\right\},
\end{equation}
and we call $u^{\alpha}_{ij}$ a {\it matrix element}. For matrix elements $u^{\alpha}_{ij}$ and $u^{\beta}_{kl}$, the {\it Schur orthogonality relation} states that
\begin{align}
    h\left ( (u^{\beta}_{kl})^*u^{\alpha}_{ij}\right )&=\frac{\delta_{\alpha\beta}\delta_{ik}\delta_{jl}(Q_{\alpha})_{ii}^{-1}}{d_{\alpha}} \\
    h\left ( u^{\beta}_{kl} (u^{\alpha}_{ij})^*\right )&=\frac{\delta_{\alpha\beta}\delta_{ik}\delta_{jl}(Q_{\alpha})_{jj}}{d_{\alpha}}.
\end{align}

%For finite-dimensional unitary representations $u=\sum_{i,j=1}^m e_{ij}\otimes u_{ij}\in M_m(\Comp)\otimes L^{\infty}(\g)$ and $v=\sum_{k,l=1}^n e_{kl}\otimes v_{kl} \in M_n(\Comp)\otimes L^{\infty}(\g)$, the tensor representation is defined by
%\begin{equation}
%    u\tp v = \sum_{i,j=1}^m \sum_{k,l=1}^n e_{ij}\otimes e_{kl}\otimes u_{ij}v_{kl}.
%\end{equation}
%If there exists a 

 A compact quantum group $\g$ is called a {\it compact matrix quantum group} if there exists a unitary representation $u\in M_n(\Comp)\otimes L^{\infty}(\g)$ satisfying that
\begin{equation}
   \text{Pol}(\g)=\text{span}\left\{ u_{i_1j_1}u_{i_2j_2}\cdots u_{i_kj_k}: k\in \n,~1\leq i,j\leq n \right\} .
\end{equation}
If we write $V_0=\Comp\cdot 1_{L^{\infty}(\g)}$ and $V_k=\text{span}\left\{ u_{i_1j_1}u_{i_2j_2}\cdots u_{i_kj_k}: ~1\leq i_s,j_s\leq n \right\}$ for all $k\in \n$, then a natural length function on $\text{Irr}(\g)$ is defined by
\begin{equation}
  |\alpha|= \inf\left\{ k\in \n_0:  \left\{u^{\alpha}_{ij}\right\}_{1\leq i,j\leq n_{\alpha}}\subseteq V_k \right\}.
\end{equation}

Throughout this paper, we will use the following lemma frequently.

\begin{lemma}\label{lem20} \cite[Section 6]{KrSo18}
Let $\g$ be a compact matrix quantum group of non-Kac type and let $|\cdot |$ be the natural length function on $\text{Irr}(\g)$. Then there exists a sequence $(\alpha_k)_{k\in \n}\subseteq \text{Irr}(\g)$ such that
\begin{itemize}
    \item $|\alpha_{k+1}|\leq 2|\alpha_k|$ for all $k\in \n$,
\item $\norm{Q_{\alpha_1}}>1$,
    \item $\norm{Q_{\alpha_{k+1}}}=\norm{Q_{\alpha_{k}}}^{2}$ for all $k\in \n$. 
\end{itemize}
\end{lemma}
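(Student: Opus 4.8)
The plan is to produce $(\alpha_k)$ by iterated self-tensoring, using only the behaviour of the matrices $Q_\alpha$ under the tensor structure of $\text{Rep}(\g)$ together with the subadditivity of the length function. For the base point: since $\g$ is non-Kac there is $\beta\in\text{Irr}(\g)$ with $Q_\beta\neq\text{Id}_{n_\beta}$, and because $Q_\beta>0$ with $\text{Tr}(Q_\beta)=\text{Tr}(Q_\beta^{-1})$, if every eigenvalue of $Q_\beta$ were $\leq 1$ then $\text{Tr}(Q_\beta)\leq\text{Tr}(Q_\beta^{-1})$ with equality only when $Q_\beta=\text{Id}$, so in fact $\norm{Q_\beta}>1$. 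I would set $\alpha_1=\beta$, which has finite length since $\g$ is a compact matrix quantum group.

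The core of the argument is a modular--tensor identity that I would invoke as a standard fact about the canonical modular structure of $\text{Rep}(\g)$: the family $(Q_\alpha)_{\alpha}$ is multiplicative for tensor products and functorial for morphisms, so that whenever an isometry $v\colon\Comp^{n_\gamma}\to\Comp^{n_\alpha}\tens\Comp^{n_\beta}$ exhibits $u^\gamma$ as a subrepresentation of $u^\alpha\tens u^\beta$ one has $vQ_\gamma=(Q_\alpha\tens Q_\beta)v$, hence $Q_\gamma=v^*(Q_\alpha\tens Q_\beta)v$. Running over a complete decomposition $u^\alpha\tens u^\beta\cong\bigoplus_\gamma u^\gamma$ into irreducibles (counted with multiplicity) and block-diagonalising accordingly, this gives
\[
  \max_{\gamma\,\subseteq\,\alpha\tens\beta}\norm{Q_\gamma}=\norm{Q_\alpha\tens Q_\beta}=\norm{Q_\alpha}\norm{Q_\beta};
\]
in particular, for every $\alpha$ there is $\gamma\subseteq\alpha\tens\alpha$ with $\norm{Q_\gamma}=\norm{Q_\alpha}^2$.

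Next I would record the length bound: if $u^\gamma\subseteq u^\alpha\tens u^\beta$, then each matrix element $u^\gamma_{ij}$ is a linear combination of the coefficients $u^\alpha_{kl}u^\beta_{st}$ of $u^\alpha\tens u^\beta$, and these lie in $V_{|\alpha|}\cdot V_{|\beta|}\subseteq V_{|\alpha|+|\beta|}$, so $|\gamma|\leq|\alpha|+|\beta|$. I would then build the sequence inductively: given $\alpha_k$ with $\norm{Q_{\alpha_k}}=\norm{Q_{\alpha_1}}^{2^{k-1}}>1$ and $|\alpha_k|<\infty$, apply the identity above with $\alpha=\beta=\alpha_k$ to select $\alpha_{k+1}\subseteq\alpha_k\tens\alpha_k$ with $\norm{Q_{\alpha_{k+1}}}=\norm{Q_{\alpha_k}}^2$, whence $|\alpha_{k+1}|\leq 2|\alpha_k|$ by the length bound. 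Since $\norm{Q_{\alpha_{k+1}}}>1$ the induction never halts, and $(\alpha_k)$ satisfies the three stated conditions.

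The one point I expect to be non-routine is the modular--tensor identity, specifically the assertion that restricting $Q_\alpha\tens Q_\beta$ to an irreducible component $\gamma$ yields exactly $Q_\gamma$ with no scalar ambiguity, so that the operator norm factors as written; this is where the normalization $\text{Tr}(Q_\alpha)=\text{Tr}(Q_\alpha^{-1})$ and the functoriality of the canonical solutions of the conjugate equations enter. Granting it, the remainder is elementary bookkeeping with $|\cdot|$.
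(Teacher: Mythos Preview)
Your argument is correct and is precisely the standard one: pick $\alpha_1$ with $\norm{Q_{\alpha_1}}>1$ using the trace normalization, then iterate by choosing an irreducible summand of $\alpha_k\tens\alpha_k$ realizing the maximal eigenvalue of $Q_{\alpha_k}\tens Q_{\alpha_k}$, combined with subadditivity of the length. The modular--tensor identity you flag is indeed the only substantive input, and it follows exactly as you indicate from the fact that $Q_\alpha=(\id\tens f_1)(u^\alpha)$ for the Woronowicz character $f_1$, which is a $*$-homomorphism on $\text{Pol}(\g)$ and hence intertwines with all morphisms of $\text{Rep}(\g)$; no scalar ambiguity arises because the normalization $\text{Tr}(Q_\alpha)=\text{Tr}(Q_\alpha^{-1})$ is automatic from this description.

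Note that the paper itself does not prove this lemma: it is stated with a citation to \cite[Section 6]{KrSo18} and used as a black box throughout. Your proof is essentially what one finds in that reference, so there is nothing to contrast.
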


For more details of compact quantum groups, we refer the readers to \cite{Wo87a,Wo87b,Ti08,NeTu13}.

\subsection{Noncommutative $L^p$-spaces}\label{sec-Lp}

Let us denote by $L^1(\g)$ the predual of the von Neumann algebra $L^{\infty}(\g)$. Since $h$ is faithful, we have a natural embedding $\iota:L^{\infty}(\g)\hookrightarrow L^1(\g)$ defined by $\iota(a)=h(\cdot~ a)$, i.e.
\begin{equation}
    [\iota(a)](x)=h(xa),~x\in L^{\infty}(\g).
\end{equation}
This allows us to understand $(L^{\infty}(\g),L^1(\g))$ as a compatible pair of Banach spaces, so we can define the {\it noncommutative $L^p$-space} $(1\leq p\leq \infty)$ as the complex interpolation space 
\begin{equation}
    L^p(\g)= (L^{\infty}(\g),L^1(\g))_{\theta}
\end{equation}
for $\theta=\frac{1}{p}$. For any $1\leq p<\infty$, the space $\text{Pol}(\g)$ is dense in $L^p(\g)$.

From the {\it Tomita-Takesaki theory}, there exists a {\it modular automorphism group} $(\sigma_z)_{z\in \Comp}$ on $\text{Pol}(\g)$ satisfying
\begin{equation}
    h(ab)=h(b\sigma_{-i}(a)),~a,b\in \text{Pol}(\g).
\end{equation}
Specifically, for any $z\in \Comp$, the modular automorphism $\sigma_z$ is determined by
\begin{equation}
    \sigma_z\left (u^{\alpha}_{kl} \right )=(Q_{\alpha})_{kk}^{iz}(Q_{\alpha})_{ll}^{iz}u^{\alpha}_{kl}
\end{equation}
for all $\alpha\in \text{Irr}(\g)$ and $1\leq k,l\leq n_{\alpha}$.

For any $1\leq p\leq \infty$, let us consider the conjugate $p'$ such that $\frac{1}{p}+\frac{1}{p'}=1$. Then a natural dual pairing between $L^{p'}(\g)$ and $L^p(\g)$ is given by
\begin{equation}
    \la f,g\ra_{L^{p'}(\g),L^p(\g)}=h\left (\sigma_{\frac{i}{p'}}(f)g\right ).
\end{equation}
Combining this with \cite[Lemma 3.4. (c)]{Wa17}, we obtain 
\begin{equation}
 \label{eq21} \norm{f}_{L^{p'}(\g)}=\norm{\sigma_{-\frac{i}{p'}}(f^*)}_{L^{p'}(\g)}=\sup_{g\in \text{Pol}(\g): \norm{g}_{L^{p}(\g)}\leq 1}h(f^*g).
    \end{equation}
for all $f\in \text{Pol}(\g)$ with $1\leq p\leq \infty$.

When we understand $(M_n(\Comp),\text{Tr})$ as a noncommutative measure space, we denote by $S^p_n$ the matrix algebra $M_n(\Comp)$ with the norm structure
\begin{equation}
 \norm{A}_{S^p_n}=\left\{\begin{array}{cl} 
 \text{Tr}(|A|^p)^{\frac{1}{p}}&,~1\leq p<\infty,\\
\norm{A}_{op} &,~p=\infty. \end{array} \right .
\end{equation}
Let us denote by $S=\left\{z\in \Comp: 0<\text{Re}(z)<1\right\}$ and by $\mathcal{C}(A)$ the set of all continuous bounded functions $f:\overline{S}\rightarrow M_n(\Comp)$ such that $f|_S:S\rightarrow M_n(\Comp)$ is analytic and $f(\frac{1}{p})=A$. The finite dimensional {\it Schatten $p$-space} $S^p_n$ is the complex interpolation space $(M_{n},S^1_n)_{\theta}$ with $\theta=\frac{1}{p}$ in the sense that
\begin{equation}
    \norm{A}_{S^p_n}=\inf_{f\in \mathcal{C}(A)}\left\{\sup_{s\in \re}\norm{f(is)}_{M_n},\sup_{s\in \re}\norm{f(1+is)}_{S^1_n}\right\}.
\end{equation}
Let us consider the following weighted norm structure
\begin{equation}
    \norm{A}_{X^1_n}=\text{Tr}(|G^x A G^{1-x}|),~A\in M_n(\Comp)
\end{equation}
with an invertible positive $n\times n$ matrix $G$. Then, by definition, the norm structure of the complex interpolation space $X^p_n=(M_n, X^1_n)_{\frac{1}{p}}$ is given by
\begin{align}
    \norm{A}_{X^p_n}=\inf_{g\in \mathcal{C}(A)} \left\{\sup_{s\in \re}\norm{g(is)}_{M_n},\sup_{s\in \re}\norm{g(1+is)}_{X^1_n}\right\}.
\end{align}
For any $g\in \mathcal{C}(A)$, let us consider $f(z)=G^{zx}g(z)G^{z(1-x)}$. This defines an one-to-one correspondence between $\mathcal{C}(A)$ and $\mathcal{C}(G^{\frac{x}{p}}AG^{\frac{1-x}{p}})$ and we have
\begin{align}
    &\norm{f(is)}_{M_n}=\norm{g(is)}_{M_n},\\
    &\norm{f(1+is)}_{S^1_n}=\norm{G^xg(1+is)G^{1-x}}_{S^1_n}=\norm{g(1+is)}_{X^1_n}.
\end{align}
Thus, we can see that 
\begin{align}
    \norm{A}_{X^p_n}&=\inf_{f\in \mathcal{C}(G^{\frac{x}{p}}AG^{\frac{1-x}{p}})} \left\{\sup_{s\in \re}\norm{f(is)}_{M_n},\sup_{s\in \re}\norm{f(1+is)}_{S^1_n}\right\}\\
 \label{eq20.7}   &=\norm{G^{\frac{x}{p}}AG^{\frac{1-x}{p}}}_{S^p_n}.
\end{align}
For more general discussions, see Section 7 and Theorem 11.1 in \cite{Kos84}.

\subsection{Discrete quantum group and Fourier transform}\label{sec-Fourier}

In view of the celebrated {\it Pontryagin duality}, each compact quantum group $\g$ admits a {\it (dual) discrete quantum group} $\widehat{\g}$ whose associated function space is given by a direct sum of matrix algebras
\begin{equation}
    \ell^{\infty}(\widehat{\g})=\ell^{\infty}-\bigoplus_{\alpha\in \text{Irr}(\g)}M_{n_{\alpha}}(\Comp).
\end{equation}
For $p=\infty$, we simply write $M_n=S^{\infty}_n$.

For any $A=(A(\alpha))_{\alpha\in \text{Irr}(\g)}\in \ell^{\infty}(\widehat{\g})$, we denote the {\it support} of $A$ by
\begin{equation}
    \text{supp}(A)=\left\{\alpha\in \text{Irr}(\g): A(\alpha)\neq 0 \right\},
\end{equation}
and define the following subspace
\begin{equation}
    c_{00}(\widehat{\g})=\left\{A\in \ell^{\infty}(\widehat{\g}): \text{supp}(A)\text{ is finite}\right\}.
\end{equation}

The {\it dual Haar weight} $\widehat{h}:\ell^{\infty}(\widehat{\g})_+\rightarrow [0,\infty]$ is given by
\begin{equation}
    \widehat{h}(A)=\sum_{\alpha\in \text{Irr}(\g)}d_{\alpha}\text{Tr}(A(\alpha)Q_{\alpha})
\end{equation}
for all $A=(A(\alpha) )_{\alpha\in \text{Irr}(\g)}\in \ell^{\infty}(\widehat{\g})_+$. 

Let us denote by $Q=(Q_{\alpha})_{\alpha\in \text{Irr}(\g)}\in \displaystyle \prod_{\alpha\in \text{Irr}(\g)}M_{n_{\alpha}}(\Comp)$. The non-commutative $\ell^p$-space ($1\leq p<\infty$) on $\widehat{\g}$ is explicitly defined as
\begin{equation}
    \ell^p(\widehat{\g})= \left\{A\in \ell^{\infty}(\widehat{\g}): \widehat{h}\left ( \left |AQ^{\frac{1}{p}} \right |^p\right )<\infty \right \}
\end{equation}
and the $\ell^p$-norm of $A=(A(\alpha) )_{\alpha\in \text{Irr}(\g)}\in \ell^{p}(\widehat{\g})$ is explicitly given by
\begin{align}
    \norm{A}_{\ell^p(\widehat{\g})}= \left \{\begin{array}{lll} \left ( \sum_{\alpha\in \text{Irr}(\g)} d_{\alpha} \norm{A(\alpha)Q_{\alpha}^{\frac{1}{p}} }_{S^p_{n_{\alpha}}}^p \right )^{\frac{1}{p}}&,~1\leq p<\infty, \\
    \sup_{\alpha\in \text{Irr}(\g)}\norm{A(\alpha)}_{op}&,~p=\infty.
    \end{array} \right .
\end{align}

Similarly as in Subsection \ref{sec-Lp}, a natural dual pairing between $\ell^p(\widehat{\g})$ and $\ell^{p'}(\widehat{\g})$ is given by
\begin{align}
&\la B,A\ra_{\ell^{p'}(\widehat{\g}), \ell^{p}(\widehat{\g})}=\widehat{h}\left ( \sigma_{\frac{i}{p'}}(B)  A \right ),
\end{align}
and we have
\begin{align}
  \label{eq22}  &\norm{B}_{\ell^{p'}(\g)}=\norm{\sigma_{-\frac{i}{p'}}(B^*)}_{\ell^{p'}(\widehat{\g})}=\sup_{A\in c_{00}(\widehat{\g}): \norm{A}_{\ell^p(\widehat{\g})}\leq 1}\widehat{h}(B^*A).
\end{align}

For any $\varphi\in L^1(\g)$, the {\it Fourier coefficient} at $\alpha\in \text{Irr}(\g)$ is defined by
\begin{equation}\label{Fourier1}
  \widehat{\varphi}(\alpha)=  (\text{id}\otimes \varphi)((u^{\alpha})^*),
\end{equation}
and call $\widehat{\varphi}=(\widehat{\varphi}(\alpha))_{\alpha\in \text{Irr}(\g)}$ the {\it Fourier transform }of $\varphi$. We call 
\begin{equation}
    \sum_{\alpha\in \text{Irr}(\g)}\sum_{i,j=1}^{n_{\alpha}}d_{\alpha}(\widehat{\varphi}(\alpha)Q_{\alpha})_{ij}u^{\alpha}_{ji}
\end{equation}
the {\it Fourier series} of $\varphi\in L^1(\g)$, and write
\begin{equation}
    \varphi \sim \sum_{\alpha\in \text{Irr}(\g)}\sum_{i,j=1}^{n_{\alpha}}d_{\alpha}(\widehat{\varphi}(\alpha)Q_{\alpha})_{ij}u^{\alpha}_{ji}.
\end{equation}
In particular, for any $f\in \text{Pol}(\g)\subseteq L^1(\g)$, we have 
\begin{equation}
    f= \sum_{\alpha\in \text{Irr}(\g)}\sum_{i,j=1}^{n_{\alpha}}d_{\alpha}(\widehat{f}(\alpha)Q_{\alpha})_{ij}u^{\alpha}_{ji}.
\end{equation}

%This implies
%\begin{align}
%   & \norm{f}_{L^{p'}(\g)}=\norm{\sigma^{\g}_{-\frac{i}{p'}}(f^*)}_{L^{p'}(\g)}=\sup_{g\in \text{Pol}(\g): \norm{g}_{L^p(\g)}\leq 1} h(f^*g),\\
%   & \norm{B}_{\ell^{p'}(\widehat{\g})}= \norm{\sigma^{\widehat{\g}}_{-\frac{i}{p'}}(B^*)}_{\ell^{p'}(\g)} .
%\end{align}

%Note that the 
%\begin{align}
%    &\norm{f}_{L^{p'}(\g)}=\norm{\sigma_{-\frac{i}{p}}(f^*)}_{L^{p'}(\g)}=\sup_{g\in \text{Pol}(\g): \norm{g}_{L^p(\g)}\leq 1}h(f^*g),\\
%    &\norm{A}_{\ell^{p'}(\g)}=\norm{\sigma_{-\frac{i}{p}}(A^*)}_{\ell^{p'}(\widehat{\g})}=\sup_{B\in c_{00}(\widehat{\g}): \norm{g}_{\ell^p(\widehat{\g})}\leq 1}\widehat{h}(A^*B)
%\end{align}

%\subsection{Twisted property RD}\label{sec-twisted-RD}

\section{Twisted Fourier transforms}\label{sec-twisted}

In this section, let $\g$ be a general compact quantum group. The main goal is to study the linear maps $\mathcal{F}^{(x)}_p$ defined below.

\begin{definition}
    For any $x\in \re$ and $p\in [1,\infty]$, let us consider a linear map $\mathcal{F}^{(x)}_p:\text{Pol}(\g)\rightarrow c_{00}(\widehat{\g})$ given by
    \begin{equation}
        \mathcal{F}^{(x)}_p(f)= Q^{-(\frac{1}{p}-\frac{1}{2})x}\widehat{f}Q^{(\frac{1}{p}-\frac{1}{2})x},~f\in \text{Pol}(\g).
    \end{equation}
    We call these maps $\mathcal{F}^{(x)}_p$ {\it twisted Fourier transforms} on $\g$. In particular, we call $\mathcal{F}^{(0)}_p:f\mapsto \widehat{f}$ the {\it standard Fourier transform}.
\end{definition}

In this paper, we will focus on the question of whether the twisted Fourier transform $\mathcal{F}^{(x)}_p$ extends to a bounded map
\begin{equation}
    \mathcal{F}^{(x)}_p:L^p(\g)\rightarrow \ell^{p'}(\widehat{\g}).
\end{equation}
In other words, we study the Hausdorff-Young inequalities up to constants for the twisted Fourier transforms. Let us define
\begin{equation}
    I(\g,p)=\left\{x\in \re: \mathcal{F}^{(x)}_p:L^p(\g)\rightarrow \ell^{p'}(\widehat{\g})\text{ is bounded} \right\}.
\end{equation}

\subsection{Unboundedness for the cases $p>2$}\label{subsec-3-unbdd}

As in the classical situation, for the cases $p>2$, let us demonstrate that there is no universal constant $K=K(\g,p,x)$ such that
\begin{equation}
    \norm{\mathcal{F}^{(x)}_p(f)}_{\ell^{p'}(\widehat{\g})}\leq K\norm{f}_{L^p(\g)},~f\in L^p(\g),
\end{equation}
i.e. $I(\g,p)=\emptyset$ if $\g$ is infinite-dimensional. To prove this, let us begin with the following Lemma, which will be used frequently throughout this paper.

\begin{lemma}\label{lem30}
Let $\tau_{\alpha}$ be a bijective function on $\left\{1,2,\cdots,n_{\alpha}\right\}$ for each $\alpha\in \text{Irr}(\g)$, and consider 
\begin{equation}
    f=\sum_{\alpha\in \text{Irr}(\g)}\sum_{i=1}^{n_{\alpha}} y^{\alpha}_i u^{\alpha}_{i \tau_{\alpha}(i)}\in \text{Pol}(\g).
\end{equation}
Then, for $p=1$, we have
\begin{equation}
    \norm{\mathcal{F}^{(x)}_1(f)}_{\ell^{\infty}(\widehat{\g})}=\sup_{\alpha\in \text{Irr}(\g)}\sup_{1\leq i\leq n_{\alpha}} \left [ (Q_{\alpha})_{\tau_{\alpha}(i)\tau_{\alpha}(i)}^{-1}(Q_{\alpha})_{ii} \right ]^{\frac{x}{2}}\cdot \frac{(Q_{\alpha})_{ii}^{-1}}{d_{\alpha}}|y^{\alpha}_i|
\end{equation}
and, for any $1<p\leq \infty$, we have
\begin{align}
   &\norm{\mathcal{F}^{(x)}_p(f)}_{\ell^{p'}(\widehat{\g})}\\
   &= \left ( \sum_{\alpha\in \text{Irr}(\g)} \sum_{i=1}^{n_{\alpha}}  \left [ (Q_{\alpha})_{\tau_{\alpha}(i)\tau_{\alpha}(i)}^{-1} (Q_{\alpha})_{ii} \right ]^{(\frac{p'}{2}-1)x} \left [\frac{(Q_{\alpha})_{ii}^{-1}}{ d_{\alpha} } \right ]^{p'-1} \left |y^{\alpha}_i \right |^{p'} \right )^{\frac{1}{p'}}.
\end{align}
In particular, we have
\begin{align}
    &\norm{\mathcal{F}^{(x)}_p(u^{\alpha}_{ij})}_{\ell^{p'}(\widehat{\g})}= [(Q_{\alpha})_{jj}^{-1}(Q_{\alpha})_{ii}]^{(\frac{1}{p}-\frac{1}{2})x}\cdot \left ( \frac{(Q_{\alpha})_{ii}^{-1}}{d_{\alpha}} \right )^{\frac{1}{p}}
\end{align}
for any matrix element $u^{\alpha}_{ij}$ and $1\leq p\leq \infty$.
\end{lemma}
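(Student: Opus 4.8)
The plan is to compute $\norm{\mathcal{F}^{(x)}_p(f)}_{\ell^{p'}(\widehat{\g})}$ directly from the definitions, exploiting the very special ``one nonzero entry per row'' structure of $f$. First I would compute the ordinary Fourier transform $\widehat{f}$. Since $f = \sum_{\alpha}\sum_i y^{\alpha}_i u^{\alpha}_{i\tau_{\alpha}(i)}$ and the Fourier series expansion of a polynomial reads $f = \sum_{\beta}\sum_{k,l} d_{\beta}(\widehat{f}(\beta)Q_{\beta})_{kl} u^{\beta}_{lk}$, comparing coefficients of the linearly independent matrix elements $u^{\beta}_{lk}$ gives $d_{\beta}(\widehat{f}(\beta)Q_{\beta})_{kl} = y^{\beta}_l \delta_{k,\tau_{\beta}(l)}$ (relabelling indices appropriately), hence $\widehat{f}(\beta)_{kl} = \frac{y^{\beta}_l}{d_{\beta}}\,\delta_{k,\tau_{\beta}(l)}\,(Q_{\beta})_{ll}^{-1}$. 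So $\widehat{f}(\alpha)$ is again supported on a single entry in each row, namely the $(\tau_{\alpha}(i),i)$ entry has value $\frac{y^{\alpha}_i}{d_{\alpha}}(Q_{\alpha})_{ii}^{-1}$. Alternatively one can read this off from the last ``in particular'' formula once it is proved, but it is cleanest to do it once from scratch.

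Next I would apply the twist. Because the $Q_{\alpha}$ are diagonal, conjugation by $Q^{\pm(\frac{1}{p}-\frac12)x}$ acts entrywise: the $(k,l)$ entry of $Q_{\alpha}^{-(\frac1p-\frac12)x}\widehat{f}(\alpha)Q_{\alpha}^{(\frac1p-\frac12)x}$ is $(Q_{\alpha})_{kk}^{-(\frac1p-\frac12)x}(Q_{\alpha})_{ll}^{(\frac1p-\frac12)x}\,\widehat{f}(\alpha)_{kl}$. With $k=\tau_{\alpha}(i)$, $l=i$ this produces the scalar factor $\big[(Q_{\alpha})_{\tau_{\alpha}(i)\tau_{\alpha}(i)}^{-1}(Q_{\alpha})_{ii}\big]^{(\frac1p-\frac12)x}$ multiplying $\frac{y^{\alpha}_i}{d_{\alpha}}(Q_{\alpha})_{ii}^{-1}$. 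Thus $\mathcal{F}^{(x)}_p(f)(\alpha)$ is a matrix which still has at most one nonzero entry in each row $\tau_{\alpha}(i)$ (and since $\tau_{\alpha}$ is a bijection, also at most one per column), with the value just computed.

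Then I would plug this into the $\ell^{p'}(\widehat{\g})$ norm. For $1<p\le\infty$ we have $\norm{A}_{\ell^{p'}(\widehat{\g})}^{p'} = \sum_{\alpha} d_{\alpha}\norm{A(\alpha)Q_{\alpha}^{1/p'}}_{S^{p'}_{n_{\alpha}}}^{p'}$. A matrix with at most one nonzero entry per row and per column is, up to permuting rows, diagonal, so its Schatten $p'$-norm is the $\ell^{p'}$-norm of its entries; and right-multiplication by the diagonal matrix $Q_{\alpha}^{1/p'}$ multiplies the entry in column $i$ by $(Q_{\alpha})_{ii}^{1/p'}$. Collecting, the contribution of the entry indexed by $(\alpha,i)$ to $\norm{\mathcal{F}^{(x)}_p(f)}_{\ell^{p'}(\widehat{\g})}^{p'}$ is
\begin{equation*}
d_{\alpha}\cdot \big[(Q_{\alpha})_{\tau_{\alpha}(i)\tau_{\alpha}(i)}^{-1}(Q_{\alpha})_{ii}\big]^{(\frac1p-\frac12)x p'}\cdot \Big(\frac{|y^{\alpha}_i|}{d_{\alpha}}(Q_{\alpha})_{ii}^{-1}\Big)^{p'}(Q_{\alpha})_{ii}.
\end{equation*}
Using $(\tfrac1p-\tfrac12)p' = \tfrac{p'}{2}-1$ (from $\tfrac1p = 1-\tfrac1{p'}$) for the twist exponent, and $d_{\alpha}\cdot d_{\alpha}^{-p'}\cdot(Q_{\alpha})_{ii}^{-p'}(Q_{\alpha})_{ii} = \big(\tfrac{(Q_{\alpha})_{ii}^{-1}}{d_{\alpha}}\big)^{p'-1}$ for the Schur part, this is exactly the summand in the claimed formula; taking the $p'$-th root finishes the $1<p\le\infty$ case. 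For $p=1$ ($p'=\infty$) the same computation of the nonzero entries stands, and $\norm{A}_{\ell^{\infty}(\widehat{\g})} = \sup_{\alpha}\norm{A(\alpha)}_{op}$; the operator norm of a matrix with one nonzero entry per row and column equals the largest absolute value of an entry, giving the stated supremum. The ``in particular'' formula is the special case $f = u^{\alpha}_{ij}$, i.e.\ a single term with $\tau_{\alpha}(i)=j$, $y^{\alpha}_i=1$: substituting into either formula and simplifying $(\tfrac{p'}{2}-1)\tfrac1{p'} = \tfrac12-\tfrac1{p'} = \tfrac1p-\tfrac12$ yields $[(Q_{\alpha})_{jj}^{-1}(Q_{\alpha})_{ii}]^{(\frac1p-\frac12)x}\big(\tfrac{(Q_{\alpha})_{ii}^{-1}}{d_{\alpha}}\big)^{1/p}$, valid for $1\le p\le\infty$ (the $p=1$ endpoint being a direct specialization of the first formula). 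I do not anticipate a genuine obstacle here; the only point requiring a little care is bookkeeping the index relabelling ($k$ vs.\ $\tau_{\alpha}(i)$, row vs.\ column) when reading off $\widehat{f}$ from the Fourier series, and making sure the observation ``at most one entry per row \emph{and} per column'' (which needs bijectivity of $\tau_{\alpha}$) is invoked so that the Schatten and operator norms collapse to entrywise expressions.
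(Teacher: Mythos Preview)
Your proposal is correct and follows essentially the same route as the paper: compute $\widehat{f}$ explicitly as a sum of elementary matrices $E^{\alpha}_{\tau_{\alpha}(i)i}$, apply the diagonal twist entrywise, and then use bijectivity of $\tau_{\alpha}$ to reduce the Schatten/operator norms to entrywise $\ell^{p'}$ (resp.\ sup) expressions. The only cosmetic difference is that the paper makes the last reduction concrete by computing the absolute value $\big|\mathcal{F}^{(x)}_p(f)(\alpha)\,Q_{\alpha}^{1/p'}\big|$ explicitly as a diagonal matrix, whereas you invoke the equivalent observation that a matrix with one nonzero entry per row and per column is diagonal up to a permutation; the algebraic simplifications you record match the paper's result.
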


\begin{proof}

Note that the Fourier transform of $f= \displaystyle \sum_{\alpha\in \text{Irr}(\g)}\sum_{i=1}^{n_{\alpha}} y^{\alpha}_i u^{\alpha}_{i \tau_{\alpha}(i)}\in \text{Pol}(\g)$ is given by
\begin{equation}
    \widehat{f}=\bigoplus_{\alpha\in \text{Irr}(\g)}\sum_{i=1}^{n_{\alpha}} \frac{y^{\alpha}_i(Q_{\alpha})_{ii}^{-1}}{d_{\alpha}}E^{\alpha}_{\tau_{\alpha}(i)i},
\end{equation}
so the twisted Fourier transform is given by
\begin{align}
  &\mathcal{F}^{(x)}_p(f)=Q^{-(\frac{1}{p}-\frac{1}{2})x}\widehat{f}Q^{(\frac{1}{p}-\frac{1}{2})x}\\
 % &=\bigoplus_{\alpha\in \text{Irr}(\g)} Q_{\alpha}^{-(\frac{1}{p}-\frac{1}{2})x}\cdot \left ( \sum_{i=1}^{n_{\alpha}} \widehat{f}(\alpha)_{\tau_{\alpha}(i)i} E^{\alpha}_{\tau_{\alpha}(i)i} \right )\cdot Q_{\alpha}^{(\frac{1}{p}-\frac{1}{2})x}\\
  &= \bigoplus_{\alpha\in \text{Irr}(\g)} \sum_{i=1}^{n_{\alpha}} (Q_{\alpha})_{\tau_{\alpha}(i)\tau_{\alpha}(i)}^{-(\frac{1}{2}-\frac{1}{p'})x} (Q_{\alpha})_{ii}^{(\frac{1}{2}-\frac{1}{p'})x } \cdot \frac{y^{\alpha}_i(Q_{\alpha})_{ii}^{-1}}{d_{\alpha}} E^{\alpha}_{\tau_{\alpha}(i)i}.
\end{align}

An important advantage from bijectivity of $\tau_{\alpha}$ is the following identity:
\begin{align}
    &\left | \sum_{i=1}^{n_{\alpha}} (Q_{\alpha})_{\tau_{\alpha}(i)\tau_{\alpha}(i)}^{-(\frac{1}{2}-\frac{1}{p'})x} (Q_{\alpha})_{ii}^{(\frac{1}{2}-\frac{1}{p'})x } \cdot \frac{y^{\alpha}_i(Q_{\alpha})_{ii}^{-1}}{d_{\alpha}} E^{\alpha}_{\tau_{\alpha}(i)i}Q_{\alpha}^{\frac{1}{p'}} \right |\\
    &= \left ( \sum_{i=1}^{n_{\alpha}} (Q_{\alpha})_{\tau_{\alpha}(i)\tau_{\alpha}(i)}^{-(1-\frac{2}{p'})x} (Q_{\alpha})_{ii}^{(1-\frac{2}{p'})x } \frac{|y^{\alpha}_i|^2 (Q_{\alpha})_{ii}^{-2}}{d_{\alpha}^2} E^{\alpha}_{ii} \cdot (Q_{\alpha})_{ii}^{\frac{2}{p'}} \right )^{\frac{1}{2}}\\
    &= \sum_{i=1}^{n_{\alpha}} (Q_{\alpha})_{\tau_{\alpha}(i)\tau_{\alpha}(i)}^{-(\frac{1}{2}-\frac{1}{p'})x} (Q_{\alpha})_{ii}^{(\frac{1}{2}-\frac{1}{p'})x} \frac{|y^{\alpha}_i| (Q_{\alpha})_{ii}^{-1+\frac{1}{p'}}}{d_{\alpha}}E^{\alpha}_{ii}
%    &= \sum_{i=1}^{n_{\alpha}} \left [ (Q_{\alpha})_{\tau_{\alpha}(i)\tau_{\alpha}(i)}^{-1} (Q_{\alpha})_{ii} \right ]^{(\frac{p'}{2}-1)x} \left |\widehat{f}(\alpha)_{\tau_{\alpha}(i)i} \right |^{p'} E^{\alpha}_{ii}\cdot (Q_{\alpha})_{ii} , 
\end{align}
Thus, it is straightforward to see that 
\begin{equation}
    \norm{\mathcal{F}^{(x)}_1(f)}_{\ell^{\infty}(\widehat{\g})}=\sup_{\alpha\in \text{Irr}(\g)}\sup_{1\leq i\leq n_{\alpha}} \left [ (Q_{\alpha})_{\tau_{\alpha}(i)\tau_{\alpha}(i)}^{-1}(Q_{\alpha})_{ii} \right ]^{\frac{x}{2}}\cdot \frac{(Q_{\alpha})_{ii}^{-1}}{d_{\alpha}}|y^{\alpha}_i|
\end{equation}
and, for any $1 < p\leq \infty$, the $p'$-norm of $\mathcal{F}^{(x)}_p(f)$ is calculated as 
\begin{align}
    \left ( \sum_{\alpha\in \text{Irr}(\g)} \sum_{i=1}^{n_{\alpha}}  \left [ (Q_{\alpha})_{\tau_{\alpha}(i)\tau_{\alpha}(i)}^{-1} (Q_{\alpha})_{ii} \right ]^{(\frac{p'}{2}-1)x} \left [\frac{(Q_{\alpha})_{ii}^{-1}}{ d_{\alpha} } \right ]^{p'-1} \left |y^{\alpha}_i \right |^{p'} \right )^{\frac{1}{p'}}.
\end{align}

\end{proof}

Applying the above Lemma \ref{lem30}, the boundedness of $\mathcal{F}^{(x)}_p$ with $p>2$ is completely answered in the following proposition.

\begin{proposition}\label{prop51}
For any $p\in (2,\infty]$, we have
\begin{equation}
I(\g,p)=\left\{\begin{array}{ll}\mathbb{R}&,~\text{if }\g\text{ is finite-dimensional}\\ \emptyset&,~\text{if }\g\text{ is infinite-dimensional} \end{array} \right . .    
\end{equation}

\end{proposition}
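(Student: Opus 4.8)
The plan is to treat the dichotomy directly, the easy half being essentially a triviality.

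\textbf{Finite‑dimensional case.} If $\Pol(\g)$ is finite dimensional then $\Irr(\g)$ is finite, $L^p(\g)=\Pol(\g)$ and $\ell^{p'}(\widehat\g)=c_{00}(\widehat\g)$ are finite dimensional normed spaces, and every linear map between them is bounded; hence $\mathcal F^{(x)}_p$ is bounded for every $x\in\re$ and $I(\g,p)=\re$. This needs no argument.

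\textbf{Infinite‑dimensional case.} Now $\Irr(\g)$ is infinite; fix $x\in\re$ and $p\in(2,\infty]$ and aim to produce $(f_n)\subseteq\Pol(\g)$ with $\norm{\mathcal F^{(x)}_p(f_n)}_{\ell^{p'}}/\norm{f_n}_{L^p}\to\infty$. First I would dispose of $p=\infty$: interpolating the family $\mathcal F^{(x)}_p$ against the Plancherel isometry $\mathcal F^{(0)}_2\colon L^2(\g)\to\ell^2(\widehat\g)$ (the twist is trivial at $p=2$, and under interpolation $\frac1q-\frac12=(1-\theta)(\frac1p-\frac12)$, so the operators interpolate to $\mathcal F^{(x)}_q$) shows that boundedness of $\mathcal F^{(x)}_p$ forces boundedness of $\mathcal F^{(x)}_q$ for all $q\in[2,p]$; hence it suffices to prove unboundedness for finite $p>2$. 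Next comes the key observation: if $f$ is supported on diagonal matrix elements $\{u^\alpha_{ii}\}$, then in Lemma \ref{lem30} (with $\tau_\alpha=\id$) all the twisting factors $[(Q_\alpha)_{\tau_\alpha(i)\tau_\alpha(i)}^{-1}(Q_\alpha)_{ii}]^{(\cdots)x}$ equal $1$, so the twist is invisible and for $f=\sum_{(\alpha,i)\in\Lambda}\eps_{\alpha,i}c_{\alpha,i}u^\alpha_{ii}$ (finite $\Lambda$, signs $\eps_{\alpha,i}=\pm1$, $c_{\alpha,i}>0$) Lemma \ref{lem30} gives, independently of the signs,
\[
\norm{\mathcal F^{(x)}_p(f)}_{\ell^{p'}}=\Big(\sum_{(\alpha,i)\in\Lambda}t_{\alpha,i}^{\,p'-1}c_{\alpha,i}^{\,p'}\Big)^{1/p'},\qquad t_{\alpha,i}:=\frac{(Q_\alpha)_{ii}^{-1}}{d_\alpha}\in(0,1].
\]

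The heart of the matter is then to bound $\norm{f}_{L^p}$ well; the naive triangle inequality is too lossy, so I would randomize. Using the noncommutative Khintchine inequality in $L^p(\g)$ ($2\le p<\infty$) together with $\norm{u^\alpha_{ii}}_{L^\infty(\g)}\le1$ and the elementary bound $\norm{y}_{L^{p/2}(\g)}\le\norm{y}_{L^\infty(\g)}$ for $y\ge0$, one gets $\mathbb E_{\eps}\norm{f}_{L^p(\g)}\lesssim_p\big(\sum_{(\alpha,i)\in\Lambda}c_{\alpha,i}^{2}\big)^{1/2}$, so some sign choice yields a test function with $\norm{f}_{L^p}\lesssim_p(\sum c_{\alpha,i}^2)^{1/2}$. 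Optimizing over $(c_{\alpha,i})$ is just computing the norm of the diagonal map $\ell^2_{|\Lambda|}\to\ell^{p'}_{|\Lambda|}$ with entries $t_{\alpha,i}^{1/p}$, namely $\big(\sum_{(\alpha,i)\in\Lambda}t_{\alpha,i}^{2/(p-2)}\big)^{1/s}$ with $\frac1s=\frac1{p'}-\frac12>0$; letting $\Lambda$ exhaust $\Irr(\g)$, unboundedness of $\mathcal F^{(x)}_p$ follows as soon as
\[
\sum_{\alpha\in\Irr(\g)}\ \sum_{i=1}^{n_\alpha}t_{\alpha,i}^{\,2/(p-2)}=\infty.\tag{$\star$}
\]
Since $\sum_{i=1}^{n_\alpha}t_{\alpha,i}=\Tr(Q_\alpha^{-1})/d_\alpha=1$ for every $\alpha$, this is immediate when $\tfrac2{p-2}\le1$, i.e.\ $p\ge4$: there $t_{\alpha,i}^{2/(p-2)}\ge t_{\alpha,i}$, so each inner sum is $\ge1$ and $(\star)$ holds because $\Irr(\g)$ is infinite.

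The remaining range $2<p<4$ is where I expect the real work, and is the main obstacle: there the exponent $\tfrac2{p-2}$ exceeds $1$ and $(\star)$ can genuinely fail — for instance for $O_N^+$ of Kac type, where all $t_{\alpha,i}=d_\alpha^{-1}$ and the dimensions grow exponentially. When $\sup_\alpha n_\alpha<\infty$ one still wins by taking one diagonal entry per representation with $\max_i t_{\alpha,i}\ge 1/\sup_\alpha n_\alpha$, producing a ratio $\gtrsim N^{1/p'-1/2}$. The genuinely delicate case is $\sup_\alpha n_\alpha=\infty$ with $2<p<4$ and the modular matrices nearly scalar; here the diagonal test functions are too weak and one must switch to the characters $\chi_\alpha=\sum_i u^\alpha_{ii}$ (still diagonally supported, so the twist is still invisible, and $\norm{\widehat{\chi_\alpha}}_{\ell^{p'}}=d_\alpha^{-1/p}\Tr(Q_\alpha^{-(p'-1)})^{1/p'}$), and establish $\norm{\chi_\alpha}_{L^p}=o\big(\norm{\widehat{\chi_\alpha}}_{\ell^{p'}}\big)$ along a suitable sequence of representations. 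This last estimate — which expresses that characters are far from having constant modulus, and which the crude interpolation bound $\norm{\chi_\alpha}_{L^p}\le n_\alpha^{1-2/p}$ only delivers up to a constant — is exactly the sticking point; it is supplied in the Kac case by rapid‑decay/Haagerup‑type control of $\norm{\chi_\alpha}_{L^\infty}$ by a polynomial in the length of $\alpha$ (available in particular for the free orthogonal quantum groups), and in the non‑Kac case by Lemma \ref{lem20}, which produces representations $\alpha_k$ with $\norm{Q_{\alpha_k}}\to\infty$ for which the off‑diagonal matrix elements $u^{\alpha_k}_{ij}$ (whose twisted transform, by the last formula of Lemma \ref{lem30}, acquires the large factor $[\,\norm{Q_{\alpha_k}}\,\norm{Q_{\alpha_k}^{-1}}\,]^{(\frac12-\frac1p)|x|}$ against an $L^p$‑norm $\le1$) finish the argument.
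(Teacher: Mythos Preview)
Your diagonal--plus--Khintchine strategy is a nice idea for making the twist invisible, and it does dispatch $p\ge4$ cleanly, but there are two genuine gaps in the range $2<p<4$.

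The first is the Kac case with $\sup_\alpha n_\alpha=\infty$. You correctly identify this as the sticking point and propose to resolve it via rapid-decay control of $\norm{\chi_\alpha}_{L^\infty}$, but the RD property is \emph{not} available for arbitrary Kac compact quantum groups (it is famously open already for $\widehat{SL_3(\Z)}$), so this does not close the argument. The paper does not prove the Kac case inline either: it cites an external random-Fourier-series result. The second gap is in your non-Kac sketch. The last formula of Lemma~\ref{lem30} gives $\norm{\mathcal F^{(x)}_p(u^\alpha_{ij})}_{\ell^{p'}}=[(Q_\alpha)_{jj}^{-1}(Q_\alpha)_{ii}]^{(\frac1p-\frac12)x}\cdot t_{\alpha,i}^{1/p}$, so the numerator carries the small factor $t_{\alpha,i}^{1/p}$ as well as your ``large factor''; bounding the denominator merely by $\norm{u^\alpha_{ij}}_{L^p}\le1$ does not produce a ratio tending to infinity. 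The paper instead uses the interpolation $\norm{u^\alpha_{ij}}_{L^p}\le\norm{u^\alpha_{ij}}_{L^2}^{2/p}\norm{u^\alpha_{ij}}_{L^\infty}^{1-2/p}=t_{\alpha,i}^{1/p}\norm{u^\alpha_{ij}}_{L^\infty}^{1-2/p}$ to cancel $t_{\alpha,i}^{1/p}$, and then---crucially for $x\le0$, in particular for $x=0$ where your factor $[\,\cdot\,]^{(\frac12-\frac1p)|x|}$ equals $1$---invokes the bound $\norm{u^\alpha_{ij}}_{L^\infty}\le(Q_\alpha)_{ii}^{-1/2}(Q_\alpha)_{jj}^{1/2}$ coming from unitarity of $(Q_\alpha^{1/2}\otimes1)(u^\alpha)^c(Q_\alpha^{-1/2}\otimes1)$. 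Choosing $i,j$ so that $(Q_\alpha)_{ii}=\norm{Q_\alpha}_{op}$ and $(Q_\alpha)_{jj}^{-1}=\norm{Q_\alpha^{-1}}_{op}$ then yields a ratio $\ge[\norm{Q_\alpha}_{op}\norm{Q_\alpha^{-1}}_{op}]^{(\frac12-\frac1p)(1-x)}$, which blows up along the sequence of Lemma~\ref{lem20} for every $x\le0$. Without this conjugate-unitary bound your off-diagonal argument does not cover $x=0$ at all.
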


\begin{proof}
It is clear that $I(\g,p)=\re$ if $\g$ is finite-dimensional, so let us suppose that $\g$ is infinite-dimensional and consider the following two situations: (1) $\g$ is of Kac type and (2) $\g$ is of non-Kac type.

(1) Let us suppose that $\g$ is of Kac type and $p\geq 2$. Then the Fourier transform $f\in\text{Pol}(\g)\mapsto \widehat{f}\in c_{00}(\widehat{\g})$ extends to a bounded linear map from $L^p(\g)$ into $\ell^{p'}(\widehat{\g})$ if and only if $\g$ is finite-dimensional by \cite[Subsection 3.5]{Youn-Thesis} whose arguments rely essentially on the theory of random Fourier series.

(2) Now, let us suppose that $\g$ is of non-Kac type. Note that it is enough to take indices $i=i(\alpha)$ and $j=j(\alpha)$ such that
\begin{equation}
    \sup_{\alpha\in \text{Irr}(\g)}\frac{\norm{\mathcal{F}^{(x)}_p(u^{\alpha}_{ij})}_{\ell^{p'}(\widehat{\g})}}{\norm{u^{\alpha}_{ij}}_{L^p(\g)}}=\infty.
\end{equation}
In this case, the numerator is given by $[(Q_{\alpha})_{jj}(Q_{\alpha})_{ii}^{-1}]^{(\frac{1}{2}-\frac{1}{p})x}\cdot \left ( \frac{(Q_{\alpha})_{ii}^{-1}}{d_{\alpha}}\right )^{\frac{1}{p}}$ by Lemma \ref{lem30}, and we have
\begin{equation}
    \norm{u^{\alpha}_{ij}}_{L^p(\g)}\leq \norm{u^{\alpha}_{ij}}_{L^2(\g)}^{\frac{2}{p}}\cdot \norm{u^{\alpha}_{ij}}_{L^{\infty}(\g)}^{1-\frac{2}{p}}=  \left ( \frac{(Q_{\alpha})_{ii}^{-1}}{d_{\alpha}}\right )^{\frac{1}{p}}\cdot \norm{u^{\alpha}_{ij}}_{L^{\infty}(\g)}^{1-\frac{2}{p}}
\end{equation}
since $\frac{\frac{2}{p}}{2}+\frac{1-\frac{2}{p}}{\infty}=\frac{1}{p}+0=\frac{1}{p}$. Combining these observations, it is enough to prove 
\begin{equation}
\sup_{\alpha\in \text{Irr}(\g)}\frac{[(Q_{\alpha})_{jj}(Q_{\alpha})_{ii}^{-1}]^{(\frac{1}{2}-\frac{1}{p})x}}{\norm{u^{\alpha}_{ij}}_{L^{\infty}(\g)}^{1-\frac{2}{p}}}=\infty.
\end{equation}

Let us divide our discussions into the following two cases: (a) $x>0$ and (b) $x\leq 0$. For the case (a), let us take the indices $i=i(\alpha)$ and $j=j(\alpha)$ such that $(Q_{\alpha})_{jj}=\norm{Q_{\alpha}}_{op}$ and $(Q_{\alpha})_{ii}^{-1}=\norm{Q_{\alpha}^{-1}}_{op}$. Then we obtain
\begin{align}
    &\sup_{\alpha\in \text{Irr}(\g)}\frac{[(Q_{\alpha})_{jj}(Q_{\alpha})_{ii}^{-1}]^{(\frac{1}{2}-\frac{1}{p})x}}{\norm{u^{\alpha}_{ij}}_{L^{\infty}(\g)}^{1-\frac{2}{p}}}\geq \sup_{\alpha\in \text{Irr}(\g)}\frac{[\norm{Q_{\alpha}}_{op}\norm{Q_{\alpha}^{-1}}_{op}]^{(\frac{1}{2}-\frac{1}{p})x}}{1}=\infty
\end{align}
by Lemma \ref{lem20}. For the other case (b), let us take the indices $i=i(\alpha)$ and $j=j(\alpha)$ such that $(Q_{\alpha})_{jj}^{-1}=\norm{Q_{\alpha}^{-1}}_{op}$ and $(Q_{\alpha})_{ii}=\norm{Q_{\alpha}}_{op}$. In this case, note that we have
\begin{equation}
   \norm{u^{\alpha}_{ij}}_{L^{\infty}(\g)}=\norm{(u^{\alpha}_{ij})^*}_{L^{\infty}(\g)}\leq  (Q_{\alpha})_{ii}^{-\frac{1}{2}}(Q_{\alpha})_{jj}^{\frac{1}{2}}=[\norm{Q_{\alpha}}_{op}\norm{Q_{\alpha}^{-1}}_{op}]^{-\frac{1}{2}}
\end{equation}
since $(Q_{\alpha}^{\frac{1}{2}}\otimes 1)(u^{\alpha})^c (Q_{\alpha}^{-\frac{1}{2}}\otimes 1)$ is a unitary. Thus, we obtain
\begin{align}
    \sup_{\alpha\in \text{Irr}(\g)}\frac{[(Q_{\alpha})_{jj}^{-1}(Q_{\alpha})_{ii}]^{(\frac{1}{2}-\frac{1}{p})(-x)}}{\norm{u^{\alpha}_{ij}}_{L^{\infty}(\g)}^{1-\frac{2}{p}}}&\geq \sup_{\alpha\in \text{Irr}(\g)}\frac{[\norm{Q_{\alpha}^{-1}}_{op}\norm{Q_{\alpha}}_{op}]^{(\frac{1}{2}-\frac{1}{p})(-x)}}{[\norm{Q_{\alpha}}_{op}\norm{Q_{\alpha}^{-1}}_{op}]^{-(\frac{1}{2}-\frac{1}{p})}}\\
    =&\sup_{\alpha\in \text{Irr}(\g)}[\norm{Q_{\alpha}}_{op}\norm{Q_{\alpha}^{-1}}_{op}]^{(\frac{1}{2}-\frac{1}{p})(1-x)}=\infty
\end{align}
again by Lemma \ref{lem20}.

\end{proof}

\subsection{Boundedness for the cases $1\leq p< 2$ and $0\leq x \leq 1$}\label{subsec-3-bdd}

In this Subsection, we will focus on the cases $1\leq p< 2$. If $\g$ is of Kac type, then we have $\mathcal{F}^{(x)}_p=\mathcal{F}^{(0)}_p$ for all $x\in \re$, so we obtain $\mathbb{R}=I(\g,p)$ for all $1\leq p<2$ thanks to the standard Hausdorff-Young inequality.

However, for any non-Kac compact quantum group $\g$, the twisted Fourier transforms $\mathcal{F}^{(x)}_p$ and $\mathcal{F}^{(y)}_p$ ($x\neq y$) are not comparable to each other in the following sense.

\begin{proposition}\label{thm-incomparability}
    Let $\g$ be a non-Kac compact quantum group, and fix $p\in [1,2)$ and $x,y\in \re$. If we suppose that there exists a universal constant $K=K(\g,p,x,y)>0$ such that
    \begin{equation}
      \norm{\mathcal{F}^{(x)}_p(f)}_{\ell^{p'}(\widehat{\g})}\leq K \norm{\mathcal{F}^{(y)}_p(f)}_{\ell^{p'}(\widehat{\g})}
    \end{equation}
for all $f\in \text{Pol}(\g)$, then $x=y$ holds.
\end{proposition}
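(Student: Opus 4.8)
The plan is to test the hypothesised comparison inequality on the matrix elements $u^{\alpha}_{ij}$, for which Lemma \ref{lem30} supplies an exact formula for the norm $\norm{\mathcal{F}^{(x)}_p(u^{\alpha}_{ij})}_{\ell^{p'}(\widehat{\g})}$. Substituting $f=u^{\alpha}_{ij}$ into the assumed estimate and cancelling the common strictly positive factor $\bigl((Q_{\alpha})_{ii}^{-1}/d_{\alpha}\bigr)^{1/p}$ on both sides (and the positive factor $\bigl[(Q_{\alpha})_{jj}^{-1}(Q_{\alpha})_{ii}\bigr]^{(\frac1p-\frac12)y}$), the inequality collapses to
\[
\bigl[(Q_{\alpha})_{jj}^{-1}(Q_{\alpha})_{ii}\bigr]^{(\frac1p-\frac12)(x-y)}\le K
\qquad\text{for all }\alpha\in\text{Irr}(\g),\ 1\le i,j\le n_{\alpha}.
\]
Since $p<2$ we have $c:=\tfrac1p-\tfrac12>0$, so this forces the family of numbers $(Q_{\alpha})_{jj}^{-1}(Q_{\alpha})_{ii}$ to be uniformly bounded above when $x>y$, and uniformly bounded below away from $0$ when $x<y$.

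To reach a contradiction in either case I would use two facts about the modular matrices. First, the normalisation $\text{Tr}(Q_{\alpha})=\text{Tr}(Q_{\alpha}^{-1})$ forces $\min_k(Q_{\alpha})_{kk}\le 1\le\max_k(Q_{\alpha})_{kk}$, so there are indices $i,j$ with $(Q_{\alpha})_{ii}=\norm{Q_{\alpha}}_{op}$ and $(Q_{\alpha})_{jj}^{-1}\ge 1$, giving $(Q_{\alpha})_{jj}^{-1}(Q_{\alpha})_{ii}\ge\norm{Q_{\alpha}}_{op}$; symmetrically one can also arrange $(Q_{\alpha})_{jj}^{-1}(Q_{\alpha})_{ii}\le\norm{Q_{\alpha}}_{op}^{-1}$. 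Second, $\sup_{\alpha\in\text{Irr}(\g)}\norm{Q_{\alpha}}_{op}=\infty$: since $\g$ is non-Kac there is $\alpha_0$ with $Q_{\alpha_0}\neq\text{Id}$, hence $\norm{Q_{\alpha_0}}_{op}>1$, and using that the modular matrices are multiplicative under tensor products (so that $Q_{\alpha_0}^{\otimes n}$ is unitarily equivalent to $\bigoplus_{\gamma\subseteq\alpha_0^{\otimes n}}Q_{\gamma}$) one gets $\max_{\gamma\subseteq\alpha_0^{\otimes n}}\norm{Q_{\gamma}}_{op}=\norm{Q_{\alpha_0}}_{op}^{n}\to\infty$ as $n\to\infty$; when $\g$ is a compact matrix quantum group this is already contained in Lemma \ref{lem20}. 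Now if $x>y$, pick $\alpha$ with $\norm{Q_{\alpha}}_{op}$ arbitrarily large and $i,j$ as above, so the left-hand side of the displayed inequality is $\ge\norm{Q_{\alpha}}_{op}^{\,c(x-y)}\to\infty$, contradicting $\le K$; if $x<y$, the symmetric choice makes the base $\le\norm{Q_{\alpha}}_{op}^{-1}\to 0$, and the negative exponent $c(x-y)$ again sends the left-hand side to $\infty$. Hence $x=y$.

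The argument is essentially mechanical once Lemma \ref{lem30} is available; the one point that requires a little care is the claim $\sup_{\alpha}\norm{Q_{\alpha}}_{op}=\infty$ in full generality (without assuming $\g$ is a compact matrix quantum group), which is exactly where non-Kac-ness is genuinely used — via multiplicativity of the $Q_{\alpha}$ under tensoring, equivalently the growth phenomenon underlying Lemma \ref{lem20}. Everything else is bookkeeping with the explicit formula for $\norm{\mathcal{F}^{(x)}_p(u^{\alpha}_{ij})}_{\ell^{p'}(\widehat{\g})}$.
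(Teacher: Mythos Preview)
Your proof is correct and follows the same line as the paper: test the inequality on single matrix elements $u^{\alpha}_{ij}$, use the explicit norm formula from Lemma \ref{lem30} to reduce to a bound of the form $[(Q_{\alpha})_{jj}^{-1}(Q_{\alpha})_{ii}]^{(\frac1p-\frac12)(x-y)}\le K$, and then contradict this by letting $\norm{Q_{\alpha}}_{op}\to\infty$. The only minor differences are cosmetic---the paper pairs the extreme eigenvalues to get $\norm{Q_{\alpha}}_{op}\norm{Q_{\alpha}^{-1}}_{op}$ and simply cites Lemma \ref{lem20} for unboundedness, whereas you use a single factor $\norm{Q_{\alpha}}_{op}$ and spell out the tensor-power argument explicitly (which, as you note, actually covers the general non-Kac case while Lemma \ref{lem20} is stated only for compact matrix quantum groups).
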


\begin{proof}
Note that Lemma \ref{lem30} implies
\begin{equation}
    \frac{\norm{\mathcal{F}^{(x)}_p(u^{\alpha}_{ij})}_{\ell^{p'}(\widehat{\g})}}{\norm{\mathcal{F}^{(y)}_p(u^{\alpha}_{ij})}_{\ell^{p'}(\widehat{\g})}}=\left [ (Q_{\alpha})_{ii} ( Q_{\alpha})_{jj}^{-1} \right ]^{(\frac{1}{p}-\frac{1}{2})(x-y)},
\end{equation}
for any matrix elements $u^{\alpha}_{ij}$. From now, let us assume $x\neq y$. 
\begin{itemize}
    \item If $x>y$, let us take indices $i=i(\alpha),j=j(\alpha)$ such that $(Q_{\alpha})_{ii}=\norm{Q_{\alpha}}_{op}$ and $(Q_{\alpha})_{jj}^{-1}=\norm{Q_{\alpha}^{-1}}_{op}$. 
    \item If $x< y$, let us take indices $i=i(\alpha),j=j(\alpha)$ such that $(Q_{\alpha}^{-1})_{ii}=\norm{Q_{\alpha}^{-1}}_{op}$ and $(Q_{\alpha})_{jj}=\norm{Q_{\alpha}}_{op}$.
\end{itemize}
Then, in both cases, we obtain
\begin{equation}
    \frac{\norm{\mathcal{F}^{(x)}_p(u^{\alpha}_{ij})}_{\ell^{p'}(\widehat{\g})}}{\norm{\mathcal{F}^{(y)}_p(u^{\alpha}_{ij})}_{\ell^{p'}(\widehat{\g})}}=\left [ \norm{Q_{\alpha}}_{op}\norm{Q_{\alpha}^{-1}}_{op} \right ]^{(\frac{1}{p}-\frac{1}{2})\left |  x-y\right |}
\end{equation}
for all $\alpha\in \text{Irr}(\g)$ by Lemma \ref{lem30}, so we can conclude that
\begin{equation}
  \sup_{f\in L^p(\g)\setminus \left\{0\right\} }  \frac{\norm{\mathcal{F}^{(x)}_p(f)}_{\ell^{p'}(\widehat{\g})}}{\norm{\mathcal{F}^{(y)}_p(f)}_{\ell^{p'}(\widehat{\g})}} \geq  \left [ \sup_{\alpha\in \text{Irr}(\g)} \norm{Q_{\alpha}}_{op}\norm{Q_{\alpha}^{-1}}_{op} \right ]^{ (\frac{1}{p}-\frac{1}{2}) \left |  x-y \right |}=\infty.
\end{equation}
The last equality is thanks to Lemma \ref{lem20}.
    
\end{proof}

The main result of this Section is that $[0,1]\subseteq I(\g,p)$ for all $1\leq p<2$. Let us begin with the case $p=1$ in the following theorem.

\begin{theorem}\label{thm-main0}
Let $\g$ be a general compact quantum group. For any $\varphi\in L^1(\g)$, we have
\begin{equation}\label{eq36}
   \sup_{0\leq x \leq 1} \norm{Q^{-\frac{x}{2}}\widehat{\varphi}Q^{\frac{x}{2}}}_{\ell^{\infty}(\widehat{\g})}\leq \norm{\varphi}_{L^1(\g)}.
\end{equation}
In particular, we have $[0,1]\subseteq I(\g,1)$.
\end{theorem}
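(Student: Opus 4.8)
### Proof proposal for Theorem~\ref{thm-main0}

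The plan is to reduce the $\ell^{\infty}$-norm estimate to a pointwise bound on matrix entries of $\widehat{\varphi}(\alpha)$ and then recognize each entry as the value of $\varphi$ against a matrix element of a unitary representation. Fix $\alpha\in\Irr(\g)$ and $1\leq i,j\leq n_{\alpha}$. Since the matrices $Q_{\alpha}$ are diagonal, the $(i,j)$-entry of $Q_{\alpha}^{-x/2}\widehat{\varphi}(\alpha)Q_{\alpha}^{x/2}$ equals $(Q_{\alpha})_{ii}^{-x/2}(Q_{\alpha})_{jj}^{x/2}\,\widehat{\varphi}(\alpha)_{ij}$, and by definition $\widehat{\varphi}(\alpha)_{ij}=\varphi\bigl((u^{\alpha}_{ji})^{*}\bigr)$. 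So it suffices to show
\begin{equation}
\sup_{0\leq x\leq 1}\;\bigl[(Q_{\alpha})_{jj}(Q_{\alpha})_{ii}^{-1}\bigr]^{x/2}\,\bigl|\varphi\bigl((u^{\alpha}_{ji})^{*}\bigr)\bigr|\;\leq\;\norm{\varphi}_{L^1(\g)}
\end{equation}
for every $\alpha,i,j$. Bounding $|\varphi(a)|\leq\norm{\varphi}_{L^1(\g)}\norm{a}_{L^{\infty}(\g)}$, the whole problem collapses to the operator-norm estimate
\begin{equation}\label{eq-proofplan-key}
\bigl[(Q_{\alpha})_{jj}(Q_{\alpha})_{ii}^{-1}\bigr]^{x/2}\,\norm{(u^{\alpha}_{ji})^{*}}_{L^{\infty}(\g)}\;\leq\;1,\qquad 0\leq x\leq 1.
\end{equation}

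The main step is therefore to prove \eqref{eq-proofplan-key}, and the key input is that $(Q_{\alpha}^{1/2}\otimes 1)(u^{\alpha})^{c}(Q_{\alpha}^{-1/2}\otimes 1)$ is a unitary. Writing out the $(j,i)$-entry of this unitary gives $(Q_{\alpha})_{jj}^{1/2}(Q_{\alpha})_{ii}^{-1/2}(u^{\alpha}_{ji})^{*}$, and since each matrix entry of a unitary in $M_{n}(\C)\otimes L^{\infty}(\g)$ has $L^{\infty}$-norm at most $1$, we get $\norm{(u^{\alpha}_{ji})^{*}}_{L^{\infty}(\g)}\leq (Q_{\alpha})_{jj}^{-1/2}(Q_{\alpha})_{ii}^{1/2}$, i.e. $[(Q_{\alpha})_{jj}(Q_{\alpha})_{ii}^{-1}]^{1/2}\norm{(u^{\alpha}_{ji})^{*}}_{L^{\infty}(\g)}\leq 1$. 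Simultaneously, unitarity of $u^{\alpha}$ itself gives $\norm{(u^{\alpha}_{ji})^{*}}_{L^{\infty}(\g)}=\norm{u^{\alpha}_{ji}}_{L^{\infty}(\g)}\leq 1$, which is exactly \eqref{eq-proofplan-key} at $x=0$. For $0\leq x\leq 1$ one interpolates between these two endpoints: the quantity $[(Q_{\alpha})_{jj}(Q_{\alpha})_{ii}^{-1}]^{x/2}\norm{(u^{\alpha}_{ji})^{*}}_{L^{\infty}(\g)}$ is log-convex (indeed log-affine) in $x$, so its maximum over $[0,1]$ is attained at an endpoint, where it is $\leq 1$ by the two observations above. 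This also explains why the supremum over $0\leq x\leq 1$ costs nothing.

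Assembling the pieces: taking the supremum over $i,j$ and then over $\alpha\in\Irr(\g)$ yields $\sup_{0\leq x\leq1}\norm{Q^{-x/2}\widehat{\varphi}Q^{x/2}}_{\ell^{\infty}(\widehat{\g})}\leq\norm{\varphi}_{L^1(\g)}$, which is \eqref{eq36}. The final assertion $[0,1]\subseteq I(\g,1)$ is immediate: for each fixed $x\in[0,1]$, inequality \eqref{eq36} says precisely that $\mathcal{F}^{(x)}_1=Q^{-x/2}\widehat{(\cdot)}Q^{x/2}$ extends to a contraction $L^1(\g)\to\ell^{\infty}(\widehat{\g})=\ell^{1'}(\widehat{\g})$, so $x\in I(\g,1)$. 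I do not anticipate a serious obstacle here; the only point requiring care is the bookkeeping with the diagonal entries of $Q_{\alpha}$ and making sure the two endpoint estimates are applied to the correct entry (note the transpose of indices: the bound on $\norm{(u^{\alpha}_{ji})^{*}}$ comes with weights $(Q_{\alpha})_{jj}^{-1/2}(Q_{\alpha})_{ii}^{1/2}$, which matches the weight $[(Q_{\alpha})_{jj}(Q_{\alpha})_{ii}^{-1}]^{x/2}$ appearing on the twisted Fourier side).
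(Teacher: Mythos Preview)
Your entrywise analysis is correct, but the assembly step has a genuine gap: bounding every matrix entry of $A:=Q_{\alpha}^{-x/2}\widehat{\varphi}(\alpha)Q_{\alpha}^{x/2}$ by $\norm{\varphi}_{L^1(\g)}$ does \emph{not} yield $\norm{A}_{op}\leq\norm{\varphi}_{L^1(\g)}$. For an $n\times n$ matrix, $\max_{i,j}|A_{ij}|\leq C$ only gives $\norm{A}_{op}\leq nC$, and here $n=n_{\alpha}$ is unbounded over $\Irr(\g)$. Since $\ell^{\infty}(\widehat{\g})$ carries the operator norm on each block, ``taking the supremum over $i,j$'' controls the wrong quantity.

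The paper closes this gap by working with the whole matrix at once rather than entry-by-entry. The key observation is that any $\varphi\in L^1(\g)$ is completely bounded with $\norm{\varphi}_{cb}=\norm{\varphi}_{L^1(\g)}$, so applying $\text{id}_{n_{\alpha}}\otimes\varphi$ to the unitary $(Q_{\alpha}^{1/2}\otimes 1)(u^{\alpha})^{c}(Q_{\alpha}^{-1/2}\otimes 1)$ produces a matrix of operator norm at most $\norm{\varphi}_{L^1(\g)}$. A direct computation identifies this matrix with $Q_{\alpha}^{1/2}\widehat{\varphi}(\alpha)^{t}Q_{\alpha}^{-1/2}$, whose operator norm equals that of $Q_{\alpha}^{-1/2}\widehat{\varphi}(\alpha)Q_{\alpha}^{1/2}$; this is the endpoint $x=1$. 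The endpoint $x=0$ is the usual bound $\norm{\widehat{\varphi}}_{\ell^{\infty}}\leq\norm{\varphi}_{L^1}$. One then runs Stein interpolation on the analytic family $T_z(f)=Q^{-z/2}\widehat{f}\,Q^{z/2}$ to fill in $0<x<1$. Your endpoint observations about $\norm{u^{\alpha}_{ji}}_{L^{\infty}}$ and $\norm{(u^{\alpha}_{ji})^{*}}_{L^{\infty}}$ are the entrywise shadows of these two operator-norm facts, but the argument must be run at the level of the cb-norm (equivalently, the full matrix) to avoid the factor $n_{\alpha}$.
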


\begin{proof}
Note that the inequality \eqref{eq36} for the case $x=0$ is trivial, so let us focus on the other extremal case $x=1$ as the first step. Recall that 
\begin{itemize}
    \item $(Q_{\alpha}^{\frac{1}{2}}\otimes 1)(u^{\alpha})^c (Q_{\alpha}^{-\frac{1}{2}}\otimes 1)$ is unitary for any $\alpha\in \text{Irr}(\g)$,
    \item $\varphi:L^{\infty}(\g)\rightarrow \Comp$ is completely bounded with $\norm{\varphi}_{cb}=\norm{\varphi}_{L^1(\g)}$.
\end{itemize} 
Thus, we have
\begin{align}
&\norm{ Q_{\alpha}^{\frac{1}{2}} (\text{id}\otimes \varphi)\left ( (u^{\alpha})^c \right ) Q_{\alpha}^{-\frac{1}{2}}}_{M_{n_{\alpha}}}\\
&=\norm{(\text{id}\otimes \varphi)\left ( (Q_{\alpha}^{\frac{1}{2}}\otimes 1)(u^{\alpha})^c (Q_{\alpha}^{-\frac{1}{2}}\otimes 1) \right )}_{M_{n_{\alpha}}}\\
&\leq \norm{\varphi}_{cb}\cdot \norm{(Q_{\alpha}^{\frac{1}{2}}\otimes 1)(u^{\alpha})^c (Q_{\alpha}^{-\frac{1}{2}}\otimes 1)}_{M_{n_{\alpha}}\otimes_{min}L^{\infty}(\g)}=\norm{\varphi}_{L^1(\g)}.
\end{align}
Furthermore, $(\text{id}\otimes \varphi)\left ( (u^{\alpha})^c \right )$ is calculated as
\begin{equation}
 \sum_{i,j=1}^{n_{\alpha}} e_{ij}\cdot  \varphi\left ((u^{\alpha}_{ij})^*\right )=\sum_{i,j=1}^{n_{\alpha}} e_{ij}\cdot \widehat{\varphi}(\alpha)_{ji}=\widehat{\varphi}(\alpha)^t,
\end{equation}
so we can conclude that
\begin{align}
\norm{Q_{\alpha}^{-\frac{1}{2}}\widehat{\varphi}(\alpha) Q_{\alpha}^{\frac{1}{2}}}_{M_{n_{\alpha}}}= \norm{Q_{\alpha}^{\frac{1}{2}}\widehat{\varphi}(\alpha)^t Q_{\alpha}^{-\frac{1}{2}}}_{M_{n_{\alpha}}}\leq \norm{\varphi}_{L^1(\g)}  
\end{align}
for all $\alpha\in \text{Irr}(\g)$.

As the second step, let us interpolate the two extremal cases $x=0$ and $x=1$ as follows. Let us fix $\alpha\in \text{Irr}(\g)$ and consider linear operators $T_z:\text{Pol}(\g)\rightarrow \ell^{\infty}(\widehat{\g})$ given by
\begin{equation}
    T_z(f)=Q^{-\frac{z}{2}}\widehat{f} Q^{\frac{z}{2}},~z\in \Comp.
\end{equation}
Then, for any real number $s$ and $f\in \text{Pol}(\g)$, we have
\begin{align}
    \norm{T_{is}(f)}_{\ell^{\infty}(\widehat{\g})}&=\norm{Q^{-\frac{is}{2}}\widehat{f}Q^{\frac{is}{2} }}_{\ell^{\infty}(\widehat{\g})}=\norm{\widehat{f}}_{\ell^{\infty}(\widehat{\g})}\leq \norm{f}_{L^1(\g)}\\
    \norm{T_{1+is}(f)}_{\ell^{\infty}(\widehat{\g})}&=\norm{Q^{-\frac{1+is}{2}}\widehat{f}Q^{\frac{1+is}{2} }}_{\ell^{\infty}(\widehat{\g})}=\norm{Q^{-\frac{1}{2}}\widehat{f}Q^{\frac{1}{2}}}_{\ell^{\infty}(\widehat{\g})}\leq \norm{f}_{L^1(\g)}
\end{align}
by the extremal cases $x=0$ and $x=1$. Applying a general Stein interpolation theorem \cite[Theorem 2.1]{Voi92}, we obtain
\begin{equation}
    \norm{T_x(f)}_{\ell^{\infty}(\widehat{\g})}=\norm{Q^{-\frac{x}{2}}\widehat{f}Q^{\frac{x}{2}}}_{\ell^{\infty}(\widehat{\g})}\leq \norm{f}_{L^1(\g)}
\end{equation}
for all $x\in [0,1]$ and $f\in L^1(\g)$.

\end{proof}

As in the classical theory, let us apply the complex interpolation theorem between the above Theorem \ref{thm-main0} and the Plancherel identity to obtain the following {\it strong} Hausdorff-Young inequality.

\begin{theorem}\label{thm-main}
Let $\g$ be a general compact quantum group. If we suppose that $x\in I(\g,1)$ with a universal constant $K=K(\g,x)$ such that 
\begin{equation}\label{eq-assumption}
\norm{Q^{-\frac{x}{2}}\widehat{\varphi}Q^{\frac{x}{2}}}_{\ell^{\infty}(\widehat{\g})}\leq K \norm{\varphi}_{L^1(\g)},~\varphi\in L^1(\g),
\end{equation}
then we have $x\in I(\g,p)$ for any $1\leq p\leq 2$ with the following inequality
\begin{equation}
\norm{Q^{-(\frac{1}{p}-\frac{1}{2})x}\widehat{f}Q^{(\frac{1}{p}-\frac{1}{2})x}}_{\ell^{p'}(\widehat{\g})} \leq K^{\frac{2}{p}-1} \norm{f}_{L^p(\g)}, f\in L^p(\g).
\end{equation}
In particular, for general compact quantum group $\g$, we have
\begin{equation}\label{eq30.5}
 \sup_{0\leq x \leq 1} \norm{Q^{-(\frac{1}{p}-\frac{1}{2})x}\widehat{f}Q^{(\frac{1}{p}-\frac{1}{2})x}}_{\ell^{p'}(\widehat{\g})} \leq  \norm{f}_{L^p(\g)}
\end{equation}
for all $f\in L^p(\g)$ with $1\leq p\leq 2$.
\end{theorem}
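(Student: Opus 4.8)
The plan is to prove Theorem~\ref{thm-main} by complex interpolation in the sense of Stein, interpolating between the case $p=1$ (the hypothesis \eqref{eq-assumption}) and the case $p=2$ (the Plancherel identity), exactly as in the classical derivation of Hausdorff--Young from the $L^1$--$L^\infty$ and $L^2$--$L^2$ endpoints. First I would recall that at $p=2$ the twisted Fourier transform is isometric: since $\frac{1}{2}-\frac{1}{2}=0$, the twist disappears and $\mathcal{F}^{(x)}_2(f)=\widehat f$, and the Plancherel identity gives $\|\widehat f\|_{\ell^2(\widehat\g)}=\|f\|_{L^2(\g)}$ for all $f\in\mathrm{Pol}(\g)$ (this follows directly from the Schur orthogonality relations and the definition of the $\ell^2$-norm on $\widehat\g$). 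So the two endpoints are the bound \eqref{eq-assumption} with constant $K$ at $\theta=1$ (corresponding to $p=1$) and the isometry at $\theta=0$ (corresponding to $p=2$).

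The key technical point is to set up an analytic family of operators whose value at the interpolation parameter $\theta=\frac{2}{p}-1$ recovers $\mathcal{F}^{(x)}_p$ up to the correct normalization. Fixing $x$, I would define, for $z$ in the closed strip $\overline S$,
\begin{equation}
S_z(f)=Q^{-\frac{xz}{2}}\,\widehat f\,Q^{\frac{xz}{2}},\qquad f\in\mathrm{Pol}(\g),
\end{equation}
viewed as a map into the appropriate interpolation scale. On the line $\mathrm{Re}(z)=0$ the modular factors $Q^{\pm ixs/2}$ are unitaries commuting with the norm, so $\|S_{is}(f)\|_{\ell^2(\widehat\g)}=\|\widehat f\|_{\ell^2(\widehat\g)}=\|f\|_{L^2(\g)}$; on the line $\mathrm{Re}(z)=1$ one has $\|S_{1+is}(f)\|_{\ell^\infty(\widehat\g)}=\|Q^{-\frac x2}\widehat f Q^{\frac x2}\|_{\ell^\infty(\widehat\g)}\le K\|f\|_{L^1(\g)}$ by \eqref{eq-assumption}, again because the extra $Q^{\pm ixs/2}$ factors are unitary. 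The subtlety here is matching the interpolation of the target spaces: the scale $(\ell^\infty(\widehat\g),\ell^2(\widehat\g))_\theta=\ell^{p'}(\widehat\g)$ with $\frac{1}{p'}=\frac{1-\theta}{2}$, i.e. $\theta=\frac{2}{p}-1$, together with the scale $(L^1(\g),L^2(\g))_\theta=L^p(\g)$ with $\frac{1}{p}=1-\frac{\theta}{2}$, and the twist $x(\frac1p-\frac12)=\frac{x}{2}(1-\theta)=\frac{x}{2}\cdot\frac{2}{p'}$, so that $S_\theta=\mathcal{F}^{(x)}_p$ on the nose. I would invoke the Stein interpolation theorem \cite[Theorem 2.1]{Voi92} (the same reference already used in the proof of Theorem~\ref{thm-main0}) to conclude $\|S_\theta(f)\|_{\ell^{p'}(\widehat\g)}\le K^{1-0}\cdots$ — more precisely the interpolated constant is $K^\theta=K^{\frac{2}{p}-1}$, which is the claimed bound; density of $\mathrm{Pol}(\g)$ in $L^p(\g)$ then extends it to all $f\in L^p(\g)$.

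Finally, for the displayed inequality \eqref{eq30.5}: applying the first part with $x\in[0,1]$ and the universal constant $K=1$, which is legitimate by Theorem~\ref{thm-main0} (that theorem gives $\sup_{0\le x\le1}\|Q^{-x/2}\widehat\varphi Q^{x/2}\|_{\ell^\infty(\widehat\g)}\le\|\varphi\|_{L^1(\g)}$, so each such $x$ satisfies \eqref{eq-assumption} with $K=1$), yields $\|\mathcal{F}^{(x)}_p(f)\|_{\ell^{p'}(\widehat\g)}\le 1^{\frac2p-1}\|f\|_{L^p(\g)}=\|f\|_{L^p(\g)}$ for every $x\in[0,1]$ and $1\le p\le2$; taking the supremum over $x\in[0,1]$ gives \eqref{eq30.5}.

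I expect the main obstacle to be purely bookkeeping rather than conceptual: getting the three interpolation parameters (the $L^p$-scale, the $\ell^{p'}$-scale, and the exponent of the modular twist) to line up so that the analytic family evaluated at $\theta=\frac2p-1$ is \emph{exactly} $\mathcal{F}^{(x)}_p$, and verifying that $S_z$ genuinely satisfies the admissibility/growth hypotheses of the Stein interpolation theorem on the strip (boundedness of $z\mapsto S_z(f)$ for fixed $f\in\mathrm{Pol}(\g)$ is clear since $\widehat f$ is finitely supported and $Q$ acts blockwise, but one should note it). The use of the \emph{weighted} interpolation identity \eqref{eq20.7} from Subsection~\ref{sec-Lp} may be the cleanest way to identify $(\ell^\infty(\widehat\g),\ell^2(\widehat\g))_\theta$ with the twisted norm, so I would lean on that rather than redoing the computation by hand.
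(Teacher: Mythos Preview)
Your proposal is correct and follows essentially the same approach as the paper: complex interpolation between the $L^1\to\ell^\infty$ hypothesis \eqref{eq-assumption} and the Plancherel isometry at $p=2$, with endpoint constants $K$ and $1$ yielding $K^{2/p-1}$. The only cosmetic difference is that the paper interpolates the \emph{fixed} operator $\Phi_x(f)=Q^{-x/2}\widehat{f}\,Q^{x/2}$ between a weighted $\ell^2$-target $H_\alpha$ (with norm $\|A\|_{H_\alpha}=\|Q_\alpha^{x/2}AQ_\alpha^{(1-x)/2}\|_{S^2_{n_\alpha}}$) and the $\ell^\infty$-target, then identifies the interpolated norm via \eqref{eq20.7}, whereas you absorb the modular twist into the analytic family $S_z$ and use the standard $\ell^{p'}(\widehat\g)$-scale on the target side --- these are equivalent packagings of the same computation (note a small slip in your bookkeeping line: one has $x(\tfrac1p-\tfrac12)=\tfrac{x\theta}{2}$, not $\tfrac{x}{2}(1-\theta)$, though your conclusion $S_\theta=\mathcal{F}^{(x)}_p$ is correct).
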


\begin{proof}
For the given real number $x\in I(\g,1)$, let us consider a linear map $\Phi_x:\text{Pol}(\g)\rightarrow c_{00}(\widehat{\g})$ given by
\begin{equation}
   \Phi_x(f) = Q^{-\frac{x}{2}}\widehat{f}Q^{\frac{x}{2}},~f\in \text{Pol}(\g).
\end{equation}

For each $\alpha\in \text{Irr}(\g)$, let us denote by $H_{\alpha}$ the matrix algebra $M_{n_{\alpha}}(\Comp)$ with the following norm structure
\begin{equation}\label{eq31}
    \norm{A}_{H_{\alpha}}=\norm{Q_{\alpha}^{\frac{x}{2}}A Q_{\alpha}^{\frac{1-x}{2}}}_{S^2_{n_{\alpha}}},~A\in M_{n_{\alpha}}(\Comp).
\end{equation}
Then the Plancherel theorem tells us that
\begin{itemize}
    \item [(A)] $\Phi_x: L^2(\g)\rightarrow \ell^2-(\left\{H_{\alpha}\right\}_{\alpha\in \text{Irr}(\g)},\mu)$ is an onto isometry where a positive measure $\mu$ is given by $\mu(\alpha)=d_{\alpha}$ for all $\alpha\in \text{Irr}(\g)$.
\end{itemize}
Moreover, the given assumption can be understood as that
\begin{itemize}
    \item [(B)] $\Phi_x:L^1(\g)\rightarrow \ell^{\infty}-(\left\{M_{n_{\alpha}}\right\}_{\alpha\in \text{Irr}(\g)},\mu)$ is bounded with the norm less than or equal to $K$.
\end{itemize}

Then, the complex interpolation theorem with the above (A) and (B) implies that $\Phi_x$ extends to a bounded map
\[\Phi_x:L^p(\g)\rightarrow \ell^{p'}(\left\{X_{\alpha,p'}\right\}_{\alpha\in \text{Irr}(\g)},\mu)\]
with the norm less than or equal to $K^{\frac{2}{p}-1}$, where $X_{\alpha,p'}$ is the complex interpolation space $(M_{n_{\alpha}},H_{\alpha})_{\theta}$ for $\theta=\frac{2}{p'}$. Specifically, $X_{\alpha,p'}$ is the matrix algebra $M_{n_{\alpha}}(\Comp)$ with the following norm structure
\begin{equation}
    \norm{A}_{X_{\alpha,p'}}=\norm{Q_{\alpha}^{\frac{x}{p'}}A Q_{\alpha}^{\frac{1-x}{p'}}}_{S^{p'}_{n_{\alpha}}}
\end{equation}
by \eqref{eq20.7}. Hence, we can conclude that
\begin{equation}
 \left ( \sum_{\alpha\in \text{Irr}(\g)}d_{\alpha}\text{Tr} \left ( \left |Q_{\alpha}^{\frac{x}{p'}}\cdot Q_{\alpha}^{-\frac{x}{2}}\widehat{f}(\alpha) Q_{\alpha}^{\frac{x}{2}} \cdot Q_{\alpha}^{\frac{1-x}{p'}}\right |^{p'} \right ) \right )^{\frac{1}{p'}}  \leq K^{\frac{2}{p}-1} \norm{f}_{L^p(\g)}
\end{equation}
for all $f\in L^p(\g)$. The last conclusion for \eqref{eq30.5} follows from the above discussions and Theorem \ref{thm-main0}.
\end{proof}

Recall that the standard Hausdorff-Young inequality for $\mathcal{F}^{(0)}_p$ is given by
\begin{equation}\label{eq37}
   \norm{\mathcal{F}^{(0)}_p(f)}_{\ell^{p'}(\widehat{\g})}= \norm{\widehat{f}}_{\ell^{p'}(\widehat{\g})}\leq \norm{f}_{L^p(\g)}.
\end{equation}
We will demonstrate that our strong Hausdorff-Young inequality
\begin{equation}\label{eq37.5}
\sup_{0\leq x \leq 1} \norm{\mathcal{F}^{(x)}_p(f)}_{\ell^{p'}(\widehat{\g})}= \sup_{0\leq x \leq 1} \norm{Q^{-(\frac{1}{p}-\frac{1}{2})x}\widehat{f}Q^{(\frac{1}{p}-\frac{1}{2})x}}_{\ell^{p'}(\widehat{\g})} \leq  \norm{f}_{L^p(\g)}
\end{equation}
is strictly sharper than \eqref{eq37} for some concrete examples. To do this, let us describe a situation where the left-hand side of \eqref{eq37.5} is computable.

\begin{proposition}\label{prop31}
Let $1\leq p < 2$. For each $\alpha\in \text{Irr}(\g)$, let us take indices $i=i(\alpha),j=j(\alpha)$ such that $(Q_{\alpha})_{ii}\geq (Q_{\alpha})_{jj}$. Then, for any $f\sim \sum_{\alpha\in \text{Irr}(\g)} a_{\alpha}u^{\alpha}_{ij}\in L^p(\g)$ and $x\leq y$, we have
\begin{equation}
\norm{ \mathcal{F}^{(x)}_p(f)}_{\ell^{p'}(\widehat{\g})}\leq \norm{ \mathcal{F}^{(y)}_p(f)}_{\ell^{p'}(\widehat{\g})}.
\end{equation}
In particular, we have
\begin{align}
&\sup_{0\leq x\leq 1}\norm{\mathcal{F}^{(x)}_p(f)}_{\ell^{p'}(\widehat{\g})}=\norm{\mathcal{F}^{(1)}_p(f)}_{\ell^{p'}(\widehat{\g})}\leq \norm{f}_{L^p(\g)}.
\end{align}
and the left-hand side is computed as
\begin{align}
  &\norm{\mathcal{F}^{(1)}_p(f)}_{\ell^{p'}(\widehat{\g})}=\norm{Q^{-(\frac{1}{p}-\frac{1}{2})}\widehat{f}Q^{\frac{1}{p}-\frac{1}{2}}}_{\ell^{p'}(\widehat{\g})}\\
  &= \left\{\begin{array}{lll}
  \left ( \sum_{\alpha\in \text{Irr}(\g)} |a_{\alpha}|^{p'} \left [ \frac{(Q_{\alpha})_{ii}^{-1}}{d_{\alpha}} \right ]^{p'-1} \left [ (Q_{\alpha})_{jj}^{-1}(Q_{\alpha})_{ii} \right ]^{\frac{p'}{2}-1} \right )^{\frac{1}{p'}}&,~1<p<2\\
 \sup_{\alpha\in \text{Irr}(\g)} \frac{|a_{\alpha}| }{d_{\alpha}}  [(Q_{\alpha})_{jj}(Q_{\alpha})_{ii}]^{-\frac{1}{2}} &,~p=1. \end{array} \right .
\end{align}
    
\end{proposition}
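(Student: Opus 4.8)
The plan is to reduce everything to the explicit norm formulas of Lemma \ref{lem30} together with the strong Hausdorff--Young inequality \eqref{eq30.5}. Given $f\sim \sum_{\alpha\in\Irr(\g)} a_\alpha u^{\alpha}_{i(\alpha)j(\alpha)}$, I would first, for each $\alpha$, extend the single assignment $i(\alpha)\mapsto j(\alpha)$ to a bijection $\tau_\alpha$ of $\{1,\dots,n_\alpha\}$ (the transposition of $i(\alpha)$ and $j(\alpha)$, or the identity when $i(\alpha)=j(\alpha)$), and put $y^{\alpha}_k=a_\alpha$ for $k=i(\alpha)$ and $y^{\alpha}_k=0$ otherwise. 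Then $f=\sum_\alpha\sum_k y^\alpha_k u^\alpha_{k\tau_\alpha(k)}$ has exactly the form required by Lemma \ref{lem30}. (When $f\in L^p(\g)$ is not a polynomial one runs this on finite truncations and passes to the limit; since all the expressions below are non-negative series/suprema depending only on $\widehat f$, nothing is lost.)

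With this data, Lemma \ref{lem30} will yield, for $1<p<2$,
\begin{equation*}
\norm{\mathcal{F}^{(x)}_p(f)}_{\ell^{p'}(\widehat{\g})}
=\left(\sum_{\alpha\in\Irr(\g)} \left[(Q_\alpha)_{j(\alpha)j(\alpha)}^{-1}(Q_\alpha)_{i(\alpha)i(\alpha)}\right]^{(\frac{p'}{2}-1)x}\left[\frac{(Q_\alpha)_{i(\alpha)i(\alpha)}^{-1}}{d_\alpha}\right]^{p'-1}|a_\alpha|^{p'}\right)^{\frac{1}{p'}},
\end{equation*}
and, for $p=1$,
\begin{equation*}
\norm{\mathcal{F}^{(x)}_1(f)}_{\ell^{\infty}(\widehat{\g})}=\sup_{\alpha\in\Irr(\g)}\left[(Q_\alpha)_{j(\alpha)j(\alpha)}^{-1}(Q_\alpha)_{i(\alpha)i(\alpha)}\right]^{\frac{x}{2}}\frac{(Q_\alpha)_{i(\alpha)i(\alpha)}^{-1}}{d_\alpha}|a_\alpha|.
\end{equation*}
The hypothesis $(Q_\alpha)_{i(\alpha)i(\alpha)}\geq(Q_\alpha)_{j(\alpha)j(\alpha)}$ is precisely the statement that the base $(Q_\alpha)_{j(\alpha)j(\alpha)}^{-1}(Q_\alpha)_{i(\alpha)i(\alpha)}$ is $\geq 1$ for every $\alpha$. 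Since $1<p<2$ forces $p'>2$, hence $\tfrac{p'}{2}-1>0$ (and $\tfrac12>0$ for $p=1$), the only $x$-dependent factors above are of the form $t^{cx}$ with $t\geq 1$, $c>0$, which are non-decreasing in $x$; all other factors are non-negative and $x$-independent. Consequently each summand, resp. each term under the supremum, is non-decreasing in $x$, and therefore so is $x\mapsto\norm{\mathcal{F}^{(x)}_p(f)}_{\ell^{p'}(\widehat{\g})}$. This gives the desired inequality $\norm{\mathcal{F}^{(x)}_p(f)}_{\ell^{p'}(\widehat{\g})}\leq \norm{\mathcal{F}^{(y)}_p(f)}_{\ell^{p'}(\widehat{\g})}$ for $x\leq y$.

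The remaining claims are then immediate. By the monotonicity just established, $\sup_{0\leq x\leq 1}\norm{\mathcal{F}^{(x)}_p(f)}_{\ell^{p'}(\widehat{\g})}=\norm{\mathcal{F}^{(1)}_p(f)}_{\ell^{p'}(\widehat{\g})}$, and this is $\leq\norm{f}_{L^p(\g)}$ directly from \eqref{eq30.5} of Theorem \ref{thm-main}, since $\mathcal{F}^{(1)}_p$ is one of the maps occurring in that supremum. The closed-form expression is obtained by substituting $x=1$ into the two displays; in the $p=1$ case one further simplifies $\left[(Q_\alpha)_{j(\alpha)j(\alpha)}^{-1}(Q_\alpha)_{i(\alpha)i(\alpha)}\right]^{1/2}(Q_\alpha)_{i(\alpha)i(\alpha)}^{-1}=\left[(Q_\alpha)_{j(\alpha)j(\alpha)}(Q_\alpha)_{i(\alpha)i(\alpha)}\right]^{-1/2}$, and in the case $1<p<2$ one only rearranges the factors.

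I do not expect a real obstacle here: the proposition is essentially a bookkeeping consequence of Lemma \ref{lem30}, the elementary fact that $t\mapsto t^{c}$ is increasing on $[1,\infty)$ when $c>0$, and Theorem \ref{thm-main}. The only two points that deserve a line of care are (i) checking that the chosen $(\tau_\alpha,y^\alpha)$ genuinely meet the bijectivity hypothesis of Lemma \ref{lem30}, and (ii) if one wants the statement for all $f\in L^p(\g)$ and not merely for $f\in\Pol(\g)$, justifying the (routine) passage to the limit in the monotone non-negative series.
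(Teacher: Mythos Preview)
Your proposal is correct and follows essentially the same route as the paper: apply Lemma \ref{lem30} to obtain the explicit $\ell^{p'}$-norms, observe that the only $x$-dependent factor has base $(Q_\alpha)_{jj}^{-1}(Q_\alpha)_{ii}\geq 1$ so the expression is non-decreasing in $x$, and then invoke Theorem \ref{thm-main} for the bound by $\norm{f}_{L^p(\g)}$. Your write-up is in fact slightly more explicit than the paper's, in that you spell out how to choose the bijection $\tau_\alpha$ and the coefficients $y^{\alpha}_k$ needed to feed $f$ into Lemma \ref{lem30}, and you note the routine approximation step for non-polynomial $f$.
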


\begin{proof}

By Lemma \ref{lem30}, we have
\begin{align}
   &\norm{\mathcal{F}^{(x)}_p(f)}_{\ell^{p'}(\widehat{\g})}\\
\label{eq38}   & = \left\{\begin{array}{ll} \left ( \sum_{\alpha\in \text{Irr}(\g)}   \left [ (Q_{\alpha})_{jj}^{-1} (Q_{\alpha})_{ii} \right ]^{(\frac{p'}{2}-1)x} \left [\frac{(Q_{\alpha})_{ii}^{-1}}{ d_{\alpha} } \right ]^{p'-1} \left |a_{\alpha} \right |^{p'} \right )^{\frac{1}{p'}}&,~1<p<2\\
   \sup_{\alpha\in \text{Irr}(\g)} \left [ (Q_{\alpha})_{jj}^{-1}(Q_{\alpha})_{ii} \right ]^{\frac{x}{2}}\cdot \frac{(Q_{\alpha})_{ii}^{-1}}{d_{\alpha}}|a_{\alpha}|& p=1.
   \end{array} \right .
\end{align}
Then the given assumption $(Q_{\alpha})_{jj}^{-1}(Q_{\alpha})_{ii}\geq 1$ implies that the $p'$-norms on \eqref{eq38} increase for $x$, so the supremum
\begin{equation}
    \sup_{0\leq x\leq 1}\norm{\mathcal{F}^{(x)}_p(f)}_{\ell^{p'}(\widehat{\g})}
\end{equation}
is attained when $x=1$.

\end{proof}

\begin{example}
   Let $\g=SU_q(2)$ with $0<q<1$. Then $\text{Irr}(SU_q(2))$ is identified with $\left\{u^{(k)}\right\}_{k\in \n_0}$, and the fundamental representation $u^{(1)}$ is given by
   \begin{equation}
       u^{(1)}=\left [ \begin{array}{cc}
       a&-qc^*\\ c&a^*
       \end{array} \right ]
   \end{equation}
   where $a,c$ are the standard generators of $\text{Pol}(SU_q(2))$. Note that $u^{(k)}\in M_{k+1}(\Comp)\otimes L^{\infty}(\g)$ with $u^{(k)}_{0k}=(-qc^*)^k$. Let us consider 
    \begin{equation}
    f\sim \sum_{k=0}^{\infty} q^{-k}x_k u^{(k)}_{0k}=\sum_{k=0}^{\infty}x_k (-c^*)^k\in L^p(\g)
    \end{equation}
    whose Fourier transform is given by
    \begin{equation}
        \widehat{f}=\sum_{k=0}^{\infty}\frac{q^{-k}x_k}{d_k}E^{(k)}_{k0} Q_k^{-1}=\sum_{k=0}^{\infty}\frac{x_k}{d_k}E^{(k)}_{k0}.
    \end{equation}
    Then the standard Hausdorff-Young inequality implies
\begin{align}
\left\{\begin{array}{ll}
&\displaystyle \sup_{k\in \n_0}\frac{|x_k|}{d_k} = \norm{\mathcal{F}^{(0)}_1(f)}_{\ell^{\infty}(\widehat{\g})} \leq \norm{f}_{L^1(\g)},~p=1\\
&\displaystyle \left ( \sum_{k=0}^{\infty} \frac{q^{-k}}{d_k^{p'-1}} |x_k|^{p'}  \right )^{\frac{1}{p'}},
     = \norm{\mathcal{F}^{(0)}_p(f)}_{\ell^{p'}(\widehat{\g})}\leq \norm{f}_{L^p(\g)},~1<p<2,
     \end{array} \right .
\end{align}
whereas our strong Hausdorff-Young inequality induces a stronger inequality
\begin{align}
\label{eq30.6} \left\{\begin{array}{ll}
&\displaystyle \sup_{k\in \n_0}\frac{q^{-k}|x_k|}{d_k} = \norm{\mathcal{F}^{(1)}_1(f)}_{\ell^{\infty}(\widehat{\g})} \leq \norm{f}_{L^1(\g)},~p=1\\
&\displaystyle \left ( \sum_{k=0}^{\infty} \frac{q^{-k(p'-1)}}{d_k^{p'-1}} |x_k|^{p'}  \right )^{\frac{1}{p'}} =  \norm{\mathcal{F}^{(1)}_p(f)}_{\ell^{p'}(\widehat{\g})}\leq\norm{f}_{L^p(\g)},~1<p<2,
     \end{array} \right .
\end{align}

Additionally, we obtain
\begin{equation}\label{eq32}
 \left\{\begin{array}{ll} 
 & \displaystyle  (1-q^2) \sup_{k\in \n_0}|x_k| \leq \norm{f}_{L^1(\g)} ,~p=1,\\
 &\displaystyle  (1-q^2)^{\frac{1}{p}}  \left ( \sum_{k=0}^{\infty}  |x_k|^{p'}  \right )^{\frac{1}{p'}} \leq\norm{f}_{L^p(\g)},~1<p<2,
 \end{array} \right .
\end{equation}
thanks to \eqref{eq30.6} and the following inequality $1-q^2\leq \frac{q^{-k}}{d_k}$.
%On the other hand, let us consider
%\begin{equation}
%    g\sim \sum_{k=0}^{\infty}y_k u^{(k)}_{k0}=\sum_{k=0}^{\infty}y_k c^k\in L^p(SU_q(2)).
%\end{equation}
%Then, by combining the above \eqref{eq32} and \cite[Lemma 3.4 (c)]{Wa17}, we obtain
%\begin{equation}
%  (1-q^2)^{\frac{1}{p}}\left ( \sum_{k=0}^{\infty}|y_k|^{p'} \right )^{\frac{1}{p'}} \leq \norm{\sigma_{-\frac{i}{p}}(g^*)}_{L^p(\g)} = \norm{g}_{L^p(\g)}
%\end{equation}
%since 
%\begin{align}
%    \sigma_{-\frac{i}{p}}(g^*)&\sim \sum_{k=0}^{\infty}y_k \sigma_{-\frac{i}{p}}((c^*)^k)=\sum_{k=0}^{\infty} (-q)^{-k}y_k (c^*)^k
%\end{align}

\end{example}

\section{Application to twisted rapid decay properties}\label{sec-RD}

We begin this section by explaining how the standard Hausdorff-Young inequality relates to the twisted rapid decay property introduced in \cite{BVZ}. Our approach consists of two steps. 
\begin{itemize}
    \item [{\bf [Step 1]}] The first step is to formulate the dual form of the Hausdorff-Young inequality.
    \item [{\bf [Step 2]}] The second step is to prove that this dualized inequality implies the twisted property RD under a polynomial growth on $\widehat{\g}$.
\end{itemize}

Specifically, at {\bf [Step 1]}, the dual form of the standard Hausdorff-Young inequality $\norm{\widehat{\varphi}}_{\ell^{\infty}(\widehat{\g})} \leq  \norm{\varphi}_{L^1(\g)}$ is given by
\begin{equation}\label{eq41}
    \norm{f}_{L^{\infty}(\g)}\leq \norm{\widehat{f}}_{\ell^1(\widehat{\g})}=\sum_{\alpha\in \text{Irr}(\g)} d_{\alpha}\text{Tr}\left ( \left | \widehat{f}(\alpha)Q_{\alpha} \right | \right ),~f\in \text{Pol}(\g).
\end{equation}
Then, for {\bf [Step 2]}, thanks to a basic fact $\text{Tr}(|AB|)\leq \norm{A}_{S^2_n}\norm{B}_{S^2_n}$ on \eqref{eq41}, we obtain
\begin{align}
   \norm{f}_{L^{\infty}(\g)}&\leq \sum_{\alpha\in \text{Irr}(\g)} d_{\alpha}\norm{\widehat{f}(\alpha)Q_{\alpha}}_{S^2_{n_{\alpha}}}\norm{\text{Id}_{n_{\alpha}}}_{S^2_{n_{\alpha}}}\\
  \label{eq42} &= \sum_{\alpha\in \text{Irr}(\g)} d_{\alpha} \norm{\widehat{f}(\alpha)Q_{\alpha}}_{S^2_{n_{\alpha}}}\cdot \sqrt{n_{\alpha}}\\
  &=\sum_{\alpha\in \text{Irr}(\g)}\sqrt{d_{\alpha}}\norm{\widehat{f}(\alpha)\cdot \sqrt{\frac{d_{\alpha}}{n_{\alpha}}} Q_{\alpha}^{\frac{1}{2}}\cdot Q_{\alpha}^{\frac{1}{2}}}_{S^2_{n_{\alpha}}}\cdot n_{\alpha}.
\end{align}

Let us suppose that $\g$ is a matrix quantum group, and denote by $|\cdot|$ the natural length function on $\text{Irr}(\g)$ and by
\begin{equation}
   \text{Pol}_k(\g)=\text{span}\left\{u^{\alpha}_{ij}: |\alpha|=k, 1\leq i,j\leq n_{\alpha}\right\} .
\end{equation}
Then, for any $f\in \text{Pol}_k(\g)$, it is straightforward to see that  
\begin{align}
   &\norm{f}_{L^{\infty}(\g)} \leq \sum_{\alpha\in \text{Irr}(\g): |\alpha|=k}\sqrt{d_{\alpha}}\norm{\widehat{f}(\alpha)\cdot \sqrt{\frac{d_{\alpha}}{n_{\alpha}}} Q_{\alpha}^{\frac{1}{2}}\cdot Q_{\alpha}^{\frac{1}{2}}}_{S^2_{n_{\alpha}}}\cdot n_{\alpha}\\
 \label{eq43} &\leq  \left ( \sum_{\alpha\in \text{Irr}(\g): |\alpha|=k} d_{\alpha}\cdot \norm{\widehat{f}(\alpha)\cdot \sqrt{\frac{d_{\alpha}}{n_{\alpha}}} Q_{\alpha}^{\frac{1}{2}}\cdot Q_{\alpha}^{\frac{1}{2}}}_{S^2_{n_{\alpha}}}^2  \right )^{\frac{1}{2}}\cdot \left ( \sum_{\alpha\in \text{Irr}(\g): |\alpha|=k}n_{\alpha}^2 \right )^{\frac{1}{2}}.
\end{align}
Let us write $D=(\frac{d_{\alpha}}{n_{\alpha}}Q_{\alpha})_{\alpha\in \text{Irr}(\g)}$ and assume the existence of a polynomial $P$ satisfying
\begin{equation}
    \sum_{\alpha\in \text{Irr}(\g): |\alpha|=k}n_{\alpha}^2 \leq P(k)^2,~k\in \n_0.
\end{equation}
Then the above \eqref{eq43} implies
\begin{equation}\label{eq48}
    \norm{f}_{L^{\infty}(\g)}\leq P(k)\cdot \norm{\widehat{f}D^{\frac{1}{2}}}_{\ell^2(\widehat{\g})},~f\in \text{Pol}_k(\g),
\end{equation}
hence we can recover \cite[Proposition 4.2 (a)]{BVZ}. We call \eqref{eq48} the {\it twisted property RD}.

The main goal of this Section is to establish a stronger form of the twisted property RD. While the preceding discussion relies only on the standard Hausdorff-Young inequality, we now employ our strong Hausdorff-Young inequality and revisit {\bf [Step 1]} and {\bf [Step 2]}. For {\bf [Step 1]}, we begin with the following lemma, which enables us to derive the dual formulation of Theorem \ref{thm-main}.

\begin{lemma}\label{lem40}
Let $p\in [1,\infty]$ and $x\in \re$. Then we have
\begin{equation}
   h(f^*g)=\widehat{h} \left ( \mathcal{F}^{(x)}_{p'}(f)^*\mathcal{F}^{(x)}_p(g) \right )
\end{equation}
for all $f,g\in \text{Pol}(\g)$.
\end{lemma}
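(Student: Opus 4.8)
The plan is to reduce the twisted identity to the ordinary Parseval identity $h(f^*g)=\widehat h(\widehat f{}^*\widehat g)$, exploiting that the twist in $\mathcal F^{(x)}$ is implemented by conjugation with a power of $Q$, which is invisible to the tracial weight $\widehat h$.

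First I would record the elementary exponent identity $\bigl(\frac1{p'}-\frac12\bigr)x=-\bigl(\frac1p-\frac12\bigr)x$. Writing $c=\bigl(\frac1p-\frac12\bigr)x\in\re$, this gives $\mathcal F^{(x)}_{p'}(f)=Q^{c}\widehat f\,Q^{-c}$ and $\mathcal F^{(x)}_{p}(g)=Q^{-c}\widehat g\,Q^{c}$. Since each $Q_\alpha$ is positive and $c$ is real, $Q^{c}$ is self-adjoint, so $\mathcal F^{(x)}_{p'}(f)^*=Q^{-c}\widehat f{}^*Q^{c}$; multiplying the two expressions, the central factors $Q^{c}Q^{-c}$ cancel and
\[
\mathcal F^{(x)}_{p'}(f)^*\,\mathcal F^{(x)}_{p}(g)=Q^{-c}\,\widehat f{}^*\widehat g\,Q^{c}.
\]
Next I would apply $\widehat h(B)=\sum_\alpha d_\alpha\Tr\bigl(B(\alpha)Q_\alpha\bigr)$, using cyclicity of the trace and that $Q_\alpha^{c}$ commutes with $Q_\alpha$, to get $\widehat h\bigl(Q^{-c}\widehat f{}^*\widehat g\,Q^{c}\bigr)=\sum_\alpha d_\alpha\Tr\bigl(\widehat f(\alpha)^*\widehat g(\alpha)Q_\alpha\bigr)=\widehat h(\widehat f{}^*\widehat g)$. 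This reduces the claim to the $x$-independent and $p$-independent identity $h(f^*g)=\widehat h(\widehat f{}^*\widehat g)$.

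Finally I would establish that untwisted Parseval identity. One option is to invoke the Plancherel theorem — the isometry $\|\widehat{\,\cdot\,}\|_{\ell^2(\widehat\g)}=\|\cdot\|_{L^2(\g)}$ already used in the proof of Theorem \ref{thm-main} — and polarize. Alternatively one gives a direct computation: expand $f,g\in\Pol(\g)$ via the Fourier inversion formula $f=\sum_\alpha\sum_{ij}d_\alpha(\widehat f(\alpha)Q_\alpha)_{ij}u^\alpha_{ji}$, apply the Schur orthogonality relation $h\bigl((u^\beta_{kl})^*u^\alpha_{ij}\bigr)=\delta_{\alpha\beta}\delta_{ik}\delta_{jl}(Q_\alpha)_{ii}^{-1}/d_\alpha$, and collect surviving terms; using that $Q_\alpha$ is diagonal, the result simplifies to $\sum_\alpha d_\alpha\sum_{k,l}\overline{\widehat f(\alpha)_{kl}}\,\widehat g(\alpha)_{kl}(Q_\alpha)_{ll}$, which is precisely $\sum_\alpha d_\alpha\Tr\bigl(\widehat f(\alpha)^*\widehat g(\alpha)Q_\alpha\bigr)=\widehat h(\widehat f{}^*\widehat g)$.

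I do not expect a genuine obstacle: the only points requiring care are the sign bookkeeping of the $Q$-exponents (the passage from $p$ to $p'$) and, if one opts for the direct route, the index bookkeeping in the Schur orthogonality computation. The conceptual content is simply that $\mathcal F^{(x)}$ differs from the standard Fourier transform by conjugation with $Q^{\pm c}$, a modification that the trace-like weight $\widehat h$ does not see.
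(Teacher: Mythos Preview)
Your proof is correct and follows essentially the same route as the paper: compute the product $\mathcal{F}^{(x)}_{p'}(f)^*\mathcal{F}^{(x)}_p(g)=Q^{-c}\widehat f{}^*\widehat g\,Q^{c}$ using the exponent relation between $p$ and $p'$, then use cyclicity of the trace and commutation with $Q_\alpha$ to reduce to $\widehat h(\widehat f{}^*\widehat g)=h(f^*g)$. Your introduction of the shorthand $c=(\tfrac1p-\tfrac12)x$ and your explicit justification of the untwisted Parseval identity (which the paper simply asserts) make the argument slightly more transparent, but the substance is identical.
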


\begin{proof}
First of all, we have
\begin{align}
&\mathcal{F}^{(x)}_{p'}(f)^*\mathcal{F}^{(x)}_p(g)\\
&=\left [ Q^{-(\frac{1}{p'}-\frac{1}{2})x} \widehat{f} Q^{(\frac{1}{p'}-\frac{1}{2})x} \right ]^*\cdot \left [ Q^{-(\frac{1}{p}-\frac{1}{2})x} \widehat{g} Q^{(\frac{1}{p}-\frac{1}{2})x} \right ]  \\
&= Q^{(\frac{1}{p'}-\frac{1}{2})x} \widehat{f}^* Q^{-(\frac{1}{p'}-\frac{1}{2})x}\cdot Q^{-(\frac{1}{p}-\frac{1}{2})x} \widehat{g} Q^{(\frac{1}{p}-\frac{1}{2})x} \\  
&= Q^{(\frac{1}{p'}-\frac{1}{2})x} \widehat{f}^* \widehat{g} Q^{(\frac{1}{p}-\frac{1}{2})x}
\end{align}
by definition of the twisted Fourier transform, and this implies
\begin{align}
&\text{Tr}\left ( \left [ \mathcal{F}^{(x)}_{p'}(f)^*\mathcal{F}^{(x)}_p(g) \right ](\alpha) \cdot Q_{\alpha} \right )\\
&=\text{Tr}\left ( Q_{\alpha}^{(\frac{1}{p'}-\frac{1}{2})x} \widehat{f}(\alpha)^* \widehat{g}(\alpha) Q_{\alpha}^{(\frac{1}{p}-\frac{1}{2})x}\cdot Q_{\alpha} \right )=\text{Tr}(\widehat{f}(\alpha)^*\widehat{g}(\alpha)Q_{\alpha}).
\end{align}
for each $\alpha\in \text{Irr}(\g)$. Thus, we can conclude that 
\begin{align}
   & \widehat{h}\left (  \mathcal{F}^{(x)}_{p'}(f)^*\mathcal{F}^{(x)}_p(g) \right ) =\sum_{\alpha\in \text{Irr}(\g)} d_{\alpha} \text{Tr}(\widehat{f}(\alpha)^*\widehat{g}(\alpha)Q_{\alpha}) =h(f^*g).
\end{align}
\end{proof}

Now, let us exhibit a duality relation between the twisted Fourier transforms $\mathcal{F}^{(x)}_p$ and $\mathcal{F}^{(x)}_{p'}$ in the following sense:

\begin{proposition}\label{prop-duality}
Let $p\in [1,2)$ and $x\in \re$. Then the following are equivalent.
\begin{enumerate}
\item $\norm{\mathcal{F}^{(x)}_{p}(f)}_{\ell^{p'}(\widehat{\g})}\leq K\norm{f}_{L^p(\g)}$ for all $f\in \text{Pol}(\g)$.
\item $\norm{f}_{L^{p'}(\g)}\leq K\cdot \norm{\mathcal{F}^{(x)}_{p'}(f)}_{\ell^p(\widehat{\g})}$ for all $f\in \text{Pol}(\g)$.
\end{enumerate}

%Then we have
%\begin{equation}
%    \la \mathcal{F}^{(x)}_p(f),\mathcal{F}^{(x)}_{p'}(g)\ra_{\ell^{p'}(\widehat{\g}),\ell^p(\widehat{\g})}=\la f , g\ra_{L^p(\g),L^{p'}(\g)}
%\end{equation}
%for all $f,g\in \text{Pol}(\g)$. In particular, the adjoint map of 
%\begin{equation}
%\mathcal{F}^{(x)}_p:(\text{Pol}(\g),\norm{\cdot}_{L^p(\g)})\rightarrow (c_{00}(\widehat{\g}),\norm{\cdot}_{\ell^{p'}(\widehat{\g})})    
%\end{equation}
%is precisely the inverse map 
%\begin{equation}
%    \left (\mathcal{F}^{(x)}_{p'}\right )^{-1}:(c_{00}(\widehat{\g}),\norm{\cdot}_{\ell^{p}(\widehat{\g})})   \rightarrow (\text{Pol}(\g),\norm{\cdot}_{L^{p'}(\g)}).
%\end{equation}

\end{proposition}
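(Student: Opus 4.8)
The plan is to use the pairing identity from Lemma \ref{lem40} together with the duality formula \eqref{eq21} for the $L^{p'}$-norm and \eqref{eq22} for the $\ell^p$-norm. The key observation is that Lemma \ref{lem40} says $h(f^*g) = \widehat{h}(\mathcal{F}^{(x)}_{p'}(f)^*\mathcal{F}^{(x)}_p(g))$, which exactly converts a pairing in the $L$-picture into a pairing in the $\ell$-picture, with the two twisted Fourier transforms $\mathcal{F}^{(x)}_{p'}$ and $\mathcal{F}^{(x)}_p$ sitting on the two sides. Since both $\mathcal{F}^{(x)}_p$ and $\mathcal{F}^{(x)}_{p'}$ are (by Lemma \ref{lem40} applied with $g$ ranging over a dense set, or by direct inspection) injective linear bijections from $\text{Pol}(\g)$ onto $c_{00}(\widehat{\g})$ — the inverse being $B\mapsto$ the polynomial with $\widehat{f}=Q^{(\frac1p-\frac12)x}BQ^{-(\frac1p-\frac12)x}$ — the pairing can be transported freely between the two sides.

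First I would prove $(1)\Rightarrow(2)$. Fix $f\in\text{Pol}(\g)$. Using \eqref{eq21} with exponent $p'$ (so that the dual norm is computed against $L^{p'/(p'-1)}=L^{p}$-unit vectors... here I must be careful with which conjugate index: \eqref{eq21} expresses $\|f\|_{L^{p'}(\g)} = \sup\{|h(f^*g)| : \|g\|_{L^{p}(\g)}\le 1, g\in\text{Pol}(\g)\}$ after matching indices appropriately), and then rewriting $h(f^*g) = \widehat{h}(\mathcal{F}^{(x)}_{p'}(f)^*\mathcal{F}^{(x)}_p(g))$ via Lemma \ref{lem40}. Now apply Hölder's inequality for the dual pairing on $\widehat{\g}$ — i.e. \eqref{eq22} — to bound $|\widehat{h}(\mathcal{F}^{(x)}_{p'}(f)^*\mathcal{F}^{(x)}_p(g))| \le \|\mathcal{F}^{(x)}_{p'}(f)\|_{\ell^p(\widehat{\g})}\cdot\|\mathcal{F}^{(x)}_p(g)\|_{\ell^{p'}(\widehat{\g})}$, and finally use hypothesis (1) to bound $\|\mathcal{F}^{(x)}_p(g)\|_{\ell^{p'}(\widehat{\g})}\le K\|g\|_{L^p(\g)}\le K$. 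Taking the supremum over such $g$ gives $\|f\|_{L^{p'}(\g)} \le K\|\mathcal{F}^{(x)}_{p'}(f)\|_{\ell^p(\widehat{\g})}$, which is (2).

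For $(2)\Rightarrow(1)$ I would run the same argument in reverse, starting from \eqref{eq22} to express $\|\mathcal{F}^{(x)}_p(g)\|_{\ell^{p'}(\widehat{\g})}$ as a supremum of $|\widehat{h}(\mathcal{F}^{(x)}_p(g)^* B)|$ over $B\in c_{00}(\widehat{\g})$ with $\|B\|_{\ell^p(\widehat{\g})}\le 1$. Since $\mathcal{F}^{(x)}_{p'}$ maps $\text{Pol}(\g)$ onto $c_{00}(\widehat{\g})$, write $B = \mathcal{F}^{(x)}_{p'}(f)$ for a unique $f\in\text{Pol}(\g)$; then $\widehat{h}(\mathcal{F}^{(x)}_p(g)^*\mathcal{F}^{(x)}_{p'}(f)) = \overline{\widehat{h}(\mathcal{F}^{(x)}_{p'}(f)^*\mathcal{F}^{(x)}_p(g))} = \overline{h(f^*g)}$ by Lemma \ref{lem40}, which is bounded by $\|f\|_{L^{p'}(\g)}\|g\|_{L^p(\g)}$ by Hölder \eqref{eq21}, and $\|f\|_{L^{p'}(\g)}\le K\|\mathcal{F}^{(x)}_{p'}(f)\|_{\ell^p(\widehat{\g})} = K\|B\|_{\ell^p(\widehat{\g})}\le K$ by (2). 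Taking the supremum over $B$ yields (1).

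The main obstacle — really the only place requiring care — is bookkeeping with the conjugate indices and the modular twists: one must check that the duality formulas \eqref{eq21} and \eqref{eq22} are being invoked with exactly the right pair $(p,p')$ on each side, and that the factors $\sigma_{\pm i/p'}$ hidden in those dual pairings are consistent with the statement $h(f^*g)=\widehat{h}(\mathcal{F}^{(x)}_{p'}(f)^*\mathcal{F}^{(x)}_p(g))$ of Lemma \ref{lem40} (in particular that Lemma \ref{lem40} already encodes the correct modular placement so that no extra twist is introduced). It is also worth noting explicitly that $\text{Pol}(\g)$ is dense in $L^p(\g)$ and $L^{p'}(\g)$ and that $c_{00}(\widehat\g)$ is dense in $\ell^p(\widehat\g)$, so the suprema over polynomials/finitely-supported elements compute the full norms; this justifies passing between the ``for all $f\in\text{Pol}(\g)$'' statements and genuine operator-norm bounds. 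Beyond these routine points the equivalence is a formal consequence of Lemma \ref{lem40}.
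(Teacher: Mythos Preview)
Your proposal is correct and follows essentially the same approach as the paper: both use the bijectivity of $\mathcal{F}^{(x)}_p,\mathcal{F}^{(x)}_{p'}:\text{Pol}(\g)\to c_{00}(\widehat{\g})$, the duality formulas \eqref{eq21} and \eqref{eq22}, and the pairing identity of Lemma~\ref{lem40} to transport the supremum defining one norm into an expression controlled by the other. The only cosmetic difference is that the paper phrases the bound via a set inclusion of constraint sets (e.g.\ $\{A:\|(\mathcal{F}^{(x)}_p)^{-1}(A)\|_{L^p}\le 1\}\subseteq\{A:\|A\|_{\ell^{p'}}\le K\}$) whereas you invoke H{\"o}lder directly; these are equivalent formulations of the same step.
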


\begin{proof}

Since the twisted Fourier transforms $\mathcal{F}^{(x)}_p,\mathcal{F}^{(x)}_{p'}:\text{Pol}(\g)\rightarrow c_{00}(\widehat{\g})$ are bijective, we have 
\begin{align}
 \norm{f}_{L^{p'}(\g)}&=\sup_{A\in c_{00}(\g): \norm{\left ( \mathcal{F}^{(x)}_p\right )^{-1}(A)}_{L^p(\g)}\leq 1}h(f^*\cdot \left ( \mathcal{F}^{(x)}_p\right )^{-1}(A))\\
 \label{eq44} &=\sup_{A\in c_{00}(\g): \norm{\left ( \mathcal{F}^{(x)}_{p'}\right )^{-1}(A)}_{L^p(\g)}\leq 1}h( \mathcal{F}^{(x)}_{p'}(f)^*\cdot A)
 \end{align}
by \eqref{eq21} and Lemma \ref{lem40}, and 
 \begin{align}
 \norm{A}_{\ell^{p'}(\g)}&=\sup_{g\in \text{Pol}(\g): \norm{\mathcal{F}^{(x)}_{p'}(g)}_{\ell^p(\widehat{\g})}\leq 1}\widehat{h}\left ( A^* \cdot \mathcal{F}^{(x)}_{p'}(g) \right )\\
 \label{eq45} &=\sup_{g\in \text{Pol}(\g): \norm{\mathcal{F}^{(x)}_{p'}(g)}_{\ell^p(\widehat{\g})}\leq 1}\widehat{h}\left ( \left ( \mathcal{F}^{(x)}_{p}\right )^{-1}(A)^* \cdot g \right )
\end{align}
by \eqref{eq22} and Lemma \ref{lem40}.

If we suppose that (1) holds, then we obtain \small
\begin{align}
  &\left\{ A\in c_{00}(\g): \norm{\left ( \mathcal{F}^{(x)}_p\right )^{-1}(A)}_{L^p(\g)}\leq 1\right\} \subseteq \left\{ A\in c_{00}(\widehat{\g}): \norm{A}_{\ell^{p'}(\widehat{\g})}\leq  K \right\},
\end{align}
\normalsize which implies
\begin{align}
\norm{f}_{L^{p'}(\g)}\leq \sup_{A\in c_{00}(\widehat{\g}): \norm{A}_{\ell^{p'}(\widehat{\g})}\leq  K }\widehat{h}\left ( \mathcal{F}^{(x)}_{p'}(f)^* \cdot A \right )=K\norm{\mathcal{F}^{(x)}_{p'}(f)}_{\ell^p(\widehat{\g})}
\end{align}
by \eqref{eq44}. On the other hand, the condition (2) implies
\begin{align}
  &\left\{ g\in \text{Pol}(\g): \norm{\mathcal{F}^{(x)}_{p'}(g)}_{\ell^p(\widehat{\g})}\leq 1 \right\} \subseteq \left\{ g\in \text{Pol}(\g): \norm{g}_{L^{p'}(\g)}\leq  K \right\},
\end{align}
so we obtain \small
\begin{align}
\norm{A}_{\ell^{p'}(\widehat{\g})}\leq \sup_{g\in \text{Pol}(\g): \norm{g}_{L^{p'}(\g)}\leq  K} h\left ( \left ( \mathcal{F}^{(x)}_p\right )^{-1} (A)^*\cdot g \right )=K\norm{\left (\mathcal{F}^{(x)}_{p}\right )^{-1}(A)}_{L^p(\g)}
\end{align}
\normalsize by \eqref{eq45}.

\end{proof}

Combining Theorem \ref{thm-main} with the above Proposition \ref{prop-duality}, we obtain the following dual formulation of the strong Hausdorff-Young inequality.

\begin{corollary}\label{cor-duality}
Let $1\leq p\leq 2$ and $x\in I(\g,p)$. Then there exists a universal constant $K=K(\g,p,x)$ such that
\begin{equation}
\norm{f}_{L^{p'}(\g)}\leq K \norm{\mathcal{F}^{(x)}_{p'}(f)}_{\ell^p(\widehat{\g})}
\end{equation}
for all $f\in \text{Pol}(\g)$.

\end{corollary}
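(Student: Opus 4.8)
The plan is to read the statement off directly from Proposition~\ref{prop-duality}, handling the endpoint $p=2$ by hand since that proposition is stated only for $p\in[1,2)$. So I would first dispose of the case $p=2$: here $p'=2$, the twisting exponent $(\frac{1}{p}-\frac{1}{2})x=0$ vanishes, and hence $\mathcal{F}^{(x)}_{p'}=\mathcal{F}^{(x)}_{2}$ is just the standard Fourier transform $f\mapsto\widehat{f}$; the Plancherel identity $\norm{f}_{L^2(\g)}=\norm{\widehat{f}}_{\ell^2(\widehat{\g})}$ then gives the desired inequality with $K=1$.

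For $1\le p<2$ I would argue as follows. By definition, $x\in I(\g,p)$ means that $\mathcal{F}^{(x)}_p$ extends to a bounded map $L^p(\g)\to\ell^{p'}(\widehat{\g})$; since $\text{Pol}(\g)$ is dense in $L^p(\g)$, this is the same as the existence of a constant $K=K(\g,p,x)>0$ with $\norm{\mathcal{F}^{(x)}_p(f)}_{\ell^{p'}(\widehat{\g})}\le K\norm{f}_{L^p(\g)}$ for every $f\in\text{Pol}(\g)$, which is precisely condition (1) of Proposition~\ref{prop-duality}. That proposition then delivers condition (2), namely $\norm{f}_{L^{p'}(\g)}\le K\,\norm{\mathcal{F}^{(x)}_{p'}(f)}_{\ell^p(\widehat{\g})}$ for all $f\in\text{Pol}(\g)$, which is exactly the assertion. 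Nothing new is needed here: Lemma~\ref{lem40} together with the dual pairings \eqref{eq21} and \eqref{eq22} is what powers Proposition~\ref{prop-duality}.

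There is essentially no obstacle; the work has already been done in Lemma~\ref{lem40} and Proposition~\ref{prop-duality}. The only points deserving a line of care are that $p=2$ falls outside the range $[1,2)$ of Proposition~\ref{prop-duality} and so must be treated via Plancherel, and that the constant is literally the same on both sides of the equivalence — one checks this by inspecting the proof of Proposition~\ref{prop-duality}, where the inclusions of unit balls preserve $K$. To see that the corollary is not vacuous, I would also recall that Theorem~\ref{thm-main} gives $[0,1]\subseteq I(\g,p)$ for every $1\le p<2$, so it applies in particular to all $x\in[0,1]$ with $K=1$; taking the infimum over $x\in[0,1]$ then recovers the dual strong Hausdorff--Young inequality \eqref{eq04} announced in the introduction.
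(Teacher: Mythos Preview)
Your proposal is correct and follows the same route as the paper, which simply invokes Proposition~\ref{prop-duality} (the paper phrases it as ``Combining Theorem~\ref{thm-main} with Proposition~\ref{prop-duality}''). Your separate treatment of the endpoint $p=2$ via Plancherel is a welcome bit of care that the paper leaves implicit.
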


Recall that the twisted property RD was defined by the following inequality 
\begin{equation}\label{eq46}
    \norm{f}_{L^{\infty}(\g)}\leq P(k)\cdot \norm{\widehat{f}D^{\frac{1}{2}}}_{\ell^2(\widehat{\g})},~f\in \text{Pol}_k(\g),
\end{equation}
with a polynomial $P$ and $D=\left (\frac{d_{\alpha}}{n_{\alpha}}Q_{\alpha} \right )_{\alpha\in \text{Irr}(\g)}$. With respect to this operator $D$, our dualized strong Hausdorff-Young inequality (Corollary \ref{cor-duality}) induces a stronger form of the twisted property RD as follows.

\begin{proposition}\label{prop40}
Let $1\leq p\leq 2$ and $x\in I(\g,p)$. Then there exists a universal constant $K=K(\g,p,x)$ such that
\begin{align}\label{eq47}
    \norm{f}_{L^{p'}(\g)}\leq K \left ( \sum_{\alpha\in \text{supp}(\widehat{f})}n_{\alpha}^2 \right )^{\frac{1}{p}-\frac{1}{2}}\cdot  \norm{D^{(\frac{1}{p}-\frac{1}{2})x}\widehat{f}D^{(\frac{1}{p}-\frac{1}{2})(1-x)}}_{\ell^2(\widehat{\g})}
\end{align}
for all $f\in \text{Pol}(\g)$.
\end{proposition}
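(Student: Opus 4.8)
The plan is to derive \eqref{eq47} from Corollary \ref{cor-duality} by inserting an interpolation/H\"older step analogous to the passage from \eqref{eq41} to \eqref{eq48}, but now carried out with the twisted Fourier transform $\mathcal{F}^{(x)}_{p'}$ in place of $\mathcal{F}^{(0)}_{p'}$. First I would apply Corollary \ref{cor-duality}: for $x\in I(\g,p)$ there is $K=K(\g,p,x)$ with $\norm{f}_{L^{p'}(\g)}\leq K\norm{\mathcal{F}^{(x)}_{p'}(f)}_{\ell^p(\widehat{\g})}$, where $\mathcal{F}^{(x)}_{p'}(f)=Q^{-(\frac{1}{p'}-\frac{1}{2})x}\widehat{f}Q^{(\frac{1}{p'}-\frac{1}{2})x}=Q^{(\frac{1}{p}-\frac{1}{2})x}\widehat{f}Q^{-(\frac{1}{p}-\frac{1}{2})x}$ since $\frac{1}{p'}-\frac{1}{2}=-(\frac{1}{p}-\frac12)$. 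So the task reduces to bounding $\norm{Q^{(\frac{1}{p}-\frac12)x}\widehat{f}Q^{-(\frac1p-\frac12)x}}_{\ell^p(\widehat{\g})}$ by the $\ell^2$-expression on the right of \eqref{eq47}, with the combinatorial factor $\left(\sum_{\alpha\in\text{supp}(\widehat f)}n_\alpha^2\right)^{\frac1p-\frac12}$.

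The key step is the $\ell^p$-versus-$\ell^2$ comparison on $\widehat{\g}$, done $\alpha$ by $\alpha$ and then summed. Writing $\frac1p=\frac1p-\frac12+\frac12$ and using that $\ell^p(\widehat\g)$-norms are built from Schatten $S^p_{n_\alpha}$-norms weighted by $d_\alpha$, I would use the matricial H\"older inequality $\norm{AB}_{S^p_n}\leq \norm{A}_{S^r_n}\norm{B}_{S^2_n}$ with $\frac1p=\frac1r+\frac12$, i.e. $\frac1r=\frac1p-\frac12$, applied to $A=\bigl(Q^{(\frac1p-\frac12)x}\widehat f Q^{-(\frac1p-\frac12)x}\bigr)(\alpha)Q_\alpha^{\frac1p-\frac12}$ against $B=Q_\alpha^{\frac12}$, or more precisely to split off one factor of $Q_\alpha^{\frac12}$ and convert the remaining $Q_\alpha^{\frac1p-\frac12}$ weight together with the $Q^{\pm(\frac1p-\frac12)x}$ conjugation into the operator $D=(\frac{d_\alpha}{n_\alpha}Q_\alpha)_\alpha$ appearing in \eqref{eq47}; the leftover scalar $\norm{\mathrm{Id}_{n_\alpha}}_{S^r_{n_\alpha}}=n_\alpha^{1/r}=n_\alpha^{\frac1p-\frac12}$ is exactly what produces the factor $n_\alpha^2$ raised to the $(\frac1p-\frac12)$ power after a final H\"older over $\alpha\in\text{supp}(\widehat f)$ with exponents $2$ and $2$ (as in the step \eqref{eq42}--\eqref{eq43}). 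Concretely one writes
\begin{align}
\norm{\mathcal{F}^{(x)}_{p'}(f)}_{\ell^p(\widehat\g)}
&=\left(\sum_{\alpha}d_\alpha\norm{Q_\alpha^{(\frac1p-\frac12)x}\widehat f(\alpha)Q_\alpha^{-(\frac1p-\frac12)x}Q_\alpha^{\frac1p}}_{S^p_{n_\alpha}}^p\right)^{1/p}\\
&\leq\left(\sum_{\alpha}d_\alpha\,n_\alpha^{p(\frac1p-\frac12)}\norm{Q_\alpha^{(\frac1p-\frac12)x}\widehat f(\alpha)Q_\alpha^{-(\frac1p-\frac12)x}Q_\alpha^{\frac1p-\frac12}\cdot Q_\alpha^{\frac12}}_{S^2_{n_\alpha}}^p\right)^{1/p},
\end{align}
absorb the powers $\frac{d_\alpha}{n_\alpha}$ appropriately to recognize $D^{(\frac1p-\frac12)x}$, $D^{(\frac1p-\frac12)(1-x)}$ and a residual $D^{1/2}$, and then apply H\"older over $\alpha$ to pull out $\bigl(\sum_{\alpha\in\text{supp}(\widehat f)}n_\alpha^2\bigr)^{\frac1p-\frac12}$.

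The main obstacle I anticipate is purely bookkeeping: reconciling the three weight exponents so that $Q_\alpha$ appears precisely in the combination $\frac{d_\alpha}{n_\alpha}Q_\alpha=D(\alpha)$ with the right fractional powers $(\frac1p-\frac12)x$ on the left, $(\frac1p-\frac12)(1-x)$ on the right, and $\frac12$ split out for the Hilbert–Schmidt pairing — exactly mirroring, but twisting, the identity used in \eqref{eq42}. One must also check that $1\leq p\leq 2$ guarantees $\frac1p-\frac12\geq 0$ so all these fractional powers of positive operators and all the H\"older exponents are legitimate, and that for $p=2$ the inequality degenerates correctly (the extra factor becomes $1$ and one recovers the Plancherel identity). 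Once the weights are matched, the two H\"older steps (matricial and scalar) are routine, and the constant $K$ is inherited unchanged from Corollary \ref{cor-duality}.
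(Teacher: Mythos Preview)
Your plan is essentially the paper's own proof: apply Corollary \ref{cor-duality}, rewrite $Q_\alpha^{-(\frac1p-\frac12)x}Q_\alpha^{\frac1p}=Q_\alpha^{(\frac1p-\frac12)(1-x)}Q_\alpha^{\frac12}$, use the matricial H\"older inequality $\|AB\|_{S^p}\le\|A\|_{S^2}\|B\|_{S^y}$ with $\frac1p=\frac12+\frac1y$ to pass from $S^p$ to $S^2$ at the cost of $\|\mathrm{Id}_{n_\alpha}\|_{S^y}=n_\alpha^{\frac1p-\frac12}$, absorb the scalars $\frac{d_\alpha}{n_\alpha}$ into $D$, and finish with a scalar H\"older over $\alpha$. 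One small slip: the final H\"older over $\alpha$ is \emph{not} with ``exponents $2$ and $2$'' except at $p=1$; for general $1\le p\le2$ you need the pair $(2,y)$ with $\frac1p=\frac12+\frac1y$ (equivalently, H\"older on the $p$th powers with conjugate exponents $\tfrac2p$ and $\tfrac{y}{p}$), which is exactly what yields $\bigl(\sum_\alpha n_\alpha^{2}\bigr)^{1/y}=\bigl(\sum_\alpha n_\alpha^{2}\bigr)^{\frac1p-\frac12}$.
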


\begin{proof}
First of all, by Corollary \ref{cor-duality}, we have 
\begin{align}
&\norm{f}_{L^{p'}(\g)}^{p}\leq K^p \sum_{\alpha\in\text{supp}(\widehat{f})}d_{\alpha}\text{Tr} \left (\left |Q_{\alpha}^{(\frac{1}{p}-\frac{1}{2})x}\widehat{f}(\alpha)Q_{\alpha}^{-(\frac{1}{p}-\frac{1}{2})x}\cdot Q_{\alpha}^{\frac{1}{p}} \right | ^p\right )\\
&=K^p \sum_{\alpha\in\text{supp}(\widehat{f})}d_{\alpha}\text{Tr} \left (\left |Q_{\alpha}^{(\frac{1}{p}-\frac{1}{2})x}\widehat{f}(\alpha)Q_{\alpha}^{(\frac{1}{p}-\frac{1}{2})(1-x)}\cdot Q_{\alpha}^{\frac{1}{2}} \right | ^p\right )
\end{align}
for all $f\in \text{Pol}(\g)$. Moreover, using the noncommutative H{\"o}lder inequality $\norm{AB}_{S^p_n}\leq \norm{A}_{S^2_n}\norm{B}_{S^y_n}$ when $\frac{1}{2}+\frac{1}{y}=\frac{1}{p}$, we obtain
\begin{align}
    &\norm{Q_{\alpha}^{(\frac{1}{p}-\frac{1}{2})x}\widehat{f}(\alpha)Q_{\alpha}^{(\frac{1}{p}-\frac{1}{2})(1-x)}\cdot Q_{\alpha}^{\frac{1}{2}}}_{S^p_{n_{\alpha}}}\\
    &\leq \norm{Q_{\alpha}^{(\frac{1}{p}-\frac{1}{2})x}\widehat{f}(\alpha)Q_{\alpha}^{(\frac{1}{p}-\frac{1}{2})(1-x)}\cdot Q_{\alpha}^{\frac{1}{2}}}_{S^2_{n_{\alpha}}}\cdot  \norm{\text{Id}_{n_{\alpha}}}_{S^y_{n_{\alpha}}}\\
    &= \norm{Q_{\alpha}^{(\frac{1}{p}-\frac{1}{2})x}\widehat{f}(\alpha)Q_{\alpha}^{(\frac{1}{p}-\frac{1}{2})(1-x)}\cdot Q_{\alpha}^{\frac{1}{2}}}_{S^2_{n_{\alpha}}}\cdot  n_{\alpha}^{\frac{1}{p}-\frac{1}{2}}\\
    &=\norm{\left [D^{(\frac{1}{p}-\frac{1}{2})x}\widehat{f}D^{(\frac{1}{p}-\frac{1}{2})(1-x)}\right ](\alpha)\cdot Q_{\alpha}^{\frac{1}{2}}}_{S^2_{n_{\alpha}}}\cdot \left ( \frac{n_{\alpha}^2}{d_{\alpha}} \right )^{\frac{1}{p}-\frac{1}{2}}.
\end{align}

Combining the above discussions, we obtain
\begin{align}
&\norm{f}_{L^{p'}(\g)}^{p}\\
&\leq K^p \sum_{\alpha\in\text{supp}(\widehat{f})}\left [d_{\alpha}^{\frac{1}{2}}\norm{\left [D^{(\frac{1}{p}-\frac{1}{2})x}\widehat{f}D^{(\frac{1}{p}-\frac{1}{2})(1-x)}\right ](\alpha)\cdot Q_{\alpha}^{\frac{1}{2}}}_{S^2_{n_{\alpha}}} \cdot n_{\alpha}^{2(\frac{1}{p}-\frac{1}{2})} \right ]^p.
\end{align}

Lastly, since $\frac{1}{2}+\frac{1}{y}=\frac{1}{p}$, applying the classical H{\"o}lder inequality once more, we can conclude that 
\begin{align}
\norm{f}_{L^{p'}(\g)}&\leq \norm{D^{(\frac{1}{p}-\frac{1}{2})x}\widehat{f}D^{(\frac{1}{p}-\frac{1}{2})(1-x)}}_{\ell^2(\widehat{\g})}\cdot \left ( \sum_{\alpha\in\text{supp}(\widehat{f})}n_{\alpha}^{2(\frac{1}{p}-\frac{1}{2})y} \right )^{\frac{1}{y}}\\
&=\norm{D^{(\frac{1}{p}-\frac{1}{2})x}\widehat{f}D^{(\frac{1}{p}-\frac{1}{2})(1-x)}}_{\ell^2(\widehat{\g})}\cdot \left ( \sum_{\alpha\in\text{supp}(\widehat{f})}n_{\alpha}^{2} \right )^{\frac{1}{p}-\frac{1}{2}}.
\end{align}

\end{proof}

Finally, we can demonstrate that our results (Theorem \ref{thm-main} and Proposition \ref{prop40}) induce a stronger and general form of the twisted property RD under the assumption of a polynomial growth.

\begin{corollary}\label{cor-strong-twisted-RD}
Let us suppose that $\g$ is a compact matrix quantum group and $|\cdot|$ is the natural length function on $\text{Irr}(\g)$. If there exists a polynomial $P$ such that 
\begin{equation}
\sum_{\alpha\in \text{Irr}(\g): |\alpha|=k}n_{\alpha}^2 \leq P(k)^2    
\end{equation}
for all $k\in \n_0$, then we have
\begin{equation}
\norm{f}_{L^{p'}(\g)} \leq P(k)^{\frac{2}{p}-1}\cdot \inf_{x\in I(\g,p)} \norm{D^{(\frac{1}{p}-\frac{1}{2})x} \widehat{f}D^{(\frac{1}{p}-\frac{1}{2})(1-x)}}_{\ell^2(\widehat{\g})}
\end{equation}
for all $f\in \text{Pol}_k(\g)$ with $1\leq p\leq 2$. In particular, for $p=1$, we have
\begin{equation}
\norm{f}_{L^{\infty}(\g)} \leq P(k)\cdot \inf_{0\leq x \leq 1} \norm{D^{\frac{x}{2}} \widehat{f}D^{\frac{1-x}{2}}}_{\ell^2(\widehat{\g})}
\end{equation}
for all $f\in \text{Pol}_k(\g)$.
\end{corollary}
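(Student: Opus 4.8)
The plan is to deduce Corollary \ref{cor-strong-twisted-RD} directly from Proposition \ref{prop40} together with the strong Hausdorff--Young inequality (Theorem \ref{thm-main}), which guarantees that $[0,1]\subseteq I(\g,p)$ for all $1\leq p<2$ (and trivially $x\in I(\g,1)$ for $x\in[0,1]$ via Theorem \ref{thm-main0}). Fix $f\in \text{Pol}_k(\g)$, so that $\text{supp}(\widehat{f})\subseteq\{\alpha\in\text{Irr}(\g):|\alpha|=k\}$. The first step is simply to invoke Proposition \ref{prop40}: for any $x\in I(\g,p)$ there is a constant $K=K(\g,p,x)$ with
\begin{equation*}
\norm{f}_{L^{p'}(\g)}\leq K\left(\sum_{\alpha\in\text{supp}(\widehat{f})}n_{\alpha}^2\right)^{\frac{1}{p}-\frac{1}{2}}\norm{D^{(\frac{1}{p}-\frac{1}{2})x}\widehat{f}D^{(\frac{1}{p}-\frac{1}{2})(1-x)}}_{\ell^2(\widehat{\g})}.
\end{equation*}

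The second step is to control the combinatorial factor using the polynomial growth hypothesis. Since $\text{supp}(\widehat{f})$ consists only of $\alpha$ with $|\alpha|=k$, we have $\sum_{\alpha\in\text{supp}(\widehat{f})}n_{\alpha}^2\leq\sum_{|\alpha|=k}n_{\alpha}^2\leq P(k)^2$, hence $\left(\sum_{\alpha\in\text{supp}(\widehat{f})}n_{\alpha}^2\right)^{\frac{1}{p}-\frac{1}{2}}\leq P(k)^{2(\frac{1}{p}-\frac{1}{2})}=P(k)^{\frac{2}{p}-1}$ (using $P(k)\geq 0$ and $\frac{1}{p}-\frac{1}{2}\geq 0$ for $p\leq 2$). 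The third step is to address the constant $K$: a priori Proposition \ref{prop40} only gives \emph{some} constant depending on $x$, but Theorem \ref{thm-main} yields the sharp bound with $K=1$ for every $x\in[0,1]$ (the assumption \eqref{eq-assumption} holds with $K=1$ by Theorem \ref{thm-main0}, so the conclusion of Theorem \ref{thm-main} holds with constant $K^{\frac{2}{p}-1}=1$); tracing this through Proposition \ref{prop-duality} and then through the Hölder estimates in the proof of Proposition \ref{prop40}, one sees the constant stays $1$ for $x\in[0,1]$. Therefore, for each $x\in[0,1]$,
\begin{equation*}
\norm{f}_{L^{p'}(\g)}\leq P(k)^{\frac{2}{p}-1}\norm{D^{(\frac{1}{p}-\frac{1}{2})x}\widehat{f}D^{(\frac{1}{p}-\frac{1}{2})(1-x)}}_{\ell^2(\widehat{\g})},
\end{equation*}
and taking the infimum over $x\in[0,1]\subseteq I(\g,p)$ gives the stated inequality. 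The $p=1$ case is the specialization $\frac{1}{p}-\frac{1}{2}=\frac{1}{2}$, $\frac{2}{p}-1=1$, $p'=\infty$, with the infimum running over $0\leq x\leq 1$.

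The main obstacle I anticipate is bookkeeping the constant rather than any genuine analytic difficulty: I need to make sure that invoking Proposition \ref{prop40} with the \emph{sharp} constant from Theorem \ref{thm-main} (namely $K=1$ on $[0,1]$) is legitimate, i.e. that the chain Theorem \ref{thm-main0} $\Rightarrow$ Theorem \ref{thm-main} $\Rightarrow$ Corollary \ref{cor-duality} $\Rightarrow$ Proposition \ref{prop40} preserves the constant $1$ at each link. The duality in Proposition \ref{prop-duality} is constant-preserving by inspection, and the two Hölder steps in Proposition \ref{prop40} introduce only the factor $\left(\sum n_{\alpha}^2\right)^{\frac{1}{p}-\frac{1}{2}}$ and no extra multiplicative constant, so this goes through; the only mild care needed is that Corollary \ref{cor-duality} as stated quantifies "there exists $K$", so I should instead re-derive the $K=1$ statement for $x\in[0,1]$ by feeding the $K=1$ version of Theorem \ref{thm-main} into Proposition \ref{prop-duality} directly. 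A secondary minor point is ensuring $\text{supp}(\widehat{f})\subseteq\{|\alpha|=k\}$ for $f\in\text{Pol}_k(\g)$, which is immediate from the definition of $\text{Pol}_k(\g)$ and the fact that Fourier coefficients are supported on the representations actually appearing in $f$.
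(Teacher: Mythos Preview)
Your approach is exactly what the paper intends: the corollary is stated without proof, and is meant to follow immediately from Proposition \ref{prop40} (with the constant tracked back through Proposition \ref{prop-duality} and Theorem \ref{thm-main}) together with the support bound $\text{supp}(\widehat{f})\subseteq\{\alpha:|\alpha|=k\}$ and the polynomial growth hypothesis. Your verification that the constant stays equal to $1$ along the chain Theorem \ref{thm-main0} $\Rightarrow$ Theorem \ref{thm-main} $\Rightarrow$ Proposition \ref{prop-duality} $\Rightarrow$ Proposition \ref{prop40} is correct and is the only point requiring any care.

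There is one logical slip at the very end. You prove, for each $x\in[0,1]$, the bound
\[
\norm{f}_{L^{p'}(\g)}\leq P(k)^{\frac{2}{p}-1}\norm{D^{(\frac{1}{p}-\frac{1}{2})x}\widehat{f}D^{(\frac{1}{p}-\frac{1}{2})(1-x)}}_{\ell^2(\widehat{\g})},
\]
and then write ``taking the infimum over $x\in[0,1]\subseteq I(\g,p)$ gives the stated inequality.'' But an infimum over the smaller set $[0,1]$ is \emph{at least} the infimum over $I(\g,p)$, so what you obtain is the inequality with $\inf_{0\le x\le 1}$ on the right, not $\inf_{x\in I(\g,p)}$. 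For $x\in I(\g,p)\setminus[0,1]$ the constant $K(\g,p,x)$ coming from Proposition \ref{prop40} need not equal $1$, so the first displayed inequality of the corollary with the full range $I(\g,p)$ and no extra constant is not actually justified by this argument (and, as written, appears to be a slight imprecision in the paper's statement). Your proof does, however, fully establish the second displayed inequality (the $p=1$ case with $\inf_{0\le x\le 1}$), which is the form actually used later.
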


\section{Characterization of boundedness of $\mathcal{F}^{(x)}_p$}\label{sec-optimality}

Recall that  Theorem \ref{thm-main} implies $[0,1]\subseteq I(\g,p)$ for all $1\leq p<2$. This Section is devoted to characterizing $I(\g,p)$ for non-Kac compact quantum groups $\g$. Subsection \ref{subsec-coamenable} focuses on non-Kac and coamenable cases, and Subsection \ref{subsec-non-coamenable} focuses on non-Kac and non-coamenable free orthogonal quantum groups $O_F^+$, respectively.

\subsection{The case of non-Kac and coamenable compact quantum groups}\label{subsec-coamenable}

The main result of this subsection is that $[0,1]=I(\g,p)$ holds for all $1\leq p<2$ under the assumption that $\g$ is non-Kac with a polynomial growth on $\widehat{\g}$. To prove this, we begin with the following lemma.
\begin{lemma}\label{lem50}
Let $1\leq p<2$, $x\in \re$, and $\alpha\in \text{Irr}(\g)$. Suppose that 
\begin{equation}
\norm{f}_{L^{p'}(\g)} \leq w( \alpha )\cdot  \norm{D^{(\frac{1}{p}-\frac{1}{2})x} \widehat{f}D^{(\frac{1}{p}-\frac{1}{2})(1-x)}}_{\ell^2(\widehat{\g})}
\end{equation}
for all $f\in \text{span}\left\{u^{\alpha}_{ij}\right\}_{1\leq i,j\leq n_{\alpha}}$. Then we have
\begin{equation}
 \left ( \frac{n_{\alpha}\norm{Q_{\alpha}^{-1}}_{op}}{d_{\alpha}} \right )^{\frac{1}{p}-\frac{1}{2}}\cdot \left [ \norm{Q_{\alpha}}_{op}\norm{Q_{\alpha}^{-1}}_{op} \right ]^{(\frac{1}{p}-\frac{1}{2})\cdot \max\left\{-x,x-1\right\} } \leq  w(\alpha).
\end{equation}

\end{lemma}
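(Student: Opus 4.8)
The plan is to test the hypothesized inequality against a cleverly chosen family of elements $f$ in $\mathrm{span}\{u^\alpha_{ij}\}_{1\le i,j\le n_\alpha}$ for which both sides can be computed explicitly via Lemma \ref{lem30}, and then optimize over the choice. The natural candidates are the ``diagonal-type'' elements
\begin{equation}
f=\sum_{i=1}^{n_\alpha} y_i\, u^\alpha_{i\,\tau(i)}
\end{equation}
for a bijection $\tau$ of $\{1,\dots,n_\alpha\}$, since Lemma \ref{lem30} gives closed formulas for the relevant $\ell^{p'}$- and $\ell^2$-norms of twisted Fourier transforms of such $f$. First I would use the duality relation of Proposition \ref{prop-duality} (or rather the form in Corollary \ref{cor-duality}/Proposition \ref{prop40}) to recognize that the assumed inequality on $\|f\|_{L^{p'}}$ in terms of $\|D^{\cdots}\widehat f D^{\cdots}\|_{\ell^2}$ is equivalent, up to the same constant $w(\alpha)$, to a lower bound relating $\|\mathcal F^{(x)}_{p'}(f)\|_{\ell^p}$-type quantities; alternatively one can just work directly with $L^{p'}$-norms using the Schur orthogonality relations, which compute $\|u^\alpha_{ij}\|_{L^{p'}(\g)}$ exactly.

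The key computational step is: for the single matrix element $f=u^\alpha_{ij}$, evaluate $\|u^\alpha_{ij}\|_{L^{p'}(\g)}$ (this is a power of $(Q_\alpha)_{ii}^{-1}/d_\alpha$ times a power of $(Q_\alpha)_{jj}(Q_\alpha)_{ii}^{-1}$, extracted from the interpolation description of the $L^{p'}$-norm together with Schur orthogonality), and evaluate the right-hand side $\|D^{(\frac1p-\frac12)x}\widehat f D^{(\frac1p-\frac12)(1-x)}\|_{\ell^2(\widehat\g)}$ using $\widehat{u^\alpha_{ij}}=\frac{(Q_\alpha)_{ii}^{-1}}{d_\alpha}E^\alpha_{ji}$ and the definition of the $\ell^2$-norm with the weight $D=(\frac{d_\alpha}{n_\alpha}Q_\alpha)$. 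The ratio of these two quantities, as a function of the indices $i,j$, will be a product of powers of $(Q_\alpha)_{ii}$, $(Q_\alpha)_{jj}$, $n_\alpha$, $d_\alpha$ and $\|Q_\alpha^{-1}\|_{op}$-type factors. Then I would choose $i$ and $j$ to maximize this ratio: to produce the factor $[\|Q_\alpha\|_{op}\|Q_\alpha^{-1}\|_{op}]^{(\frac1p-\frac12)\max\{-x,x-1\}}$ one picks $(Q_\alpha)_{jj}=\|Q_\alpha\|_{op}$, $(Q_\alpha)_{ii}^{-1}=\|Q_\alpha^{-1}\|_{op}$ when the relevant exponent wants it that way, and the opposite extreme otherwise — exactly the two-case split ($x\le \tfrac12$ vs.\ $x\ge\tfrac12$, equivalently which of $-x$ and $x-1$ is larger) that produces the $\max\{-x,x-1\}$. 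The factor $\big(\frac{n_\alpha\|Q_\alpha^{-1}\|_{op}}{d_\alpha}\big)^{\frac1p-\frac12}$ comes from the $n_\alpha$ appearing in $D$ and the $(Q_\alpha)_{ii}^{-1}=\|Q_\alpha^{-1}\|_{op}$ choice feeding into $\|u^\alpha_{ij}\|_{L^{p'}}$.

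The main obstacle I anticipate is bookkeeping the exponents correctly: there are several competing powers of $\frac1p-\frac12$ coming from (i) the $L^{p'}$-norm of the matrix element, (ii) the twisting exponents $(\frac1p-\frac12)x$ and $(\frac1p-\frac12)(1-x)$, and (iii) the weight $Q_\alpha^{1/2}$ built into the $\ell^2(\widehat\g)$-norm, and one must verify they combine to give precisely $(\frac1p-\frac12)\max\{-x,x-1\}$ and $(\frac1p-\frac12)$ in the final bound, with no stray $Q_\alpha$-dependence left over. A secondary subtlety is checking that for a single matrix element the $L^{p'}$-interpolation norm really equals the ``obvious'' value (one should invoke that $u^\alpha_{ij}$ is, up to scalars, an eigenvector of the modular automorphism group, so its $L^{p'}$-norm is computed by the explicit formula rather than merely bounded by it); this is where I would cite the relevant computation from Subsection \ref{sec-Lp}. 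Once the exponents are pinned down, taking the supremum over admissible $i,j$ in the hypothesized inequality (which holds for all such $f$, in particular each $u^\alpha_{ij}$) yields the claimed lower bound on $w(\alpha)$.
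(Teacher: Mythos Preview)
Your overall plan---test the hypothesis on single matrix elements $f=u^{\alpha}_{ij}$, compute both sides explicitly, and then optimize over $i,j$ with a two-case split producing $\max\{-x,x-1\}$---is exactly what the paper does. The right-hand side computation is routine, and the detour through Proposition~\ref{prop-duality} is unnecessary: the paper just plugs $f=u^{\alpha}_{ij}$ directly into the assumed inequality.

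There is, however, a genuine gap in your handling of the left-hand side. You assert that $\norm{u^{\alpha}_{ij}}_{L^{p'}(\g)}$ equals an explicit product of powers of $(Q_{\alpha})_{ii}$, $(Q_{\alpha})_{jj}$ and $d_{\alpha}$, and you justify this by saying $u^{\alpha}_{ij}$ is an eigenvector of the modular automorphism group. That reasoning does not go through: modular-eigenvector status lets you pass between different embeddings of $\text{Pol}(\g)$ into $L^{p'}$, but it does \emph{not} compute the $L^{p'}$-norm itself. Indeed, already for classical compact Lie groups (where $Q_{\alpha}=\text{Id}$), the quantities $\norm{u^{\alpha}_{ij}}_{L^{p'}}$ are complicated special-function expressions with no closed form of the kind you describe. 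So the ``obvious value'' you refer to does not exist in general, and the secondary subtlety you flag is in fact an obstruction.

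The fix is much simpler than exact computation: since $p'\geq 2$, one has the trivial lower bound
\[
\norm{u^{\alpha}_{ij}}_{L^{p'}(\g)}\ \geq\ \norm{u^{\alpha}_{ij}}_{L^{2}(\g)}\ =\ \left(\frac{(Q_{\alpha})_{ii}^{-1}}{d_{\alpha}}\right)^{1/2},
\]
and this is precisely what the paper uses. Combining it with your explicit formula for the right-hand side gives
\[
w(\alpha)\ \geq\ \left(\frac{n_{\alpha}(Q_{\alpha})_{ii}^{-1}}{d_{\alpha}}\right)^{\frac{1}{p}-\frac{1}{2}}\bigl[(Q_{\alpha})_{jj}(Q_{\alpha})_{ii}^{-1}\bigr]^{-(\frac{1}{p}-\frac{1}{2})x},
\]
from which the two choices of $i,j$ you describe yield the exponents $-x$ and $x-1$ respectively. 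With this one substitution (the $L^2$ lower bound in place of an exact $L^{p'}$ formula) your plan is complete and matches the paper.
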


\begin{proof}

    For any matrix element $f=u^{\alpha}_{ij}$, we have
        \begin{align}
           & \norm{D^{(\frac{1}{p}-\frac{1}{2})x}\widehat{f}D^{(\frac{1}{p}-\frac{1}{2})(1-x)}}_{\ell^2(\widehat{\g})}\\
           & =\left ( d_{\alpha}\cdot \left \| \left ( \frac{d_{\alpha}}{n_{\alpha}} \right )^{\frac{1}{p}-\frac{1}{2}}Q_{\alpha}^{(\frac{1}{p}-\frac{1}{2})x}\cdot \frac{(Q_{\alpha})_{ii}^{-1}}{d_{\alpha}}E^{\alpha}_{ji} \cdot Q_{\alpha}^{(\frac{1}{p}-\frac{1}{2})(1-x)}Q_{\alpha}^{\frac{1}{2}} \right \|_{S^2_{n_{\alpha}}}^2 \right )^{\frac{1}{2}} \\
           &=\frac{\left [d_{\alpha}(Q_{\alpha})_{ii} \right ]^{\frac{1}{p}-1}}{n_{\alpha}^{\frac{1}{p}-\frac{1}{2}}}[(Q_{\alpha})_{jj}(Q_{\alpha})_{ii}^{-1}]^{(\frac{1}{p}-\frac{1}{2})x},
        \end{align}
        and $\norm{f}_{L^{p'}(\g)}\geq \norm{f}_{L^2(\g)}=\left ( \frac{(Q_{\alpha})_{ii}^{-1}}{d_{\alpha}} \right )^{\frac{1}{2}}$. Combining these calculations, our assumption implies
        \begin{align}
            w(\alpha)&\geq \left ( \frac{(Q_{\alpha})_{ii}^{-1}}{d_{\alpha}} \right )^{\frac{1}{2}}\cdot 
            \frac{n_{\alpha}^{\frac{1}{p}-\frac{1}{2}}}{\left [d_{\alpha}(Q_{\alpha})_{ii} \right ]^{\frac{1}{p}-1}} [(Q_{\alpha})_{jj}(Q_{\alpha})_{ii}^{-1}]^{-(\frac{1}{p}-\frac{1}{2})x}\\
            \label{eq50}&= \left ( \frac{n_{\alpha}(Q_{\alpha})_{ii}^{-1}}{d_{\alpha}} \right )^{\frac{1}{p}-\frac{1}{2}}[(Q_{\alpha})_{jj}(Q_{\alpha})_{ii}^{-1}]^{-(\frac{1}{p}-\frac{1}{2})x}.
        \end{align}
Here, if we take indices $i,j$ such that $(Q_{\alpha})_{ii}^{-1}=\norm{Q_{\alpha}^{-1}}_{op}$ and $(Q_{\alpha})_{jj}=\norm{Q_{\alpha}}_{op}$, then we obtain
\begin{equation}
    \left ( \frac{n_{\alpha}\norm{Q_{\alpha}^{-1}}_{op}}{d_{\alpha}} \right )^{\frac{1}{p}-\frac{1}{2}}\cdot \left [ \norm{Q_{\alpha}}_{op}\norm{Q_{\alpha}^{-1}}_{op} \right ]^{-(\frac{1}{p}-\frac{1}{2})x} \leq  w(\alpha).
\end{equation}
On the other hand, \eqref{eq50} can be written as
\begin{align}
    w(\alpha)\geq &\left ( \frac{n_{\alpha}(Q_{\alpha})_{ii}^{-1}}{d_{\alpha}} \right )^{\frac{1}{p}-\frac{1}{2}}[(Q_{\alpha})_{jj}^{-1}(Q_{\alpha})_{ii}]^{(\frac{1}{p}-\frac{1}{2})(x-1)}\cdot [(Q_{\alpha})_{jj}^{-1}(Q_{\alpha})_{ii}]^{\frac{1}{p}-\frac{1}{2}}\\
    &=\left ( \frac{n_{\alpha}(Q_{\alpha})_{jj}^{-1}}{d_{\alpha}} \right )^{\frac{1}{p}-\frac{1}{2}}[(Q_{\alpha})_{jj}^{-1}(Q_{\alpha})_{ii}]^{(\frac{1}{p}-\frac{1}{2})(x-1)}
\end{align}
Thus, if we take indices $i,j$ satisfying $(Q_{\alpha})_{jj}^{-1}=\norm{Q_{\alpha}^{-1}}_{op}$ and $(Q_{\alpha})_{ii}=\norm{Q_{\alpha}}_{op}$, then we obtain
\begin{equation}
    \left ( \frac{n_{\alpha}\norm{Q_{\alpha}^{-1}}_{op}}{d_{\alpha}} \right )^{\frac{1}{p}-\frac{1}{2}}\cdot \left [ \norm{Q_{\alpha}^{-1}}_{op}\norm{Q_{\alpha}}_{op} \right ]^{(\frac{1}{p}-\frac{1}{2})(x-1)} \leq  w(\alpha).
\end{equation}
\end{proof}

Now, let us combine some main results of Section \ref{sec-twisted} and Section \ref{sec-RD} with the above Lemma \ref{lem50} to obtain the following comprehensive theorem.

\begin{theorem}\label{thm50}
Let $\g$ be a general compact quantum group, $1\leq p<2$ and $x\in \re$. Then (1) $\Rightarrow$ (2) $\Rightarrow$ (3) $\Rightarrow$ (4) $\Rightarrow$ (5) holds in the following list.
\begin{enumerate}
    \item $0\leq x \leq 1$.
    \item $\norm{\mathcal{F}^{(x)}_p(f)}_{\ell^{p'}(\widehat{\g})}\leq \norm{f}_{L^p(\g)}$ for all $f\in \text{Pol}(\g)$.
    \item There exists a universal constant $K=K(\g,p,x)$ such that 
    \begin{equation}
        \norm{\mathcal{F}^{(x)}_p(f)}_{\ell^{p'}(\widehat{\g})}\leq K\norm{f}_{L^p(\g)}
    \end{equation}
    for all $f\in \text{Pol}(\g)$.
    \item There exists a universal constant $K=K(\g,p,x)$ such that 
    \begin{equation}
        \norm{f}_{L^{p'}(\g)}\leq K \left ( \sum_{\alpha\in \text{supp}(\widehat{f})}n_{\alpha}^2 \right )^{\frac{1}{p}-\frac{1}{2}}\cdot  \norm{D^{(\frac{1}{p}-\frac{1}{2})x}\widehat{f}D^{(\frac{1}{p}-\frac{1}{2})(1-x)}}_{\ell^2(\widehat{\g})}
    \end{equation}
    for all $f\in \text{Pol}(\g)$.
    \item There exists a universal constant $K=K(\g,p,x)$ satisfying
    \begin{align}
\label{eq52}      &\frac{n_{\alpha}\norm{Q_{\alpha}^{-1}}_{op}}{d_{\alpha}} \cdot  \left [ \norm{Q_{\alpha}^{-1}}_{op}\norm{Q_{\alpha}}_{op} \right ]^{\max\left\{-x,x-1\right\}} \leq  K \cdot n_{\alpha}^2
    \end{align}
    for all $\alpha\in \text{Irr}(\g)$.
\end{enumerate}
Additionally, if we suppose that $\g$ is a compact matrix quantum group of non-Kac type and if 
\begin{equation}\label{eq51}
    \liminf_{k\rightarrow \infty}\left ( \sum_{\alpha\in \text{Irr}(\g): |\alpha|\leq k}n_{\alpha}^2 \right )^{\frac{1}{k}}=1,
\end{equation}
then (5) $\Rightarrow$ (1) holds.
\end{theorem}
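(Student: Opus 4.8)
The plan is to obtain the chain $(1)\Rightarrow(2)\Rightarrow(3)\Rightarrow(4)\Rightarrow(5)$ almost entirely by assembling results already established, and then to prove the converse $(5)\Rightarrow(1)$ by a growth argument. For $(1)\Rightarrow(2)$ I would read off inequality \eqref{eq30.5} of Theorem \ref{thm-main}: for $x\in[0,1]$ it gives exactly $\norm{\mathcal{F}^{(x)}_p(f)}_{\ell^{p'}(\widehat{\g})}\le\norm{f}_{L^p(\g)}$ for all $f\in\text{Pol}(\g)$. The step $(2)\Rightarrow(3)$ is immediate with $K=1$. Since $(3)$ is by definition the statement that $x\in I(\g,p)$, Proposition \ref{prop40} applies verbatim and produces $(4)$ (with a possibly larger constant). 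For $(4)\Rightarrow(5)$ I would fix $\alpha\in\text{Irr}(\g)$ and apply $(4)$ only to $f\in\text{span}\{u^{\alpha}_{ij}\}_{1\le i,j\le n_{\alpha}}$; for such $f$ one has $\text{supp}(\widehat{f})\subseteq\{\alpha\}$, so $(4)$ reduces to precisely the hypothesis of Lemma \ref{lem50} with $w(\alpha)=K\,n_{\alpha}^{2(\frac1p-\frac12)}$. Since $\frac1p-\frac12>0$ on $[1,2)$, raising the conclusion of Lemma \ref{lem50} to the power $\frac{2p}{2-p}=(\frac1p-\frac12)^{-1}$ then yields $(5)$.

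The substantive implication is $(5)\Rightarrow(1)$, which I would argue by contraposition. Put $\delta:=\max\{-x,x-1\}$, and suppose $x\notin[0,1]$, so that $\delta>0$. Starting from $(5)$, dividing by $n_{\alpha}$ and using $d_{\alpha}=\Tr(Q_{\alpha}^{-1})\le n_{\alpha}\norm{Q_{\alpha}^{-1}}_{op}$ (hence $\norm{Q_{\alpha}^{-1}}_{op}/d_{\alpha}\ge 1/n_{\alpha}$), I would first extract the cleaner bound
\[
\big[\,\norm{Q_{\alpha}}_{op}\norm{Q_{\alpha}^{-1}}_{op}\,\big]^{\delta}\;\le\;K\,n_{\alpha}^{2}\qquad\text{for all }\alpha\in\text{Irr}(\g).
\]
Next I would feed in the sequence $(\alpha_k)_{k\in\n}$ supplied by Lemma \ref{lem20}: writing $\rho:=\norm{Q_{\alpha_1}}_{op}>1$, one has $\norm{Q_{\alpha_k}}_{op}=\rho^{2^{k-1}}$, while $\norm{Q_{\alpha_k}^{-1}}_{op}\ge d_{\alpha_k}/n_{\alpha_k}\ge1$ and $|\alpha_k|\le 2^{k-1}|\alpha_1|$. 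Setting $S(\ell):=\sum_{|\beta|\le\ell}n_{\beta}^{2}$, a non-decreasing function of $\ell$, the displayed bound gives
\[
\rho^{\delta 2^{k-1}}\;\le\;\big[\,\norm{Q_{\alpha_k}}_{op}\norm{Q_{\alpha_k}^{-1}}_{op}\,\big]^{\delta}\;\le\;K\,n_{\alpha_k}^{2}\;\le\;K\,S(2^{k-1}|\alpha_1|)\qquad\text{for all }k.
\]

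The last step is to upgrade this lower bound on $S$, which a priori holds only at the doubly-exponentially spaced scales $2^{k-1}|\alpha_1|$, to a lower bound at every scale; here monotonicity of $S$ is exactly what is needed. Given a large $j$, choose $k$ with $2^{k-1}|\alpha_1|\le j<2^{k}|\alpha_1|$, so that $2^{k-1}>j/(2|\alpha_1|)$; then $S(j)\ge S(2^{k-1}|\alpha_1|)\ge K^{-1}\rho^{\delta 2^{k-1}}>K^{-1}\rho^{\delta j/(2|\alpha_1|)}$, whence $S(j)^{1/j}>K^{-1/j}\rho^{\delta/(2|\alpha_1|)}$ and therefore $\liminf_{j\to\infty}S(j)^{1/j}\ge\rho^{\delta/(2|\alpha_1|)}>1$, contradicting \eqref{eq51}. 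Hence $x\in[0,1]$, as required. I expect this bridging passage — from the sparse lengths produced by Lemma \ref{lem20} to the full $\liminf$ in \eqref{eq51} — to be the only point requiring genuine care, and it hinges entirely on the elementary fact that the cumulative dimension counting function $S$ is monotone. The remaining ingredients ($d_{\alpha}\le n_{\alpha}\norm{Q_{\alpha}^{-1}}_{op}$, $d_{\alpha}\ge n_{\alpha}$, and the bookkeeping with the exponent $\frac1p-\frac12$) are routine.
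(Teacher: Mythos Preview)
Your proof is correct and follows essentially the same route as the paper's. The implications $(1)\Rightarrow(2)\Rightarrow(3)\Rightarrow(4)\Rightarrow(5)$ are assembled exactly as in the text, via Theorem \ref{thm-main}, Proposition \ref{prop40} and Lemma \ref{lem50}; for $(5)\Rightarrow(1)$ the paper argues directly (deducing $\norm{Q_{\alpha_1}}_{op}^{\max\{-x,x-1\}}\le 1$ from the subexponential growth along the sparse scales $2^{n-1}|\alpha_1|$), whereas you argue the contrapositive and spell out the monotonicity bridging from those scales to all $j$ --- this is the same idea made explicit, and indeed the paper's terse equality $\liminf_n S(2^{n-1}|\alpha_1|)^{1/2^{n-1}}=1$ relies on exactly the monotonicity observation you highlight.
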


\begin{proof}
(1) $\Rightarrow$ (2) is thanks to Theorem \ref{thm-main}, (2) $\Rightarrow$ (3) is clear, (3) $\Rightarrow$ (4) $\Rightarrow$ (5) is thanks to Proposition \ref{prop40} and Lemma \ref{lem50}. Now, let us focus on (5) $\Rightarrow$ (1). By Lemma \ref{lem20}, there exists a sequence $(\alpha_n)_{n\in \n}\subseteq \text{Irr}(\g)$ such that $|\alpha_n|\leq  2^{n-1}\cdot |\alpha_1|$ and $\norm{Q_{\alpha_n}}_{op}=\norm{Q_{\alpha_1}}_{op}^{2^{n-1}}>1$ for all $n\in \n$. Then the given assumption \eqref{eq52} implies
\begin{equation}
   \norm{Q_{\alpha_1}}_{op}^{2^{n-1}\cdot \max\left\{-x,x-1\right\}}= \norm{Q_{\alpha_n}}_{op}^{\max\left\{-x,x-1\right\}}\leq K \sum_{\alpha\in \text{Irr}(\g): |\alpha|\leq 2^{n-1}|\alpha_1|}n_{\alpha}^2,
\end{equation}
so the assumption \eqref{eq51} implies
\begin{equation}
    \norm{Q_{\alpha_1}}_{op}^{\max\left\{-x,x-1\right\}}\leq \liminf_{n\rightarrow \infty} \left ( \sum_{\alpha\in \text{Irr}(\g): |\alpha|\leq 2^{n-1}\cdot |\alpha_1|}n_{\alpha}^2 \right )^{\frac{1}{2^{n-1}}}=1,
\end{equation}
which is true only when $-x\leq 0$ and $x-1\leq 0$, i.e. $x\in [0,1]$.
\end{proof}

The Drinfeld-Jimbo $q$-deformations $G_q$ are standard examples of non-Kac compact quantum groups whose duals have polynomial growth; hence, the following Corollary.

\begin{corollary}\label{cor50.5}
Let $G$ be a simply connected semisimple compact Lie group and $0<q<1$. Then $I(G_q,p)=[0,1]$ holds for any $1\leq p<2$.
\end{corollary}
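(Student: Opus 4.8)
The plan is to deduce Corollary \ref{cor50.5} directly from Theorem \ref{thm50} by verifying its two hypotheses for $\g=G_q$. First I would recall that $G_q$ is a compact matrix quantum group (it is a $q$-deformation of the coordinate algebra $\mathcal{O}(G)$, generated by the matrix coefficients of the fundamental representation), and that it is of non-Kac type for $0<q<1$ since the $Q_\alpha$ are nontrivial. So the "additional" branch of Theorem \ref{thm50}, giving (5) $\Rightarrow$ (1), applies once we check the growth condition \eqref{eq51}.

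The key step is therefore to establish
\[
\liminf_{k\to\infty}\left(\sum_{\alpha\in\Irr(G_q):\,|\alpha|\leq k}n_\alpha^2\right)^{\frac1k}=1.
\]
Here I would use the well-known fact that $\Irr(G_q)$ is in bijection with $\Irr(G)$ — indexed by dominant integral weights — with matching dimensions $n_\alpha=\dim V_\alpha$ and matching fusion rules, so the natural length function $|\cdot|$ on $\Irr(G_q)$ agrees with the word-length on $\Irr(G)$ coming from the fundamental representation(s). Thus $\sum_{|\alpha|\leq k}n_\alpha^2$ is exactly the same quantity as for the classical compact Lie group $G$. Since $G$ is a compact (hence amenable, indeed finite-dimensional as a manifold) Lie group, its dual $\widehat G$ has polynomial growth: there is a polynomial $P$ with $\sum_{|\alpha|\leq k}n_\alpha^2\leq P(k)$ (this is the classical property RD / polynomial growth of $\widehat G$, due to compactness and the Weyl dimension formula — dimensions grow polynomially in the weight and the number of weights of bounded length grows polynomially). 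Then $\left(\sum_{|\alpha|\leq k}n_\alpha^2\right)^{1/k}\leq P(k)^{1/k}\to 1$, while the sum is $\geq 1$, giving the required limit. Equivalently, one may invoke that $\widehat{G_q}$ has polynomial growth — this is stated in the excerpt's introduction and is exactly the hypothesis of Corollary \ref{cor-strong-twisted-RD}.

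With both hypotheses of Theorem \ref{thm50} in hand, I conclude as follows. Theorem \ref{thm-main} (equivalently (1) $\Rightarrow$ (2) $\Rightarrow$ (3) in Theorem \ref{thm50}) already gives $[0,1]\subseteq I(G_q,p)$ for all $1\leq p<2$. Conversely, if $x\in I(G_q,p)$, then condition (3) of Theorem \ref{thm50} holds, so by the implication chain (3) $\Rightarrow$ (4) $\Rightarrow$ (5) and then (5) $\Rightarrow$ (1) — valid since $G_q$ is non-Kac matrix and satisfies \eqref{eq51} — we get $x\in[0,1]$. Hence $I(G_q,p)=[0,1]$.

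The main obstacle, such as it is, is purely bookkeeping: one must be careful that the natural length function on $\Irr(G_q)$ used in Theorem \ref{thm50} genuinely coincides with (or is bi-Lipschitz equivalent to) the classical word length on $\Irr(G)$, so that the classical polynomial-growth estimate transfers. This is standard — the representation category of $G_q$ is monoidally equivalent to that of $G$ as a fusion rule algebra, so lengths match exactly when $G_q$ is presented via the $q$-deformed fundamental representation(s) — but it is the only place where a genuine (if routine) argument is needed rather than a direct citation of Theorem \ref{thm50}. Everything else is a mechanical application of the already-proved implications.
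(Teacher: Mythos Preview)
Your proposal is correct and follows essentially the same approach as the paper: the paper simply notes that the Drinfeld--Jimbo $q$-deformations $G_q$ are non-Kac compact matrix quantum groups whose duals have polynomial growth, and then invokes Theorem \ref{thm50} directly. Your write-up supplies the (routine) justification for polynomial growth via the identification of $\Irr(G_q)$ with $\Irr(G)$ at the level of dimensions and fusion rules, which is exactly the missing detail the paper leaves implicit.
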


\subsection{The case of non-Kac and non-coamenable free orthogonal quantum groups}\label{subsec-non-coamenable}

We turn our attention to the cases where $\g$ is non-Kac and non-coamenable. A prototypical example is the so-called free orthogonal quantum group $O_F^+$. We may assume that $F$ is of the form $\sum_{i=1}^N \lambda_i e_{i,N+1-i}\in M_N(\mathbb{R})$ satisfying that $F^2=\pm \text{Id}_N$ and $(|\lambda_i|)_{1\leq i \leq N}$ is monotone increasing. In this Section, let us suppose that $N\geq 3$ and $F$ is not a unitary. These conditions are equivalent to that $O_F^+$ is non-coamenable and non-Kac, respectively. 

For any free orthogonal quantum groups $O_F^+$, the space of irreducible unitary representations $\text{Irr}(O_F^+)$ is identified with 
\begin{equation}
    \left\{u^{(k)}\in M_{n_k}(\Comp)\otimes L^{\infty}(O_N^+): k\in \n_0\right\}
\end{equation}
and $Q_1=F^*F$, $\norm{Q_k^{-1}}_{op}=\norm{Q_k}_{op}=\norm{Q_1}_{op}^k=\norm{F}_{op}^{2k}$ for all $k\in \n_0$. Moreover, we have $n_0=1=d_0$, $n_1=N$, $d_1=\text{Tr}(F^*F)$, and
\begin{align}
\left\{\begin{array}{ll}
   & n_{k+2}=n_1n_{k+1}-n_k\\
   & d_{k+2}=d_1d_{k+1}-d_k \end{array} \right . ,~k\in \n_0.
\end{align}
This allows us to obtain the following asymptotic estimates
\begin{align}\label{eq56}
\left\{\begin{array}{ll}
   & n_k\sim \left ( \frac{n_1+\sqrt{n_1^2-4}}{2} \right )^k\\
   & d_k\sim \left ( \frac{d_1+\sqrt{d_1^2-4}}{2} \right )^k \end{array} \right . .
\end{align}
Here, $C\lesssim D $ means that there is a universal constant $K$, independent of $C,D$, satisfying
\begin{equation}
     C\leq K\cdot D,
\end{equation}
and we write $A\sim B$ if $A\lesssim B$ and $B\lesssim A$.

To demonstrate $[0,1]\subsetneq I(O_F^+,p)$, our strategy is to apply a recently developed {\it Haagerup inequality} on non-Kac free orthogonal quantum groups \cite{BVY21,You22} to establish the following lemma.

\begin{lemma}\label{lem51}
For any non-Kac free orthogonal quantum group $O_F^+$, there exists a universal constant $C=C(F)$ such that
\begin{align}
   C\cdot  \sup_{k\in \n_0}\frac{\sqrt{d_k}}{\norm{F}_{op}^{(\max\left\{|2x-1|,1\right\}+2)k}}\norm{Q_k^{-\frac{x}{2}}\widehat{f}(k)Q_k^{\frac{x}{2}}}_{op} \leq \norm{f}_{L^1(O_F^+)} 
\end{align}
for all $f\in \text{Pol}(O_F^+)$ and $x\in \re$.
\end{lemma}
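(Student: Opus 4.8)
The plan is to combine the recently established Haagerup inequality on non-Kac $O_F^+$ with the twisted Fourier transform machinery of Section \ref{sec-twisted}. First I would recall the precise form of the Haagerup inequality: for non-Kac $O_F^+$ there is a universal constant such that, for $f \in \text{Pol}_k(O_F^+)$ supported on the single irreducible $u^{(k)}$, the $L^\infty$-norm $\norm{f}_{L^\infty(O_F^+)}$ is dominated by a polynomial (in fact constant, up to the growth rate, for a fixed level) times a weighted $\ell^2$-type norm of $\widehat{f}(k)$; equivalently, dualizing via \eqref{eq21} and Lemma \ref{lem40}, one gets a lower bound on $\norm{f}_{L^1(O_F^+)}$ in terms of a weighted $S^2$-norm of $\widehat{f}(k)$. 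The key structural input is that on $O_F^+$ one has $\norm{Q_k^{-1}}_{op} = \norm{Q_k}_{op} = \norm{F}_{op}^{2k}$ for all $k$, together with the asymptotics \eqref{eq56} for $d_k$ and $n_k$, so all the weights reduce to explicit powers of $\norm{F}_{op}$.

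Next I would transport the twisted operator $Q_k^{-\frac{x}{2}}\widehat{f}(k)Q_k^{\frac{x}{2}}$ onto the scale governed by the Haagerup inequality. The point is that conjugating $\widehat{f}(k)$ by $Q_k^{\pm x/2}$ distorts the $S^2$-norm by at most $\norm{Q_k^{x/2}}_{op}\norm{Q_k^{-x/2}}_{op}$ on one side and a symmetric factor on the other; using $\norm{Q_k}_{op} = \norm{Q_k^{-1}}_{op} = \norm{F}_{op}^{2k}$, each such conjugation costs a factor $\norm{F}_{op}^{|x|k}$, and one must also account for the passage between the two natural twists (corresponding to $x$ and $1-x$, i.e.\ to the two Schur orthogonality normalizations), which is what produces the exponent $\max\{|2x-1|,1\}$ rather than just $|x|$: when $0\le x\le 1$ the worst conjugation cost is bounded by $\norm{F}_{op}^{k}$ (the ``$1$'' in the maximum), while for $x$ outside $[0,1]$ the dominant cost is $\norm{F}_{op}^{|2x-1|k}$. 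The extra ``$+2$'' in the exponent absorbs the polynomial factor $P(k)\sim n_k \sim (\cdots)^k \le \norm{F}_{op}^{2k}$ coming from the Haagerup inequality (using $n_1 \le d_1$ and hence $n_k \lesssim d_k \lesssim \norm{F}_{op}^{2k}$) together with the $\sqrt{d_k}$ discrepancy between the $L^1$ and $L^2$ normalizations. Carefully tracking these factors through Lemma \ref{lem30} (which gives the exact value of the relevant norms on a single matrix element) yields the claimed inequality, with the supremum over $k$ appearing because the Haagerup inequality applies level-by-level and the estimate on each level is uniform.

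The main obstacle I expect is bookkeeping the exponents correctly so that the single exponent $\max\{|2x-1|,1\}+2$ genuinely dominates all the $k$-dependent factors simultaneously and uniformly in $k$ — in particular, verifying that the worst case over the choice of diagonal entries of $Q_k$ (as in the two cases of Lemma \ref{lem50}, $(Q_k)_{ii}^{-1} = \norm{Q_k^{-1}}_{op}$ versus $(Q_k)_{ii} = \norm{Q_k}_{op}$) does not exceed this bound, and that the $\max$ correctly interpolates between the regime $x\in[0,1]$ and $x\notin[0,1]$. A secondary technical point is to state the Haagerup inequality of \cite{BVY21,You22} in precisely the weighted form needed here (it is typically phrased with the operator $D = (\frac{d_\alpha}{n_\alpha}Q_\alpha)_\alpha$ rather than directly with $Q_k$), so a short reduction identifying the weights on a single level $k$ is required before the conjugation estimates can be applied; on $O_F^+$ this is routine since $D(k) = \frac{d_k}{n_k}Q_k$ and $\frac{d_k}{n_k}$ is controlled by \eqref{eq56}.
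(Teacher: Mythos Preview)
Your high-level strategy is right --- combine a Haagerup-type $L^1$ lower bound from \cite{BVY21,You22} with a conjugation estimate --- but your plan overcomplicates what is in fact a two-line argument, and you misidentify where the exponents come from.

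The paper's proof proceeds as follows. First, write
\[
Q_k^{-\frac{x}{2}}\widehat{f}(k)Q_k^{\frac{x}{2}} \;=\; Q_k^{-\frac{x}{2}}\cdot\bigl(\widehat{f}(k)Q_k^{\frac{1}{2}}\bigr)\cdot Q_k^{\frac{x-1}{2}},
\]
and bound the operator norm by $\norm{Q_k^{-x/2}}_{op}\norm{\widehat{f}(k)Q_k^{1/2}}_{op}\norm{Q_k^{(x-1)/2}}_{op}$. Since $\norm{Q_k}_{op}=\norm{Q_k^{-1}}_{op}=\norm{F}_{op}^{2k}$, one has $\norm{Q_k^{s}}_{op}=\norm{F}_{op}^{2|s|k}$ for any real $s$, so the product of the two outer factors is $\norm{F}_{op}^{(|x|+|x-1|)k}=\norm{F}_{op}^{\max\{|2x-1|,1\}k}$. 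That is the whole origin of the $\max\{|2x-1|,1\}$: it is just $|x|+|x-1|$, coming from a single factorization, not from ``two natural twists'' or Schur normalizations. Second, the cited result \cite[Proposition 4.4]{You22} already delivers the $L^1$ lower bound in exactly the needed form,
\[
C\cdot \sup_{k}\frac{\sqrt{d_k}}{\norm{F}_{op}^{2k}}\norm{\widehat{f}(k)Q_k^{1/2}}_{S^2_{n_k}}\;\le\;\norm{f}_{L^1(O_F^+)},
\]
so the ``$+2$'' in the exponent is literally the $\norm{F}_{op}^{2k}$ sitting in that denominator --- not an absorption of a polynomial factor $P(k)\sim n_k$, nor an $L^1$--$L^2$ discrepancy. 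Combining the two displays gives the lemma immediately.

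In particular, none of the machinery you list is needed: no dualization via \eqref{eq21} or Lemma~\ref{lem40}, no use of Lemma~\ref{lem30} on matrix elements, no worst-case analysis over diagonal entries of $Q_k$ as in Lemma~\ref{lem50}, and no passage through the $D$-weighted form of the Haagerup inequality. The Haagerup inequality of \cite{You22} is already stated with the weight $Q_k^{1/2}$, so no ``short reduction'' is required either. Your plan would likely still reach the conclusion, but with considerably more bookkeeping than necessary, and the narrative you give for the exponents does not match the actual mechanism.
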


\begin{proof}

Let us fix $x\in \re$ and $f\in \text{Pol}(O_F^+)$. Note that, for any $k\in \n_0$, we have
\begin{equation}\label{eq53}
    \norm{Q_{k}^{-\frac{x}{2}}\widehat{f}(k)Q_k^{\frac{x}{2}}}_{op}\leq \norm{Q_k^{-\frac{x}{2}}}_{op}\norm{\widehat{f}(k)Q_k^{\frac{1}{2}}}_{op}\norm{Q_k^{\frac{x-1}{2}}}_{op}.
\end{equation}
Let us consider the following three distinct cases $\left\{\begin{array}{lll}(1)&x>1\\ (2) &0\leq x \leq 1\\ (3) &x<0 \end{array} \right .$. For each case, the right-hand side on \eqref{eq53} is given by
\begin{equation}
    \left\{\begin{array}{ll}
    (1)& \norm{Q_k^{-\frac{1}{2}}}_{op}^x\norm{\widehat{f}(k)Q_k^{\frac{1}{2}}}_{op}\norm{Q_k^{\frac{1}{2}}}_{op}^{x-1}=\norm{F}_{op}^{(2x-1)k}\norm{\widehat{f}(k)Q_k^{\frac{1}{2}}}_{op},\\
    (2)& \norm{Q_k^{-\frac{1}{2}}}_{op}^x\norm{\widehat{f}(k)Q_k^{\frac{1}{2}}}_{op}\norm{Q_k^{-\frac{1}{2}}}_{op}^{1-x}=\norm{F}_{op}^{k}\norm{\widehat{f}(k)Q_k^{\frac{1}{2}}}_{op},\\
    (3)& \norm{Q_k^{\frac{1}{2}}}_{op}^{-x}\norm{\widehat{f}(k)Q_k^{\frac{1}{2}}}_{op}\norm{Q_k^{-\frac{1}{2}}}_{op}^{1-x}= \norm{F}_{op}^{(1-2x)k}\norm{\widehat{f}(k)Q_k^{\frac{1}{2}}}_{op}.
    \end{array} \right .
\end{equation}
To pursue simplicity, let us write $\gamma=\max\left\{|2x-1|,1\right\}$. Then the discussion above can be summarized as
\begin{align}\label{eq54}
    \norm{Q_{k}^{-\frac{x}{2}}\widehat{f}(k)Q_k^{\frac{x}{2}}}_{op}\leq \norm{F}_{op}^{\gamma k} \norm{\widehat{f}(k)Q_k^{\frac{1}{2}}}_{op}.
\end{align}

Note that, by \cite[Proposition 4.4]{You22}, there exists a universal constant $C=C(F)$ such that 
\begin{equation}\label{eq55}
   C \cdot \sup_{k\in \n_0}\frac{\sqrt{d_k}\norm{\widehat{f}(k)Q_k^{\frac{1}{2}}}_{op}}{\norm{F}_{op}^{2k}} \leq C \cdot \sup_{k\in \n_0}\frac{\sqrt{d_k}\norm{\widehat{f}(k)Q_k^{\frac{1}{2}}}_{S^2_{n_{k}}}}{\norm{F}_{op}^{2k}} \leq \norm{f}_{L^1(O_F^+)}
\end{equation}
for all $k\in \n_0$ and $f\in \text{Pol}(O_F^+)$. Thus, combining \eqref{eq54} and \eqref{eq55}, we can conclude that
\begin{align}
C\cdot  \sup_{k\in \n_0}\frac{\sqrt{d_k}}{\norm{F}_{op}^{(\gamma+2)k}}\norm{Q_k^{-\frac{x}{2}}\widehat{f}(k)Q_k^{\frac{x}{2}}}_{op} \leq \norm{f}_{L^1(\g)} .
\end{align}
\end{proof}

While Lemma \ref{lem51} will later serve as a sufficient condition for $x\in I(O_F^+,p)$, the following Lemma provides a necessary condition for $x\in I(\g,p)$.

\begin{lemma}\label{lem52}
Let $\g$ be a compact quantum group satisfying $\norm{Q_{\alpha}}_{op}=\norm{Q_{\alpha}^{-1}}_{op}$ for all $\alpha\in \text{Irr}(\g)$. If $x\in I(\g,p)$ with $1\leq p<2$, then we have $\displaystyle \sup_{\alpha\in \text{Irr}(\g)}\frac{\norm{Q_{\alpha}}_{op}^{|2x-1|}}{d_{\alpha}}<\infty$.
\end{lemma}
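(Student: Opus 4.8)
The statement to be proven is Lemma \ref{lem52}: if $\g$ satisfies $\norm{Q_\alpha}_{op}=\norm{Q_\alpha^{-1}}_{op}$ for all $\alpha$ and $x\in I(\g,p)$ with $1\le p<2$, then $\sup_\alpha \norm{Q_\alpha}_{op}^{|2x-1|}/d_\alpha<\infty$. The natural strategy is to test the boundedness assumption on a single carefully chosen matrix element $u^\alpha_{ij}$ for each $\alpha$, exactly as in the proof of Proposition \ref{prop51} and Proposition \ref{thm-incomparability}. By hypothesis there is a constant $K$ with $\norm{\mathcal{F}^{(x)}_p(f)}_{\ell^{p'}(\widehat\g)}\le K\norm{f}_{L^p(\g)}$ for all $f$; applying this to $f=u^\alpha_{ij}$ and invoking the explicit formula from Lemma \ref{lem30},
\[
\norm{\mathcal{F}^{(x)}_p(u^\alpha_{ij})}_{\ell^{p'}(\widehat\g)}=\bigl[(Q_\alpha)_{jj}^{-1}(Q_\alpha)_{ii}\bigr]^{(\frac1p-\frac12)x}\cdot\Bigl(\tfrac{(Q_\alpha)_{ii}^{-1}}{d_\alpha}\Bigr)^{\frac1p},
\]
reduces the problem to controlling the denominator $\norm{u^\alpha_{ij}}_{L^p(\g)}$ from above.

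For the upper bound on $\norm{u^\alpha_{ij}}_{L^p(\g)}$, I would interpolate between $L^2$ and $L^\infty$ exactly as in Proposition \ref{prop51}: since $\tfrac1p=\tfrac{2/p}{2}+\tfrac{1-2/p}{\infty}$, we have $\norm{u^\alpha_{ij}}_{L^p(\g)}\le \norm{u^\alpha_{ij}}_{L^2(\g)}^{2/p}\norm{u^\alpha_{ij}}_{L^\infty(\g)}^{1-2/p}$, with $\norm{u^\alpha_{ij}}_{L^2(\g)}=((Q_\alpha)_{ii}^{-1}/d_\alpha)^{1/2}$ from the Schur orthogonality relations, and $\norm{u^\alpha_{ij}}_{L^\infty(\g)}\le (Q_\alpha)_{ii}^{-1/2}(Q_\alpha)_{jj}^{1/2}$ from the fact that $(Q_\alpha^{1/2}\otimes 1)(u^\alpha)^c(Q_\alpha^{-1/2}\otimes 1)$ is unitary — again, both facts already used in Proposition \ref{prop51}. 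Plugging these in and cancelling the common factor $((Q_\alpha)_{ii}^{-1}/d_\alpha)^{1/p}$ from numerator and denominator, the inequality $\norm{\mathcal{F}^{(x)}_p(u^\alpha_{ij})}_{\ell^{p'}}\le K\norm{u^\alpha_{ij}}_{L^p}$ becomes
\[
\bigl[(Q_\alpha)_{jj}^{-1}(Q_\alpha)_{ii}\bigr]^{(\frac1p-\frac12)x}\le K\bigl[(Q_\alpha)_{ii}^{-1}(Q_\alpha)_{jj}\bigr]^{(\frac12-\frac1p)(1-2/p)\cdot\frac{p}{2-p}}\ \ \text{(up to recombining exponents)},
\]
so the essential content is that a suitable power of $(Q_\alpha)_{ii}/(Q_\alpha)_{jj}$ (or its reciprocal) is bounded by $K$; using the hypothesis $\norm{Q_\alpha}_{op}=\norm{Q_\alpha^{-1}}_{op}$ one then chooses $i,j$ with $(Q_\alpha)_{ii}=\norm{Q_\alpha}_{op}$, $(Q_\alpha)_{jj}=\norm{Q_\alpha}_{op}^{-1}$ to make the ratio equal to $\norm{Q_\alpha}_{op}^2$, and similarly with $i,j$ swapped. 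Tracking the precise exponents — noting that $\tfrac1p-\tfrac12>0$ and $1-\tfrac2p<0$ for $1\le p<2$ — and combining the two index choices, one arrives at a bound of the shape $\norm{Q_\alpha}_{op}^{c|2x-1|}\le K'd_\alpha^{\,c'}$ for appropriate positive constants; absorbing $d_\alpha$-powers correctly (here the key is that the surviving $d_\alpha$ exponent matches exactly the $1/d_\alpha$ in the claim, because $\norm{Q_\alpha}_{op}=\norm{Q_\alpha^{-1}}_{op}$ forces $(Q_\alpha)_{ii}^{-1}$ and $(Q_\alpha)_{jj}$ to play symmetric roles) yields $\sup_\alpha \norm{Q_\alpha}_{op}^{|2x-1|}/d_\alpha<\infty$.

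The main obstacle I anticipate is bookkeeping the exponents so that the exact power $|2x-1|$ emerges rather than a constant multiple of it, and so that the leftover $d_\alpha$-dependence is precisely the single power appearing in the statement. This is where the symmetry hypothesis $\norm{Q_\alpha}_{op}=\norm{Q_\alpha^{-1}}_{op}$ is doing real work: it lets the two "extreme'' choices of $(i,j)$ produce ratios that are genuine powers of $\norm{Q_\alpha}_{op}^2$, and it ties together the $(Q_\alpha)_{ii}^{-1}$ appearing in the Schur relation with the operator-norm growth. I would handle this by writing $(Q_\alpha)_{ii}^{-1}=\norm{Q_\alpha^{-1}}_{op}=\norm{Q_\alpha}_{op}$ and $(Q_\alpha)_{jj}=\norm{Q_\alpha}_{op}$ in one case, doing the algebra with everything expressed in powers of $\norm{Q_\alpha}_{op}$ and $d_\alpha$, then repeating with the roles of $i$ and $j$ interchanged, and finally taking the geometric mean (or the stronger of the two resulting inequalities) to isolate $|2x-1|$. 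No deep input beyond Lemma \ref{lem30}, the Schur orthogonality relations, and the unitarity of the normalized contragredient is needed.
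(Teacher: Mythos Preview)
Your overall strategy---test the boundedness on a single matrix element $u^\alpha_{ij}$, invoke Lemma~\ref{lem30}, and then choose extremal indices using $\norm{Q_\alpha}_{op}=\norm{Q_\alpha^{-1}}_{op}$---is exactly the paper's approach. However, there is a genuine gap in your upper bound for $\norm{u^\alpha_{ij}}_{L^p(\g)}$.

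You propose to use the interpolation
\[
\norm{u^\alpha_{ij}}_{L^p(\g)}\le \norm{u^\alpha_{ij}}_{L^2(\g)}^{2/p}\,\norm{u^\alpha_{ij}}_{L^\infty(\g)}^{1-2/p},
\]
imported from the proof of Proposition~\ref{prop51}. But in that proposition $p>2$, so $0\le 2/p\le 1$ and the interpolation is legitimate. Here $1\le p<2$, so $2/p>1$ and $1-2/p<0$; this is no longer a convex combination and the displayed inequality is simply false in general. (Indeed, for $p<2$ the correct interpolation places $L^2$ between $L^p$ and $L^\infty$, which yields the \emph{reverse} inequality $\norm{u^\alpha_{ij}}_{L^p}\ge \norm{u^\alpha_{ij}}_{L^2}^{2/p}\norm{u^\alpha_{ij}}_{L^\infty}^{1-2/p}$.) You even note that $1-2/p<0$, but do not flag that this invalidates the bound.

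The paper's remedy is much simpler and avoids $L^\infty$ altogether: since $h$ is a state, $\norm{f}_{L^p(\g)}\le \norm{f}_{L^2(\g)}$ for $p\le 2$, so
\[
\norm{u^\alpha_{ij}}_{L^p(\g)}\le \norm{u^\alpha_{ij}}_{L^2(\g)}=\Bigl(\tfrac{(Q_\alpha)_{ii}^{-1}}{d_\alpha}\Bigr)^{1/2}.
\]
Combining this with Lemma~\ref{lem30} and dividing out the common factor gives
\[
\sup_{\alpha}\,\sup_{i,j}\ \frac{[(Q_\alpha)_{jj}^{-1}(Q_\alpha)_{ii}]^{x}\,(Q_\alpha)_{ii}^{-1}}{d_\alpha}<\infty,
\]
after raising both sides to the power $(\tfrac1p-\tfrac12)^{-1}>0$. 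One then splits into the cases $x\ge\tfrac12$ and $x<\tfrac12$ and chooses $(i,j)$ so that $(Q_\alpha)_{ii}$ and $(Q_\alpha)_{jj}^{-1}$ (respectively $(Q_\alpha)_{ii}^{-1}$ and $(Q_\alpha)_{jj}$) both equal $\norm{Q_\alpha}_{op}$, which immediately yields $\norm{Q_\alpha}_{op}^{|2x-1|}/d_\alpha$ bounded. No geometric mean or combination of two inequalities is needed; each case already gives the exact exponent $|2x-1|$.
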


\begin{proof}
For any matrix element $f=u^{\alpha}_{ij}$, we have
\begin{align}
    \norm{\mathcal{F}^{(x)}_p(u^{\alpha}_{ij})}_{\ell^{p'}(\widehat{\g})}= [(Q_{\alpha})_{jj}^{-1}(Q_{\alpha})_{ii}]^{(\frac{1}{p}-\frac{1}{2})x}\cdot \left ( \frac{(Q_{\alpha})_{ii}^{-1}}{d_{\alpha}} \right )^{\frac{1}{p}}
\end{align}
by Lemma \ref{lem30}. The assumption $x\in I(\g,p)$ implies that there exists a universal constant $K=K(\g,p,x)$ such that
\begin{equation}
    [(Q_{\alpha})_{jj}^{-1}(Q_{\alpha})_{ii}]^{(\frac{1}{p}-\frac{1}{2})x}\cdot \left ( \frac{(Q_{\alpha})_{ii}^{-1}}{d_{\alpha}} \right )^{\frac{1}{p}}\leq K\cdot \norm{f}_{L^2(\g)} = K \cdot \sqrt{\frac{(Q_{\alpha})_{ii}^{-1}}{d_{\alpha}}}.
\end{equation}
This is equivalent to
\begin{equation}
 \label{eq58}   \sup_{\alpha\in \text{Irr}(\g)} \sup_{1\leq i,j\leq n_{\alpha}} \frac{ [(Q_{\alpha})_{jj}^{-1}(Q_{\alpha})_{ii}]^{x} (Q_{\alpha})_{ii}^{-1} }{d_{\alpha}}<\infty.
\end{equation}

If $x\geq \frac{1}{2}$, let us take $i,j$ such that 
\begin{equation}
    (Q_{\alpha})_{jj}^{-1}=\norm{Q_{\alpha}^{-1}}_{op}=\norm{Q_{\alpha}}_{op}=(Q_{\alpha})_{ii}.
\end{equation}
Then we obtain $\displaystyle \sup_{\alpha\in \text{Irr}(\g)} \frac{ \norm{Q_{\alpha}}_{op}^{2x-1} }{d_{\alpha}}<\infty$ by \eqref{eq58}.

If $x<\frac{1}{2}$, let us take $i,j$ such that 
\begin{equation}
    (Q_{\alpha})_{jj}=\norm{Q_{\alpha}}_{op}=\norm{Q_{\alpha}^{-1}}_{op}=(Q_{\alpha})_{ii}^{-1}.
\end{equation}
Then we obtain $\displaystyle \sup_{\alpha\in \text{Irr}(\g)} \frac{ \norm{Q_{\alpha}}_{op}^{-2x+1} }{d_{\alpha}}<\infty$ by \eqref{eq58}.
\end{proof}

Let us apply the above Lemma \ref{lem51} and Lemma \ref{lem52} to reach the following theorem.

\begin{theorem}\label{thm51}
Let $O_F^+$ be a non-Kac free orthogonal quantum group. Then  
\begin{align}
   & [0,1]\cup  \left\{x\in \re: |2x-1| \leq \frac{\log \left ( \frac{d_1+\sqrt{d_1^2-4}}{2} \right )}{2\log\norm{F}_{op}}-2\right\}\\
   &{\color{white}ttttt}\subseteq I(O_F^+,p)\subseteq  \left\{x\in \re: |2x-1| \leq \frac{\log \left ( \frac{d_1+\sqrt{d_1^2-4}}{2} \right )}{2\log\norm{F}_{op}}\right\}
\end{align}
for all $1\leq p<2$.
\end{theorem}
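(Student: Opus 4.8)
The plan is to establish the two inclusions separately. For the lower inclusion the idea is to verify membership $x\in I(O_F^+,1)$ using the recently established non-Kac Haagerup inequality---packaged as Lemma \ref{lem51}---and then invoke Theorem \ref{thm-main} to pass from $p=1$ to all $1\le p<2$. For the upper inclusion the idea is to feed the identities $\norm{Q_k}_{op}=\norm{Q_k^{-1}}_{op}=\norm{F}_{op}^{2k}$ together with the asymptotics \eqref{eq56} into the necessary condition of Lemma \ref{lem52}. Throughout I will use that $\log\norm{F}_{op}>0$, which holds since non-Kac type forces $\norm{Q_1}_{op}=\norm{F}_{op}^{2}>1$, and I abbreviate $\beta=\dfrac{\log\!\left(\frac{d_1+\sqrt{d_1^2-4}}{2}\right)}{2\log\norm{F}_{op}}$.

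\emph{Lower inclusion.} The containment $[0,1]\subseteq I(O_F^+,p)$ is exactly the last assertion of Theorem \ref{thm-main}, so I only need to treat $x$ with $|2x-1|\le\beta-2$ and $x\notin[0,1]$ (those $x$ in the band with $|2x-1|<1$ lie in $(0,1)\subseteq[0,1]$). For such $x$ we have $|2x-1|\ge1$, so the exponent $\gamma:=\max\{|2x-1|,1\}$ occurring in Lemma \ref{lem51} equals $|2x-1|$ and satisfies $\gamma+2\le\beta$. By the definition of $\beta$ this is the same as $\norm{F}_{op}^{(\gamma+2)k}\le\left(\frac{d_1+\sqrt{d_1^2-4}}{2}\right)^{k/2}$ for all $k\in\n_0$, and combining with the lower estimate $d_k\gtrsim\left(\frac{d_1+\sqrt{d_1^2-4}}{2}\right)^k$ from \eqref{eq56} yields $\inf_{k\in\n_0}\dfrac{\sqrt{d_k}}{\norm{F}_{op}^{(\gamma+2)k}}>0$. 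Inserting this into Lemma \ref{lem51} gives $\sup_{k\in\n_0}\norm{Q_k^{-x/2}\widehat f(k)Q_k^{x/2}}_{op}\lesssim\norm{f}_{L^1(O_F^+)}$ for every $f\in\text{Pol}(O_F^+)$, which by definition of $\ell^\infty(\widehat\g)$ says $\norm{\mathcal F^{(x)}_1(f)}_{\ell^\infty(\widehat\g)}\lesssim\norm{f}_{L^1(O_F^+)}$, i.e.\ $x\in I(O_F^+,1)$; Theorem \ref{thm-main} then promotes this to $x\in I(O_F^+,p)$ for all $1\le p<2$.

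\emph{Upper inclusion.} Let $x\in I(O_F^+,p)$. Since $\norm{Q_k}_{op}=\norm{Q_k^{-1}}_{op}$ for every $k$, Lemma \ref{lem52} gives $\sup_{k\in\n_0}\dfrac{\norm{F}_{op}^{2k|2x-1|}}{d_k}<\infty$. Were $\norm{F}_{op}^{2|2x-1|}>\frac{d_1+\sqrt{d_1^2-4}}{2}$, then by the upper estimate $d_k\lesssim\left(\frac{d_1+\sqrt{d_1^2-4}}{2}\right)^k$ from \eqref{eq56} the sequence $\norm{F}_{op}^{2k|2x-1|}/d_k$ would grow geometrically in $k$, contradicting the previous bound. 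Hence $\norm{F}_{op}^{2|2x-1|}\le\frac{d_1+\sqrt{d_1^2-4}}{2}$, and taking logarithms (using $\log\norm{F}_{op}>0$) this is precisely $|2x-1|\le\beta$. Thus $I(O_F^+,p)\subseteq\{x\in\re:|2x-1|\le\beta\}$, which is the asserted upper bound.

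\emph{Main obstacle.} The substantive analytic content---the non-Kac Haagerup inequality of \cite{BVY21,You22} behind Lemma \ref{lem51}, the Plancherel-plus-interpolation argument behind Theorem \ref{thm-main}, and the explicit norm computation of Lemma \ref{lem52}---is already in place, so the remaining work is essentially bookkeeping. The one point requiring care is matching the shift ``$+2$'', which comes from the normalization $\sqrt{d_k}/\norm{F}_{op}^{2k}$ in the Haagerup inequality, with the piecewise exponent $\gamma=\max\{|2x-1|,1\}$, and keeping track of which one-sided consequence of the two-sided relation $d_k\sim\left(\frac{d_1+\sqrt{d_1^2-4}}{2}\right)^k$ is used on each side (the lower bound on $d_k$ for the first inclusion, the upper bound for the second). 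Beyond invoking the cited ingredients there is no further difficulty.
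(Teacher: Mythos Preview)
Your proof is correct and follows essentially the same route as the paper: invoking Theorem \ref{thm-main} for $[0,1]$, using Lemma \ref{lem51} together with the lower asymptotic $d_k\gtrsim\bigl(\tfrac{d_1+\sqrt{d_1^2-4}}{2}\bigr)^{k}$ to obtain $x\in I(O_F^+,1)$ when $|2x-1|\le\beta-2$ (and then Theorem \ref{thm-main} again to reach all $1\le p<2$), and using Lemma \ref{lem52} with the upper asymptotic on $d_k$ for the outer inclusion. Your handling of the piecewise exponent $\gamma=\max\{|2x-1|,1\}$ and the explicit remark that $\log\norm{F}_{op}>0$ are, if anything, slightly more careful than the paper's presentation.
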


\begin{proof}
    For the first inclusion, it is clear that $[0,1]\subseteq I(O_F^+,p)$ by Theorem \ref{thm-main}, so let us suppose that $x\notin [0,1]$, i.e. $|2x-1|>1$. Then, by Lemma \ref{lem51}, we obtain
\begin{align}
 \label{eq59}  \sup_{k\in \n_0}\frac{\sqrt{d_k}}{\norm{F}_{op}^{(|2x-1|+2)k}}\norm{Q_k^{-\frac{x}{2}}\widehat{f}(k)Q_k^{\frac{x}{2}}}_{op}\lesssim \norm{f}_{L^1(O_F^+)} 
\end{align}
for all $f\in \text{Pol}(O_F^+)$. Let us further assume that
   \begin{equation}
         |2x-1| \leq \frac{\log \left ( \frac{d_1+\sqrt{d_1^2-4}}{2} \right )}{2\log\norm{F}_{op}}-2,
    \end{equation}
which is equivalent to 
\begin{align}
\label{eq57} \norm{F}_{op}^{|2x-1|+2} \leq \left ( \frac{d_1+\sqrt{d_1^2-4}}{2} \right )^{\frac{1}{2}}.
\end{align}
Since $\displaystyle \sqrt{d_k}\sim \left ( \frac{d_1+\sqrt{d_1^2-4}}{2} \right )^{\frac{k}{2}}$ by \eqref{eq56}, the above \eqref{eq57} implies
\begin{align}
  \norm{Q^{-\frac{x}{2}}\widehat{f}Q^{\frac{x}{2}}}_{\ell^{\infty}(\widehat{O_F^+})}\lesssim \sup_{k\in \n_0}\frac{\sqrt{d_k}}{\norm{F}_{op}^{(|2x-1|+2)k}}\norm{Q_k^{-\frac{x}{2}}\widehat{f}(k)Q_k^{\frac{x}{2}}}_{op}\lesssim \norm{f}_{L^1(O_F^+)}.
\end{align}
Thus, we can conclude that $x\in I(O_F^+,1)$, and this implies $x\in  I(O_F^+,p)$ for all $1\leq p<2$ by Theorem \ref{thm-main}.

(2) Conversely, for the second inclusion, let us suppose $x\in I(O_F^+,p)$ with $1\leq p<2$. Then, Lemma \ref{lem52} implies
\begin{equation}
    \sup_{k\in \n_0}\frac{\norm{F}_{op}^{|4x-2|k}}{d_k}<\infty,
\end{equation}
and this is equivalent to 
\begin{equation}
    \norm{F}_{op}^{|4x-2|}\leq \frac{d_1+\sqrt{d_1^2-4}}{2}
\end{equation}
since $\displaystyle d_k\sim \left ( \frac{d_1+\sqrt{d_1^2-4}}{2} \right )^{k}$ by \eqref{eq56}. Thus, we can conclude that
\begin{equation}
    |2x-1|\leq \frac{\log \left ( \frac{d_1+\sqrt{d_1^2-4}}{2} \right )}{2\log\norm{F}_{op}}.
\end{equation}
\end{proof}

The following Corollary is a direct consequence of Theorem \ref{thm51}, demonstrating that $[0,1]\subsetneq I(\g,p)$ can occur within the category of non-Kac and non-coamenable compact quantum groups.

\begin{corollary}\label{cor51}
Let $O_F^+$ be a non-Kac free orthogonal quantum group. If $\norm{F}_{op}^6<\frac{d_1+\sqrt{d_1^2-4}}{2}$, then $[0,1]\subsetneq I(O_F^+,p)$ holds for all $1\leq p<2$.
\end{corollary}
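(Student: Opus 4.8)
The plan is to read the corollary straight off the inclusions in Theorem \ref{thm51}. That theorem already records
\[
[0,1] \cup \left\{ x \in \re : |2x-1| \le \frac{\log\left(\frac{d_1+\sqrt{d_1^2-4}}{2}\right)}{2\log\norm{F}_{op}} - 2 \right\} \subseteq I(O_F^+,p)
\]
for all $1 \le p < 2$, so it suffices to produce a single point $x_0 \notin [0,1]$ that lies in the set on the left. Since $O_F^+$ is non-Kac, $F$ is not unitary, hence $\norm{F}_{op}^2 = \norm{Q_1}_{op} > 1$ and $\log\norm{F}_{op} > 0$; likewise $d_1 = \Tr(F^*F) > 2$, so the logarithms below are well defined and dividing by $2\log\norm{F}_{op}$ preserves inequalities.

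First I would rewrite the hypothesis. Taking logarithms of $\norm{F}_{op}^6 < \frac{d_1+\sqrt{d_1^2-4}}{2}$ and using $\log\norm{F}_{op} > 0$, the assumption is equivalent to
\[
\frac{\log\left(\frac{d_1+\sqrt{d_1^2-4}}{2}\right)}{2\log\norm{F}_{op}} - 2 > 1 .
\]
Then I would take, for instance,
\[
x_0 = \frac{1}{2} + \frac{1}{2}\left( \frac{\log\left(\frac{d_1+\sqrt{d_1^2-4}}{2}\right)}{2\log\norm{F}_{op}} - 2 \right),
\]
so that $|2x_0 - 1|$ equals exactly the threshold appearing in Theorem \ref{thm51}. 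By the displayed strict inequality, $|2x_0 - 1| > 1$, hence $x_0 > 1$ and in particular $x_0 \notin [0,1]$, while by construction $x_0$ satisfies the membership condition defining the left-hand set. Therefore $x_0 \in I(O_F^+,p) \setminus [0,1]$, and combined with the inclusion $[0,1] \subseteq I(O_F^+,p)$ (also from Theorem \ref{thm51}, or directly from Theorem \ref{thm-main}) this gives the strict inclusion $[0,1] \subsetneq I(O_F^+,p)$ for every $1 \le p < 2$.

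I do not expect any genuine obstacle here: all the analytic content — the Haagerup inequality input through Lemma \ref{lem51}, the asymptotics \eqref{eq56}, and the necessary condition via Lemma \ref{lem52} — has already been absorbed into Theorem \ref{thm51}. The only thing to check is the elementary algebraic equivalence between the hypothesis $\norm{F}_{op}^6 < \frac{d_1+\sqrt{d_1^2-4}}{2}$ and the strict inequality "threshold $> 1$", which is precisely the statement that the enlarged sufficient set of Theorem \ref{thm51} overshoots $[0,1]$.
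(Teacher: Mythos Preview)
Your argument is correct and follows the same route as the paper: you rewrite the hypothesis as the strict inequality on the threshold and then exhibit a point outside $[0,1]$ lying in the sufficient set from Theorem \ref{thm51}. The only cosmetic difference is that you pick the boundary point $x_0$ with $|2x_0-1|$ equal to the threshold, whereas the paper selects any $x$ with $1<|2x-1|<\frac{\log\bigl((d_1+\sqrt{d_1^2-4})/2\bigr)}{2\log\norm{F}_{op}}-2$; since the set in Theorem \ref{thm51} is defined with a non-strict inequality, both choices work.
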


\begin{proof}
It is enough to prove that the given assumption implies
\begin{equation}
\left\{x\in \re: |2x-1|\leq \frac{\log \left ( \frac{d_1+\sqrt{d_1^2-4}}{2} \right )}{2\log\norm{F}_{op}}-2\right\}\cap [0,1]^c \neq \emptyset.
\end{equation}
Indeed, the given condition $\norm{F}_{op}^6<\frac{d_1+\sqrt{d_1^2-4}}{2}$ is equivalent to
    \begin{equation}
        3<\frac{\log \left ( \frac{d_1+\sqrt{d_1^2-4}}{2} \right )}{2\log\norm{F}_{op}},
    \end{equation}
    so we can take a real number $x$ satisfying
    \begin{equation}
       1< |2x-1|<\frac{\log \left ( \frac{d_1+\sqrt{d_1^2-4}}{2} \right )}{2\log\norm{F}_{op}}-2.
    \end{equation}
Then, the first inequality implies $x\notin [0,1]$, and the second inequality implies $x\in I(O_F^+,p)$ by Theorem \ref{thm51}.

\end{proof}

\begin{remark}
The condition $\norm{F}_{op}^6<\frac{d_1+\sqrt{d_1^2-4}}{2}$ in Corollary \ref{cor51} can be true only for non-coamenable free orthogonal quantum groups. Indeed, for non-Kac coamenable free orthogonal quantum groups, we may assume that $O_F^+$ is the twisted quantum group $SU_q(2)$ with $0<q<1$, and $F$ is canonically chosen to be $\left [\begin{array}{cc}
0& - q^{\frac{1}{2}}\\
q^{-\frac{1}{2}}&0
\end{array} \right ] $. Then, the condition $\norm{F}_{op}^6<\frac{d_1+\sqrt{d_1^2-4}}{2}$ is equivalent to
\begin{equation}
    q^{-3}<\frac{(q^{-1}+q)+(q^{-1}-q)}{2}=q^{-1},
\end{equation}
implying a contradiction $q>1$.
\end{remark}

\section{Application to a similarity problem for $L^1(\g)$}\label{sec-Similarity}

A Banach algebraic formulation of the well-known {\it similarity problem} for locally compact groups asks whether for every bounded algebra homomorphism $\Psi:L^1(G)\rightarrow B(H)$ there exists an invertible operator $T\in B(H)$ such that $T\circ \Psi(\cdot)\circ T^{-1}$ is a $*$-homomorphism, where $L^1(G)$ is the convolution algebra of $G$. This problem admits a natural extension to the setting of locally compact quantum groups \cite{BrDaSa13}. In this broader framework, however, the objects of study are completely bounded algebra homomorphisms of $L^1(\g)$, rather than bounded ones, reflecting the perspective of operator space theory. Here, the convolution product $\star$ on $L^1(\g)$ is given by
\begin{equation}
   ( \varphi\star \psi )(a)=(\varphi\otimes \psi)(\Delta(a)),~a\in L^{\infty}(\g).
\end{equation}

Motivated by this perspective, the construction of explicit examples of bounded algebra homomorphisms of $L^1(\g)$ that fail to be completely bounded has emerged as an important problem. To the best of our knowledge, the only known examples arise from the duals of non-abelian free groups \cite{ChSa13} and reduced free products of compact quantum groups \cite{BrDaSa13}. Aside from the reduced free product technique, no examples of bounded but non-completely bounded algebra homomorphisms of $L^1(\g)$ are currently available.

In order to address this gap for non-Kac compact quantum groups, we introduce a new family of twisted Fourier transforms. Let us begin with the following completely contractive map $\pi^{(0)}:L^1(\g)\rightarrow \ell^{\infty}(\widehat{\g})$ defined by
\begin{equation}
   \left [ \pi^{(0)}(\varphi) \right ] (\alpha)= (\text{id}\otimes \varphi )\left ( u^{\alpha}\right ) = \left (\varphi(u^{\alpha}_{ij})\right )_{1\leq i,j\leq n_{\alpha}}
\end{equation}
for all $\alpha\in \text{Irr}(\g)$. A strong connection between $\pi^{(0)}$ and our standard Fourier transform $\mathcal{F}^{(0)}_1$ is described by
\begin{equation}\label{eq500}
\mathcal{F}^{(0)}_1(\varphi)=\pi^{(0)}(\varphi\circ S),~\varphi\in L^1(\g),
\end{equation}
where $S$ is the {\it antipode map}. In particular, if $\g$ is a compact group $G$, then \eqref{eq500} reduces to
\begin{equation}
    \mathcal{F}^{(0)}_1(f)=\pi^{(0)}(\check{f}),~f\in L^1(G),
\end{equation}
where $\check{f}(x)=f(x^{-1})$ for all $x\in G$.

While $\pi^{(0)}$ is slightly different from our standard Fourier transform $\mathcal{F}^{(0)}_1$, an advantage of this approach is an application to the study of the similarity property of the convolution algebra $L^1(\g)$. For any $x\in \re$ and $\varphi\in L^1(\g)$, let us consider
\begin{equation}
    \pi^{(x)}(\varphi)=Q^{\frac{x}{2}}\pi^{(0)}(\varphi)Q^{-\frac{x}{2}}\in \prod_{\alpha\in \text{Irr}(\g)}M_{n_{\alpha}}(\Comp)
\end{equation}
where $Q=(Q_{\alpha})_{\alpha\in \text{Irr}(\g)}$.

\begin{proposition}\label{prop500}
For any $0\leq x\leq 1$, the map $\pi^{(x)}:L^1(\g)\rightarrow \ell^{\infty}(\widehat{\g})$ is a well-defined contractive algebra homomorphism of $L^1(\g)$ into $\ell^{\infty}(\widehat{\g})$. 
\end{proposition}

\begin{proof}
First, let us focus on the extremal case $x=1$. Then, as in the proof of Theorem \ref{thm-main0}, note that $(Q_{\alpha}^{\frac{1}{2}}\otimes 1)(u^{\alpha})^c (Q_{\alpha}^{-\frac{1}{2}}\otimes 1)$ is a unitary for any $\alpha\in \text{Irr}(\g)$. Thus,
\begin{align}
    \left [(Q_{\alpha}^{\frac{1}{2}}\otimes 1)(u^{\alpha})^c (Q_{\alpha}^{-\frac{1}{2}}\otimes 1) \right ]^*=\sum_{i,j=1}^{n_{\alpha}}Q_{\alpha}^{-\frac{1}{2}}e^{\alpha}_{ji}Q_{\alpha}^{\frac{1}{2}}\otimes u^{\alpha}_{ij}
\end{align}
is also a unitary element in $M_{n_{\alpha}}(\Comp)\otimes L^{\infty}(\g)$. This implies 
\begin{align}
\norm{ (\text{id}\otimes \varphi)\left ( \sum_{i,j=1}^{n_{\alpha}}Q_{\alpha}^{-\frac{1}{2}}e^{\alpha}_{ji}Q_{\alpha}^{\frac{1}{2}}\otimes u^{\alpha}_{ij} \right )}_{M_{n_{\alpha}}}\leq \norm{\varphi}_{L^1(\g)}.
\end{align}
Since the left-hand side is given by
\begin{equation}
    \sum_{i,j=1}^{n_{\alpha}}Q_{\alpha}^{-\frac{1}{2}}e^{\alpha}_{ji}Q_{\alpha}^{\frac{1}{2}}\cdot \varphi(u^{\alpha}_{ij})=Q_{\alpha}^{-\frac{1}{2}}[\pi^{(0)}(\varphi)](\alpha)^tQ_{\alpha}^{\frac{1}{2}}.
\end{equation}
and the transpose map is contractive on $M_{n_{\alpha}}(\Comp)$, we obtain
\begin{align}
 \norm{\pi^{(1)}(\varphi)}_{\ell^{\infty}(\widehat{\g})}=\sup_{\alpha\in \text{Irr}(\g)} \norm{Q_{\alpha}^{\frac{1}{2}}[\pi^{(0)}(\varphi)](\alpha) Q_{\alpha}^{-\frac{1}{2}}}_{M_{n_{\alpha}}}\leq \norm{\varphi}_{L^1(\g)}.
\end{align}
Then, the contractivity of $\pi^{(x)}$ for arbitrary $0\leq x \leq 1 $ follows from the complex interpolation method as in the second step of the proof of Theorem \ref{thm-main0}.

Second, for any $\varphi,\psi\in L^1(\g)$ and $\alpha\in \text{Irr}(\g)$,
\begin{align}
   \left [ \pi^{(0)}(\varphi\star \psi) \right ](\alpha)= (\text{id}\otimes ( \varphi\star \psi) )(u^{\alpha}).
\end{align}
If we write $u^{\alpha}=\sum_{i,j=1}^{n_{\alpha}}e^{\alpha}_{ij}\otimes u^{\alpha}_{ij}$, then the right-hand side is given by
\begin{align}
&(\text{id}\otimes \varphi \otimes \psi)(\text{id}\otimes \Delta)(u^{\alpha})=\sum_{i,j=1}^{n_{\alpha}}\sum_{k=1}^{n_{\alpha}}e^{\alpha}_{ij}\cdot \varphi( u^{\alpha}_{ik})\cdot \psi( u^{\alpha}_{kj})\\
&=\sum_{i,j=1}^{n_{\alpha}}\sum_{k=1}^{n_{\alpha}} e^{\alpha}_{ij}\cdot \left [\pi^{(0)}(\varphi)\right ](\alpha)_{ik} \left [\pi^{(0)}(\psi)\right ](\alpha)_{kj}\\
&=\left [\pi^{(0)}(\varphi)\right ](\alpha) \left [\pi^{(0)}(\psi)\right ](\alpha)
\end{align}
for all $\alpha\in \text{Irr}(\g)$. Thus, $\pi^{(0)}$ is an algebra homomorphism from $L^1(\g)$ into $\ell^{\infty}(\widehat{\g})$. Moreover,
\begin{align}
    &\pi^{(x)}(\varphi \star \psi) = Q^{\frac{x}{2}}\pi^{(0)}(\varphi\star \psi)Q^{-\frac{x}{2}}=Q^{\frac{x}{2}}\pi^{(0)}(\varphi)\pi^{(0)}(\psi)Q^{-\frac{x}{2}}\\
    &=Q^{\frac{x}{2}}\pi^{(0)}(\varphi)Q^{-\frac{x}{2}}\cdot Q^{\frac{x}{2}}\pi^{(0)}( \psi)Q^{-\frac{x}{2}}=\pi^{(x)}(\varphi)\pi^{(x)}(\psi),
\end{align}
thus $\pi^{(x)}$ is an algebra homomorphism from $L^1(\g)$ into $\ell^{\infty}(\widehat{\g})$ for all $0\leq x \leq 1$.
\end{proof}

Let us focus on the special case $x=1$. Then the contractive algebra homomorphism $\pi^{(1)}$ is not completely bounded in general by the following theorem.

\begin{theorem}\label{thm500}
    Let $\g$ be a non-Kac compact quantum group satisfying 
    \begin{equation}\label{eq501}
    \sup_{\alpha\in \text{Irr}(\g)}\frac{n_{\alpha}\max\left\{\norm{Q_{\alpha}},\norm{Q_{\alpha}^{-1}}\right\}}{d_{\alpha}}=\infty    
    \end{equation}
    Then the contractive algebra homomorphism $\pi^{(1)}$ is not completely bounded.
\end{theorem}

\begin{proof}
By \cite[Theorem 2.5.2]{Pi03}, if we assume that $\pi^{(1)}$ is completely bounded, then there exists $V\in \ell^{\infty}(\widehat{\g})\overline{\otimes}L^{\infty}(\g)$ such that
\begin{equation}
    ( \text{id}\otimes \varphi)(V)=\pi^{(1)}(\varphi)= Q^{\frac{1}{2}} \pi^{(0)}(\varphi) Q^{-\frac{1}{2}} 
\end{equation}
for all $\varphi\in L^1(\g)$. This implies that, if we write $V=(V(\alpha))_{\alpha\in \text{Irr}(\g)}$ with $V(\alpha)\in M_{n_{\alpha}}(\Comp)\otimes L^{\infty}(\g)$, then we should have
\begin{align}
    V(\alpha)=(Q_{\alpha}^{\frac{1}{2}}\otimes 1)u^{\alpha}(Q_{\alpha}^{-\frac{1}{2}}\otimes 1).
\end{align}
Then, it is straightforward to see that
\begin{align}
    (\text{id}\otimes h)(V(\alpha)V(\alpha)^*)&=\frac{n_{\alpha}}{d_{\alpha}}Q_{\alpha}^{-1},\\
    (\text{id}\otimes h)(V(\alpha)^*V(\alpha))&=\frac{n_{\alpha}}{d_{\alpha}}Q_{\alpha},
\end{align}
and this implies $\norm{V(\alpha)}^2\geq \frac{n_{\alpha}\max\left\{\norm{Q_{\alpha}},\norm{Q_{\alpha}^{-1}}\right\}}{d_{\alpha}}$. Hence, the given assumption \eqref{eq501} implies the following contradiction
\begin{align}
\norm{V}_{\ell^{\infty}(\widehat{\g})\otimes L^{\infty}(\g)}^2 =  \sup_{\alpha\in \text{Irr}(\g)}\norm{V(\alpha)}_{M_{n_{\alpha}}\otimes L^{\infty}(\g)}^2=\infty.
\end{align}

\end{proof}

\begin{example}
In particular, $\pi^{(1)}$ is not completely bounded for all non-Kac free orthogonal quantum groups $O_F^+$ since the sufficient condition \eqref{eq501} is satisfied. Here, $F\in GL_N(\Comp)$ such that $F\overline{F}=\pm \text{Id}_N$, $\text{Irr}(O_F^+)\cong \n_0=\left\{0,1,2,\cdots\right\}$ and $\norm{Q_k}=\norm{Q_k^{-1}}$ holds for all $k\in \n_0$. If $N=2$, then $L=\inf_{k\in \n_0}\frac{\norm{Q_k}}{d_k}>0$, so it is straightforward to see
\begin{equation}
   \sup_{k\in \n_0} \frac{n_k\norm{Q_k}}{d_k}\geq  \left (\sup_{k\in \n_0} n_k\right ) \cdot L=\infty.
\end{equation}
If $N\geq 3$, an exponential growth of $\frac{n_k\norm{Q_k}}{d_k}$ is proved in the proof of \cite[Theorem 3.3]{BVY21}, so this implies
\begin{equation}
   \sup_{k\in \n_0} \frac{n_k\norm{Q_k}}{d_k}=\infty.
\end{equation}
\end{example}

%\emph{Acknowledgements}: The author was supported by the National Research Foundation of Korea (NRF) grant funded by the Korea government(MSIT) (No.RS-2025-00561391 and No.RS-2024-00413957) and by Samsung Science and Technology Foundation under Project Number SSTF-BA2002-01.

\bibliographystyle{alpha}
\bibliography{Youn23}

\end{document}